\pgfplotsset{compat = newest}
\tikzstyle{blue0} = [rectangle, rounded corners, minimum width=2.2cm, minimum height=1.4cm,text centered, draw=black, fill=CornflowerBlue!10, text width=3.2cm]
\tikzstyle{blue} = [rectangle, rounded corners, minimum width=3.6cm, minimum height=1.4cm,text centered, draw=black, fill=CornflowerBlue!10, text width=5.5cm]
\tikzstyle{bluea} = [rectangle, rounded corners, minimum width=2cm, minimum height=1.4cm,text centered, draw=black, fill=CornflowerBlue!10, text width=2.5cm]
\tikzstyle{blueb} = [rectangle, rounded corners, minimum width=3.5cm, minimum height=1.4cm,text centered, draw=black, fill=CornflowerBlue!10, text width=4cm]
\tikzstyle{bluec} = [rectangle, rounded corners, minimum width=5.6cm, minimum height=1.4cm,text centered, draw=black, fill=CornflowerBlue!10, text width=5.8cm]
\tikzstyle{arrow} = [thick,->]
\crefname{equation}{}{}
\numberwithin{equation}{section}
\theoremstyle{plain}
\newtheorem{thrm}{Theorem}[section]
\newtheorem{lmm}[thrm]{Lemma}
\theoremstyle{definition}
\newtheorem{dfntn}[thrm]{Definition}
\newtheorem{xmpl}[thrm]{Example}
\newtheorem{rmrk}[thrm]{Remark}
\theoremstyle{plain}
\newcommand*{\toccontents}{\@starttoc{toc}}
\newcommand{\xvec}[1]{\bm{#1}}
\newcommand{\xsym}[1]{\bm{#1}}
\newcommand{\xdop}[1]{\bm{\mathrm{#1}}}
\def\xnab{\xdop{\nabla}}
\newcommand{\xwcurl}[1]{\xdop{\nabla}\wedge{#1}}
\newcommand{\xdiv}[1]{\xdop{\nabla}\cdot{#1}}
\newcommand{\xdx}[1]{{{\rm d}#1}}
\def\xdrv#1#2{\frac{{\rm d}#1}{{\rm d}#2}}%
\def\cf{\emph{cf.\/}}\def\eg{\emph{e.g.\/}}
\def\etal{\emph{et al.\/}}
\def\@Rref#1{\hbox{\rm \ref{#1}}}
\def\Rref#1{\@Rref{#1}}
\def\xCzero{{\rm C}^{0}}
\def\xCone{{\rm C}^{1}} 
\def\xCinfty{{\rm C}^{\infty}} 
\def\xH{{\rm H}}
\def\xHn#1{{\rm H}^#1}
\def\bVn#1{\mathbf{V}^#1}
\def\xWn#1{{\rm W}^#1}
\def\xLtwo{{\rm L}^{2}}
\def\xU{{\rm U}}
\def\xS{{\rm S}}
\def\xB{{\rm B}}
\def\xR{{\rm R}}
\def\xN{{\rm N}}
\def\xJ{{\rm J}}
\def\xD{{\rm D}}
\begin{document}\linespread{1.05}\selectfont
	\date{}
	
	\author{Vahagn~Nersesyan\,\protect\footnote{NYU-ECNU Institute of Mathematical Sciences at NYU Shanghai, 3663 Zhongshan Road North, Shanghai, 200062, China, e-mail: \href{mailto:vahagn.nersesyan@nyu.edu}{Vahagn.Nersesyan@nyu.edu}}\and
		Manuel~Rissel\,\protect\footnote{NYU-ECNU Institute of Mathematical Sciences at NYU Shanghai, 3663 Zhongshan Road North, Shanghai, 200062, China, e-mail: \href{mailto:Manuel.Rissel@nyu.edu}{Manuel.Rissel@nyu.edu}}}
	
	\title{Localized and degenerate controls for the incompressible Navier--Stokes system}

	\maketitle
	
	\begin{abstract}
		We consider the global approximate controllability of the two-dimensional incompressible Navier--Stokes system driven by a physically localized and degenerate force. In other words, the fluid is regulated via four scalar controls that depend only on time and appear as coefficients in an  effectively constructed driving force supported in a given subdomain. Our idea consists of squeezing low mode controls into a small region, essentially by tracking their actions along the characteristic curves of a linearized vorticity equation. In this way, through explicit constructions and by connecting Coron's return method with recent concepts from geometric control, the original problem for the nonlinear Navier--Stokes system is reduced to one for a linear transport equation steered by a global force. This article can be viewed as an attempt to tackle a well-known open problem due to Agrachev.
	\end{abstract}
	
	\vspace{10pt}\noindent\textbf{Keywords:} approximate controllability, degenerate controls, localized controls, Navier--Stokes equations, return method, transported Fourier modes
	\newline
	\noindent{\bf MSC2020:}  35Q30, 35Q49, 76B75, 93B05, 93B07, 93B18 

	\setcounter{tocdepth}{1}
	\toccontents \vspace{4pt}
	\normalsize
	\newgeometry{margin=1.35in}
	
	\section{Introduction}\label{section:introduction}
	The study of fluid mechanics has ever since been intertwined with aspirations to not only observe, but also to manipulate or regulate flows of liquids. Modern control theory, to this end, offers various notions for the formalization of such problems. The question of controllability, in particular, is to ask whether and in which sense external influences (the controls) can cause a system to transition between designated states. From a mathematical stance, and due to the nonlinear nature of the involved constituents, several types of difficulties may arise. These are often related to the size of the prescribed data, how the controls are allowed to enter the system, and the overall degree of degeneracy in the formulated controllability problem. In this sense, the present article sets out to accommodate three requirements. First, the admissible states might be located far away from each other. Second, the fluid should be acted upon merely in a subdomain of small area. Third, it is desired that the driving force can be expressed through explicit formulas in terms of a fixed number of unknown control parameters.

	We establish the global approximate controllability for incompressible viscous fluids on the torus $\mathbb{T}^2 = \mathbb{R}^2/ 2\pi\mathbb{Z}^2$ by means of a degenerate force which is physically localized in a given subdomain $\Omega \subset \mathbb{T}^2$. The keyword \enquote{global} refers to the admissible distance between initial and target profiles being unlimited, the property \enquote{degenerate} emphasizes that there are only few degrees of freedom, while \enquote{localized} means here that the control region~$\Omega$ can be any subdomain containing two cuts $\mathcal{C}_1, \mathcal{C}_2 \subset \Omega$ rendering~$\mathbb{T}^2\setminus (\mathcal{C}_1 \cup \mathcal{C}_2)$ simply-connected (\cf~\Cref{Figure:twocontroldomains}). More precisely, the force employed as a control is supported in the control region~$\Omega$ and explicitly depends, through a formula, on four unknown control parameters. Aside from these four parameters, which are functions only of time, and a scaling constant, the external force shall be fixed independently of the prescribed data. 
	Our efforts contrast the common approach of searching localized interior controls in the form of purely abstract elements of infinite-dimensional function spaces.

	\begin{figure}[ht!]
		\centering
		\resizebox{0.92\textwidth}{!}{
			\begin{subfigure}[b]{0.5\textwidth}
				\centering
				\resizebox{1\textwidth}{!}{
					\begin{tikzpicture}
						\clip(-0.05,-0.05) rectangle (9.05,9.05);

						\fill[line width=0.3mm, fill=CornflowerBlue!80] plot[smooth cycle] (3.4,0) rectangle ++(0.2,9);
						\fill[line width=0.3mm, fill=CornflowerBlue!80] plot[smooth cycle] (0,3.2) rectangle ++(9,0.6);
						
						
						\draw[line width=0.2mm, color=black, dashed, dash pattern=on 8pt off 8pt] plot[smooth cycle] (0,0) rectangle ++(9,9);
						
					\end{tikzpicture}
				}
				\label{Figure:controldomain1}
			\end{subfigure}
			\hfill \quad \hfill
			\begin{subfigure}[b]{0.5\textwidth}
				\centering
				\resizebox{1\textwidth}{!}{
					\begin{tikzpicture}
						\clip(-0.05,-0.05) rectangle (9.05,9.05);
						
						\draw[line width=1.2mm, color=CornflowerBlue!80] plot [smooth] coordinates {(1,0) (1,0.5) (1, 1.5) (1.5, 2) (5,1) (6,3) (6.5,3.5) (7,4) (8,5)  (7,6)  (5,8) (2,4.5) (1,5.5) (1,7) (1,9)};
						
						\draw[line width=3.5mm, color=CornflowerBlue!80] plot [smooth] coordinates {(0,3.2) (1,3.2) (2, 3.2) (4, 4) (5, 4) (7,3.2) (8,3.2) (9,3.2)};
						
						
						\draw[line width=0.2mm, color=black, dashed, dash pattern=on 8pt off 8pt]	plot[smooth cycle] (0,0) rectangle ++(9,9);
						
					\end{tikzpicture}
				}
				\label{Figure:controldomain2}
			\end{subfigure}
		}
		\caption{Two of the various possibilities for localizing the controls. In each picture, the (blue) filled part illustrates a valid control region $\Omega \subset \mathbb{T}^2$, which can be taken of arbitrary nonzero area.}
		\label{Figure:twocontroldomains}
	\end{figure}
	
	\subsection{The controllability problem} \label{subsection:controllabilityproblem}
	Let a viscous Newtonian fluid of velocity $\xvec{u}\colon\mathbb{T}^2\times(0,T_{\operatorname{ctrl}}) \longrightarrow \mathbb{R}^2$ and exerted pressure $p \colon\mathbb{T}^2\times(0,T_{\operatorname{ctrl}}) \longrightarrow \mathbb{R}$ be governed by the two-dimensional incompressible Navier--Stokes system
	\begin{equation}\label{equation:NavierStokesVelocity}
		\partial_t \xvec{u} - \nu \Delta \xvec{u} + \left(\xvec{u} \cdot \xnab\right) \xvec{u} + \xnab p = \xvec{f} + \mathbb{I}_{\Omega} \xsym{\xi}, \quad \xdiv{\xvec{u}} = 0, \quad 
		\xvec{u}(\cdot, 0) = \xvec{u}_0,
	\end{equation}
	where $\nu > 0$ quantifies the viscosity of the fluid at hand, $\xvec{u}_0\colon\mathbb{T}^2 \longrightarrow \mathbb{R}^2$ stands for~the initial velocity, $\xvec{f}\colon\mathbb{T}^2\times(0,T_{\operatorname{ctrl}}) \longrightarrow \mathbb{R}^2$ is a known external force,~$\mathbb{I}_{\Omega}$~denotes the indicator function of the subset $\Omega$,
	and $\xsym{\xi}\colon\mathbb{T}^2\times(0,T_{\operatorname{ctrl}}) \longrightarrow \mathbb{R}^2$ represents a control sought to approximately steer the fluid in any fixed time $T_{\operatorname{ctrl}} > 0$ towards any prescribed admissible target state $\xvec{u}_1\colon\mathbb{T}^2 \longrightarrow \mathbb{R}^2$. The system \eqref{equation:NavierStokesVelocity} is said to be globally approximately controllable if for any given states~$\xvec{u}_0$ and $\xvec{u}_1$ belonging to a certain space with norm $\|\cdot\|$ and arbitrarily chosen accuracy parameter $\varepsilon > 0$, there exists a control $\xsym{\xi}$ such that
	the corresponding solution to~\eqref{equation:NavierStokesVelocity} satisfies
	\[
		\| \xvec{u}(\cdot,T_{\operatorname{ctrl}}) - \xvec{u}_1 \| < \varepsilon.
	\] 
	
	In view of applications where only a limited number of control actions can be realized, and where explicit representations of the controls are required, it would be desirable to achieve the global approximate controllability of \eqref{equation:NavierStokesVelocity} by means of finite-dimensional controls
	\begin{equation}\label{equation:truefinitedimensionalcontrol}
		\xsym{\xi}(\xvec{x},t) = \alpha_1(t) \xsym{\psi}_1(\xvec{x}) + \dots +\alpha_N(t) \xsym{\psi}_N(\xvec{x}),
	\end{equation}
	where $\xsym{\psi}_1,\dots,\xsym{\psi}_N\colon\mathbb{T}^2\longrightarrow \mathbb{R}^2$ are linearly independent vector fields unrelated to the prescribed data and the viscosity. This task amounts to identifying a fixed number of parameters $\alpha_1(t), \dots, \alpha_N(t)$. When $\Omega = \mathbb{T}^2$, the celebrated Agrachev--Sarychev approach \cite{AgrachevSarychev2005} and its various advancements allow solving problems of this type. In reality, however, designing controls to act in the whole domain is difficult to justify. Therefore, the spotlight is put on the challenging case $\Omega \neq \mathbb{T}^2$. In fact, the construction of physically localized and finite-dimensional controls constitutes a widely open problem described by Agrachev in \cite[Section 7]{Agrachev2014}.

	The approach proposed in the present article can be viewed as a step towards this open problem in the following sense. We explicitly fix eight effectively constructed functions
	\[
		\xsym{\vartheta}_1,\dots,\xsym{\vartheta}_4\colon\mathbb{T}^2\times [0, 1] \longrightarrow \mathbb{R}^2, \quad \xsym{\vartheta}_5,\dots,\xsym{\vartheta}_8\colon\mathbb{T}^2 \longrightarrow \mathbb{R}^2, \quad \bigcup_{l=1}^8 \operatorname{supp}(\xsym{\vartheta}_l) \subset \Omega
	\]
	and subsequently seek the control~$\xsym{\xi} \colon \mathbb{T}^2\times(0,T_{\operatorname{ctrl}}) \longrightarrow\mathbb{R}^2$ of the specific form
	\begin{equation}\label{equation:IntroductionLocalizedControl}
		\begin{aligned}
			\xsym{\xi}(\xvec{x},t) = \sum_{l=1}^{4}\gamma_l(t) \xsym{\vartheta}_l(\xvec{x}, \sigma (T_{\operatorname{ctrl}}-t)) + \sum_{l=5}^{8} \gamma_l(t) \xsym{\vartheta}_l(\xvec{x}),
		\end{aligned}
	\end{equation}
	where $\sigma > T_{\operatorname{ctrl}}^{-1}$ and $\gamma_1(t) = \dots = \gamma_{4}(t) = 0$ for $t \leq T_{\operatorname{ctrl}}-\sigma^{-1}$. We call the force~$\xsym{\xi}$ \enquote{finitely decomposable}, as in the explicit decomposition \eqref{equation:IntroductionLocalizedControl} there appear only finitely many unknown parameters $\gamma_1, \dots, \gamma_8$, which are functions only of time, and $\sigma > 0$, which is a large number. For a more precise breakdown how~$\xsym{\xi}$ will be localized with respect to the space variables, we take any fixed nonempty subdomain~$\omegaup \subset \Omega$ and aim to construct $\xsym{\vartheta}_1,\dots,\xsym{\vartheta}_8$ with
	\begin{gather*}
		\operatorname{supp}(\xsym{\vartheta}_1) \cup \dots \cup \operatorname{supp}(\xsym{\vartheta}_4) \subset \omegaup\times(0,1), \quad \operatorname{supp}(\xsym{\vartheta}_5) \cup \dots \cup \operatorname{supp}(\xsym{\vartheta}_6) \subset \omegaup, \\ \operatorname{supp}(\xsym{\vartheta}_7) \cup \operatorname{supp}(\xsym{\vartheta}_8) \subset \Omega.
	\end{gather*}
	Our choice of the profiles~$\xsym{\vartheta}_1,\dots,\xsym{\vartheta}_8$ is universal, only depending on the control region. In particular, they are independent of the prescribed initial state, the target state, the viscosity, the force $\xvec{f}$, and the control time $T_{\operatorname{ctrl}}$. In this way, the global approximate controllability of \eqref{equation:NavierStokesVelocity} shall be established by determining $\gamma_1,\dots,\gamma_8$ and $\sigma \geq T_{\operatorname{ctrl}}^{-1}$, depending on the given data (\cf~\Cref{Figure:ta}). In fact, we are able to express $\gamma_5, \dots, \gamma_8$ by means of $\gamma_1, \dots, \gamma_4$ through universal formulas (\cf~\eqref{equation:zetagam1}), which justifies the point of view that one merely has to act on the system \eqref{equation:NavierStokesVelocity} with four controls: $\gamma_1,\dots,\gamma_4 \in \xLtwo((0,T_{\operatorname{ctrl}}); \mathbb{R})$.

	\quad

	\begin{figure}[ht!]\centering\resizebox{0.7\textwidth}{!}{\centering 
			\begin{tikzpicture}[node distance=1.8cm]
				\node (mmm) [blue0] {Fix the control region $\Omega \subset \mathbb{T}^2$ and $\omegaup \subset \Omega$};
				
				\node (ccc) [bluea, right = 0.8cm of mmm] {Effectively construct $\xsym{\vartheta}_1,\dots,\xsym{\vartheta}_8$};
				
				\node (ppp) [blueb, right = 0.8cm of ccc] {Select $\nu > 0$, $T_{\operatorname{ctrl}} > 0$, $\varepsilon > 0$, $\xvec{f}$, and states $\xvec{u}_0$ and $\xvec{u}_1$};
				
				\node (rrr) [blue, below = 0.8cm of ppp] {Obtain controls $\gamma_1,\dots,\gamma_4$, determine $\gamma_5,\dots,\gamma_8$, and fix $\sigma \geq T_{\operatorname{ctrl}}^{-1}$ sufficiently large};
				
				\node (eee) [bluec, left = 0.8cm of rrr] {The external force $\xsym{\xi}$ defined via \eqref{equation:IntroductionLocalizedControl} steers the fluid from $\xvec{u}_0$ to an $\varepsilon$-neighborhood of $\xvec{u}_1$};

				\draw [arrow,line width=0.6mm] (mmm) -- (ccc);
				\draw [arrow,line width=0.6mm] (ccc) -- (ppp);
				\draw [arrow,line width=0.6mm] (ppp) -- (rrr);
				\draw [arrow,line width=0.6mm] (rrr) -- (eee);
		\end{tikzpicture}}
		\caption{An illustration of the dependencies in the proposed framework.}
		\label{Figure:ta}
	\end{figure}
	
	\subsection{Notations}\label{subsection:notations}
	The basic $\xLtwo$-based spaces of average-free scalar fields and divergence-free vector fields are specified by
	\[
		\xH_{\operatorname{avg}} \coloneq \left\{ f \in 	\xLtwo(\mathbb{T}^2;\mathbb{R}) \, \left| \right. \,  \int_{\mathbb{T}^2} f(\xvec{x}) \, \xdx{\xvec{x}} = 0 \right\}, \,\, \mathbf{H}_{\operatorname{div}} \coloneq \left\{ \xvec{f} \in \xLtwo(\mathbb{T}^2;\mathbb{R}^2) \, \left| \right. \, \xdiv{\xvec{f}} = 0 \right\}.
	\]
	Moreover, for any $m \in \mathbb{N}_0 \coloneq \mathbb{N}\cup\{0\}$, 
	we employ the function spaces
	\begin{equation*}
		\xHn{m} \coloneq \xHn{m}(\mathbb{T}^2;\mathbb{R}) \cap \xH_{\operatorname{avg}}, \quad
		\bVn{m} \coloneq \xHn{m}(\mathbb{T}^2;\mathbb{R}^2) \cap \mathbf{H}_{\operatorname{div}},
	\end{equation*}
	where the $\xLtwo$-based Sobolev spaces $\xHn{m}(\mathbb{T}^2;\mathbb{R})$ and $\xHn{m}(\mathbb{T}^2;\mathbb{R}^2)$ are equipped with the canonical inner product $\langle \cdot, \cdot \rangle_m$ and the induced norm $\|\cdot\|_{m}$. Furthermore, the Lebesgue measure on the torus is assumed normalized such that $\int_{\mathbb{T}^2} \, \xdx{\xvec{x}} = 1$.
	
	\subsection{Main result}\label{subsection:mainresult}
	To formulate our main result, we fix an integer $N \geq 4$ and take the control region~$\Omega$ and its small subset $\omegaup$ from \Cref{subsection:controllabilityproblem}.~Then, as a generalization of \eqref{equation:IntroductionLocalizedControl}, we consider a localized finitely-decomposable force $\xsym{\xi} = \xsym{\xi}_{\gamma_1,\dots,\gamma_N,\sigma}$ of the form
	\begin{equation}\label{equation:IntroductionLocalizedControl2}
		\begin{aligned}
			\xsym{\xi}(\xvec{x},t) = \sum_{l=1}^N\gamma_l(t) \xsym{\vartheta}_l(\xvec{x}, \sigma (T_{\operatorname{ctrl}}-t)) + \sum_{l=N+1}^{N+4}\gamma_{l}(t) \xsym{\vartheta}_{l}(\xvec{x}).
		\end{aligned}
	\end{equation}
	In \eqref{equation:IntroductionLocalizedControl2}, the known effectively constructed profiles are (\cf~\Cref{section:General})
	\[
		\xsym{\vartheta}_1,\dots,\xsym{\vartheta}_N\colon\mathbb{T}^2 \times [0,1] \longrightarrow \mathbb{R}^2, \quad
		\xsym{\vartheta}_{N+1},\dots,\xsym{\vartheta}_{N+4}\colon\mathbb{T}^2 \longrightarrow \mathbb{R}^2, 
	\]
	with physical localization
	\begin{gather*}
		\bigcup_{l=1}^{N} \operatorname{supp}(\xsym{\vartheta}_l) \subset \omegaup \times (0, 1), \quad \operatorname{supp}(\xsym{\vartheta}_{N+1}) \cup \operatorname{supp}(\xsym{\vartheta}_{N+1}) \subset \omegaup, \\ \operatorname{supp}(\xsym{\vartheta}_{N+3}) \cup \operatorname{supp}(\xsym{\vartheta}_{N+4}) \subset \Omega,
	\end{gather*}
	while $\gamma_1,\dots,\gamma_{N+4} \in \xLtwo((0, T_{\operatorname{ctrl}}); \mathbb{R})$ and $\sigma > T_{\operatorname{ctrl}}^{-1}$ are the actual controls. The expression \eqref{equation:IntroductionLocalizedControl2} is meaningful because we shall ensure that $\gamma_1(t) = \dots = \gamma_{N}(t) = 0$ for $t \leq T_{\operatorname{ctrl}}-\sigma^{-1}$ (\cf~\eqref{equation:zetagam1}).

	\begin{thrm}\label{theorem:main0_vel}
		Let $r \in \mathbb{N}_0$, $T_{\operatorname{ctrl}} > 0$, $\xvec{u}_0, \xvec{u}_1 \in \bVn{{r}}$, $\xvec{f} \in \xLtwo((0,T_{\operatorname{ctrl}});\xHn{{r+2}}(\mathbb{T}^2;\mathbb{R}^2))$, $\nu > 0$, and $\varepsilon > 0$. There exist
		\[
			\gamma_1,\dots,\gamma_{N+4} \in \xLtwo((0, T_{\operatorname{ctrl}}); \mathbb{R}), \quad \sigma \geq T_{\operatorname{ctrl}}^{-1}
		\]
		such that the unique solution $\xvec{u} \in \xCzero([0,T_{\operatorname{ctrl}}];\bVn{{r}})\cap\xLtwo((0,T_{\operatorname{ctrl}});\bVn{{r+1}})$ to the Navier--Stokes problem \eqref{equation:NavierStokesVelocity} with the external force $\xsym{\xi}$ from \eqref{equation:IntroductionLocalizedControl2}
		satisfies
		\[
			\|\xvec{u}(\cdot, T_{\operatorname{ctrl}}) - \xvec{u}_1\|_{r} < \varepsilon.
		\]
		Moreover, $\gamma_{N+1}, \dots, \gamma_{N+4}$ can be explicitly determined from $\gamma_1,\dots,\gamma_{N}, \sigma$.
	\end{thrm}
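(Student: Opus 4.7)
My plan is to combine Coron's return method with the transported-modes strategy alluded to in the abstract, working throughout at the level of the vorticity. Taking the scalar curl of \eqref{equation:NavierStokesVelocity} eliminates the pressure and yields a transport--diffusion equation
\[
	\partial_t w + (\xvec{u}\cdot\xnab) w - \nu \Delta w = \operatorname{curl}\xvec{f} + \operatorname{curl}(\mathbb{I}_\Omega \xsym{\xi})
\]
for the scalar vorticity $w = \partial_1 u_2 - \partial_2 u_1$, while the velocity is recovered from $w$ via Biot--Savart up to the two scalar components of the mean flow. The profiles $\xsym{\vartheta}_{N+4},\xsym{\vartheta}_{N+5}$, chosen curl-free thanks to the assumption that $\Omega$ contains two cuts $\mathcal{C}_1,\mathcal{C}_2$ rendering $\mathbb{T}^2\setminus(\mathcal{C}_1\cup\mathcal{C}_2)$ simply connected, would be reserved exclusively for steering the velocity average (which the curl cannot see), whereas $\xsym{\vartheta}_{N+1},\dots,\xsym{\vartheta}_{N+3}$ are used to enforce zero-mean compensations at the vorticity level. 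This reduction decouples the divergence-free constraint from the main controllability argument.

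Following Coron's return method, I would next build a smooth reference velocity $\overline{\xvec{u}}$ that vanishes at $t=0$ and $t=T_{\operatorname{ctrl}}$ but whose characteristic flow, after the time rescaling $s = \sigma(T_{\operatorname{ctrl}}-t)$ with $\sigma$ large, sweeps every material point of $\mathbb{T}^2$ through the small subdomain $\omegaup$. Under this rescaling the viscous term becomes subordinate, and the linearization of the vorticity equation around $\overline{\xvec{u}}$ reduces, to leading order, to a pure linear transport equation. Pulling this transport equation back along the characteristics of $\overline{\xvec{u}}$, I would design the time-dependent profiles $\xsym{\vartheta}_l(\xvec{x},\sigma(T_{\operatorname{ctrl}}-t))$ for $l=1,\dots,N$ so that in the Lagrangian frame they act as $N$ prescribed \emph{global} Fourier modes on the torus. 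Thereby the physically localized control in $\omegaup$ is conjugated into a genuinely global finite-dimensional control on $\mathbb{T}^2$ for a linear transport equation, which is the reduction announced in the abstract.

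On this reduced problem, I would then apply an Agrachev--Sarychev type saturation argument: for $N\ge 4$ suitably chosen seed modes, the iterated Lie brackets densely generate the space of admissible vorticities, furnishing approximate controllability of the linear transport equation by the scalar controls $\gamma_1,\dots,\gamma_N$. The remaining coefficients $\gamma_{N+1},\dots,\gamma_{N+5}$ would be defined through the explicit average-correction formulas referred to in the statement. Returning from the linearization to the full Navier--Stokes vorticity equation, a standard perturbation and fixed-point argument, with $\sigma$ chosen sufficiently large to dominate both the viscous dissipation and the residual quadratic nonlinearity over the effective short time window, would upgrade linear approximate controllability to the nonlinear statement in the $\bVn{r}$-topology.

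The real difficulty, in my view, is not the saturation step but the careful interplay of the three scales $T_{\operatorname{ctrl}}$, $\sigma$, and $\nu$, together with the geometric design of $\overline{\xvec{u}}$ and of the transported profiles. One must simultaneously (i) exhibit a divergence-free reference flow whose rescaled characteristics sweep $\mathbb{T}^2$ through $\omegaup$ within a single period, (ii) verify that the induced pull-back genuinely sends a fixed finite collection of bumps localized in $\omegaup$ to the targeted global Fourier modes, while remaining compatible with the Biot--Savart reconstruction and the curl-free corrections supported in $\Omega$, and (iii) bound uniformly in $\sigma$ the error between the linear transport dynamics and the full Navier--Stokes dynamics, so that it can be absorbed by the accuracy $\varepsilon$. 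Reconciling these three constraints --- particularly the tension between the physical localization in $\omegaup$ and the need to act on global modes --- is where the bulk of the technical work will concentrate.
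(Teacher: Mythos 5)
Your high-level outline captures the paper's philosophy: pass to the vorticity, run a return method along a flow that sweeps the torus through $\omegaup$, conjugate localized bumps back to global Fourier modes along the characteristics, and reserve the curl-free cut profiles for the velocity average. However, the mechanism you propose for controlling the reduced linear transport problem is the wrong one. You invoke ``an Agrachev--Sarychev type saturation argument'' in which ``iterated Lie brackets densely generate'' the admissible vorticities. For a \emph{linear} transport equation $\partial_t v + (\overline{\xvec{u}}\cdot\xnab) v = g$ with additive controls, the reachable set at time $1$ is already a linear subspace, and Lie-bracket convexification of the kind used by Agrachev--Sarychev to expand the control directions of the \emph{nonlinear} Navier--Stokes system does not enlarge it. The paper's actual device is a duality/observability argument: the background field $\overline{\xvec{y}}^\star$ is manufactured from an observable family $(\phi_{\xsym{\ell}}^s,\phi_{\xsym{\ell}}^c)$ in the sense of \Cref{definition:observable}, and the key step (\Cref{lemma:ControlLinearNonlocalized}) shows $\ker(A_1^\ast)=\{0\}$ by differentiating the orthogonality relations in $\tau$, applying the observability of the coefficients to force $\langle (s_{\xsym{\ell}}\xsym{\ell}^\perp\cdot\xnab)g, z\rangle_m = 0$, and then inducting on the generation index of $\mathcal{H}_i(\mathcal{K})$. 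The paper emphasizes that this linear test ``is based on a linear test involving vector fields that are constructed from observable families,'' whence its saturation condition (\Cref{lemma:saturation}) is simply that $\mathcal{K}$ generates $\mathbb{Z}^2$ over $\mathbb{Z}$, with no length condition of the Agrachev--Sarychev type. Without this observability ingredient your reduction to the linear transport equation is not enough to conclude density of the reachable set.

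Two more specific gaps. First, your (ii) --- ``verify that the induced pull-back genuinely sends a fixed finite collection of bumps localized in $\omegaup$ to the targeted global Fourier modes'' --- is exactly where the paper has to do substantial combinatorial work: the return profile $\overline{\xvec{y}}$ is taken to be \emph{spatially constant} (a sequence of rigid torus translations, \Cref{thrm:returnmethodflow}), $\mathbb{T}^2$ is covered by $K$ overlapping squares with an adapted partition of unity, and the force $\widehat{\eta}$ in \eqref{equation:ExplicitControl3} is assembled so that the change of variables along characteristics reproduces the global control $g$ exactly. Choosing $\overline{\xvec{y}}$ spatially constant is what makes the squeezing exact rather than approximate, and a generic divergence-free return flow would not have this property. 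Second, the compression of $g$ to a cutoff-multiplied force in $\omegaup$ destroys the zero-average property, so a correction is mandatory at the vorticity level; the ``average correction with memory'' of \eqref{equation:ExplicitControl2} and \Cref{remark:explavgcr} is not a bookkeeping afterthought but is needed to ensure that $\widetilde{\eta}$ is even an admissible vorticity forcing while leaving the characteristic-line identity \eqref{equation:TransformationsAlongCharacertistics} intact. Your proposal acknowledges that some average correction is needed but does not identify that it must be propagated along the translated squares over successive passes through $\omegaup$. Finally, the nonlinear passage in \Cref{lemma:SmallTimeConvergenceToFinalstateOfLinearized} is a direct energy/Gr\"onwall comparison in the Coron--Nersesyan scaling, not a fixed-point argument; this is a minor point, but the uniformity of the convergence in the data, which you do not mention, is what enables the two-stage gluing in \Cref{theorem:main0_general}.
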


	To our knowledge \Cref{theorem:main0_vel} provides a first result towards Agrachev's problem from \cite[Section 7]{Agrachev2014}. As requested there, we only employ a finite number of actual controls, which are scalar functions merely of time, while the overall control force is physically localized. However, strictly speaking, the original version of Agrachev's problem is not fully resolved here, as our profiles $\xsym{\vartheta}_1,\dots,\xsym{\vartheta}_N$ depend on time, while \cite{Agrachev2014} asks for a control of the type \eqref{equation:truefinitedimensionalcontrol} with time-independent modes. 

	\begin{rmrk}\label{remark:energysolution}
		To simplify the presentation, weak notions of solutions to \eqref{equation:NavierStokesVelocity} are not discussed here, and we shall assume $r \geq 2$ in the proof of \Cref{theorem:main0_vel}. This is without loss of generality, since $\xvec{f} \in \xLtwo((0,T_{\operatorname{ctrl}});\xHn{{2}}(\mathbb{T}^2;\mathbb{R}^2))$ and parabolic smoothing effects ensure that the uncontrolled solution to \eqref{equation:NavierStokesVelocity} with initial state $\xvec{u}_0 \in \bVn{{0}}$ belongs at any time $t > 0$ to $\bVn{{2}}$; \eg, see \cite{Temam2001}.
	\end{rmrk}

	\subsection{Methodology} 
	Our strategy for showing \Cref{theorem:main0_vel} is newly developed and promotes viewing the building blocks $\xsym{\vartheta}_1, \dots, \xsym{\vartheta}_N$ from \eqref{equation:IntroductionLocalizedControl2} as \enquote{transported Fourier modes}. This involves explicit ad-hoc constructions and the following main ingredients:
	\begin{itemize}
		\item the return method introduced by Coron in \cite{Coron1992} for the stabilization of a mechanical system, and which has thereafter been applied to numerous nonlinear partial differential equations (\cf~\cite[Part 2, Chapter 6]{Coron2007});
		\item a linear test for approximate controllability developed in \cite{Nersesyan2021}, which involves the notion of observable families introduced in \cite{KuksinNersesyanShirikyan2020};
		\item a simplified saturation property without length condition (\cf~\Cref{subsection:saturation});
		\item a convection strategy based on rigid translations of the torus (\cf~\Cref{theorem:returnmethodflow});
		\item the localization of finite-dimensional controls via careful modifications along certain flow maps (\cf~\Cref{subsection:localizingctrl}).
	\end{itemize}
	In our approach, the controllability problem for the velocity is reduced to one for the vorticity, which then allows to utilize the underlying transport mechanisms of the considered fluid model. We linearize the vorticity equation around a special return method profile and derive a related controllability problem for a scalar transport equation. However, in order to determine the parameters $\gamma_1,\dots,\gamma_{N+4}$ in \eqref{equation:IntroductionLocalizedControl2}, we have to consider a modified transport problem where convection takes place along a vector field encoding a certain observability property. Concerning the latter, we obtain finite-dimensional controls of the type
	\begin{equation}\label{equation:fintype}
		g = \sum_{\xsym{\ell}\in\mathcal{K}}(\zeta_{\xsym{\ell}}^s(t) \sin(\xsym{\ell}\cdot \xvec{x}) + \zeta_{\xsym{\ell}}^c(t) \cos(\xsym{\ell}\cdot \xvec{x})),
	\end{equation}
	where~$\xsym{\ell} = (\ell_1,\ell_2)$ runs over a finite subset~$\mathcal{K}~\subset~\mathbb{Z}^2\setminus\{\xsym{0}\}$, providing the Fourier modes that are allowed to be triggered by the controls. The coefficients $(\zeta_{\xsym{\ell}}^s, \zeta_{\xsym{\ell}}^c)$ in \eqref{equation:fintype}, which will be used to calculate $\gamma_1,\dots,\gamma_{N+4}$, depend only on time and can be viewed as the actual controls. 
	Subsequently, the effects of the force $g$ are localized in the control region~$\omegaup$. To this end, the drift vector field encoding the observability property is interchanged with a return method trajectory shifting the whole torus~$\mathbb{T}^2$ such that all information passes through $\omegaup$. The latter step will motivate the particular constructions of $\xsym{\vartheta}_1,\dots,\xsym{\vartheta}_{N}$, which are essentially the Fourier modes from \eqref{equation:fintype} composed with a flow map translating between the observable and the return method drift. The additional profiles $\xsym{\vartheta}_{N+1}$ and $\xsym{\vartheta}_{N+2}$ render the force acting on the vorticity average-free, while $\xsym{\vartheta}_{N+3}$ and $\xsym{\vartheta}_{N+4}$ are curl-free and allow to manipulate the velocity average. Next, scaling arguments related to the return method yield  the approximate controllability in a short time of the original nonlinear vorticity problem driven by large finitely decomposable controls. Eventually, the proof of \Cref{theorem:main0_vel} is concluded by deriving formulas for the velocity controls based on the previously found vorticity controls.

	\subsection{Related literature} 
	There exists a rich literature on controllability problems for incompressible Navier--Stokes and Euler systems. A traditional point of view has been to seek the controls as abstract elements of infinite-dimensional function spaces, with significant attention being devoted to the case of bounded domains. Typically, for such a setup, one aims to obtain interior controls localized within a small subdomain or controls acting on a small part of the domain's boundary. In this context, J.-L.~Lions~\cite{LionsJL1991} raised several open problems which have so far inspired more than three decades of fruitful research, while key questions such as the global approximate controllability for the Navier--Stokes system with the no-slip boundary condition remain open until the present day. Substantial milestones have been accomplished by Coron in~\cite{Coron1996EulerEq,Coron96}, where he obtains by way of his return method the global exact and approximate controllability for planar Euler and Navier--Stokes problems respectively. The $2$D Navier--Stokes equations on manifolds without boundary are specifically covered by Coron and Fursikov in \cite{CoronFursikov1996}. Glass further developed the return method in \cite{Glass2000} to address three-dimensional configurations. A different point of view has been pursued by Lions and Zuazua in \cite{LionsZuazua1998}, where they achieved exact controllability results for Galerkin's approximations of incompressible Navier--Stokes problems by combining duality arguments with a contraction mapping principle. Several recent advances regarding the incompressible Navier--Stokes system are due to Coron~\etal~in \cite{CoronMarbachSueur2020} and Liao \etal~in \cite{LiaoSueurZhang2022}, noting that this list is far away from being comprehensive and that many past, as well as contemporary, developments are captured by the references therein.

	The question of controllability by finite-dimensional controls supported in the entire domain constitutes a subject of active research, as well. In this case, the controls are sought to be of a very specific form (\cf~\eqref{equation:truefinitedimensionalcontrol}), but are, so far, not physically localized in a given subdomain. A major breakthrough in this direction has been achieved for two-dimensional periodic domains by Agrachev and Sarychev in~\cite{AgrachevSarychev2005,AgrachevSarychev2006,AgrachevSarychev2008}, who developed a geometric control approach that has subsequently been extended and improved in various ways. For instance, drawing upon the geometric arguments due to Agrachev and Sarychev, three-dimensional Navier--Stokes problems have been treated by Shirikyan in~\cite{Shirikyan2006,Shirikyan2007} and perfect fluids by Nersisyan in~\cite{Nersisyan2011}. A new proof for results of the same type has been presented recently by Nersesyan in~\cite{Nersesyan2021}, where Coron's return method is employed in combination with a special linear test. Moreover, Phan and Rodrigues consider in~\cite{PhanRodrigues2019} a Navier--Stokes problem which is posed in a cubical domain with the Lions boundary conditions. A concise and self-contained account of the Agrachev--Sarychev method, elaborating the example of the one-dimensional Burgers equation, is provided in \cite{Shirikyan2018}.
	
	So far, the previous two paragraphs resemble rather disjoint lines of research. The notion of transported Fourier modes, which are the building blocks of our control force, is in this regard intended to connect concepts from both worlds. At least three immediate questions remain. Is it possible to build $\xsym{\vartheta}_1,\dots,\xsym{\vartheta}_N$ independent of time? Can one dispense with the profiles $\xsym{\vartheta}_{N+3}$ and $\xsym{\vartheta}_{N+4}$ that are here necessarily supported along smooth cuts? How to treat flows in the presence of physical boundaries?

	\subsection{Organization of this article} 
	In \Cref{section:transportobservabledrift}, the controllability via finite-dimensional controls is shown for transport equations with special drifts that entail a type of observability property. Next, in \Cref{section:localizedcontrols}, localized finitely decomposable controls are constructed for transport problems with special return method drift. Subsequently, in \Cref{section:Navier--Stokes}, hydrodynamic scaling arguments are setup for the construction of finitely decomposable controls for the Navier--Stokes system in vorticity form. Finally, in \Cref{section:General}, the proof of \Cref{theorem:main0_vel} is completed.

	\section{Transport with generating drift: finite-dimensional controls}\label{section:transportobservabledrift}
	For any given non-empty finite subset $\mathcal{K} \subset \mathbb{Z}^2_* \coloneq \mathbb{Z}^2 \setminus \{\xvec{0}\}$, we define a finite-dimensional space via
	\begin{gather}\label{equation:HK}
		\mathcal{H}(\mathcal{K}) \coloneq \operatorname{span} \left\{ s_{\xsym{\ell}}, c_{\xsym{\ell}} \, \left| \right. \, \xsym{\ell} \in \mathcal{K} \right\}, \\
			s_{\xsym{\ell}}(\xvec{x}) \coloneq \sin( \xsym{\ell} \cdot \xvec{x}), \quad c_{\xsym{\ell}}(\xvec{x}) \coloneq \cos( \xsym{\ell} \cdot \xvec{x}), \quad \xvec{x} \in \mathbb{T}^2, \quad \xsym{\ell} \in \mathbb{Z}_{*}^2.\label{equation:deftrigfam}
	\end{gather}
	In particular, one has $\mathcal{H}(\mathcal{K}) = \mathcal{H}(-\mathcal{K})$. Under the assumption that $\mathcal{K}$ satisfies a saturation property introduced subsequently in \Cref{subsection:saturation} (\eg, satisfied by $\mathcal{K} = \{ [1,0]^{\top}, [0,1]^{\top}\}$), we shall obtain in this section a controllability result for linear transport equations of the form
	\[
		\partial_t v + (\overline{\xvec{y}}^{\star}\cdot \xnab) v = g,
	\]
	where $g$ is a $\mathcal{H}(\mathcal{K})$-valued control and $\overline{\xvec{y}}^{\star}$ denotes a generating vector field as defined below in \Cref{subsection:observable_families}.
	
	\subsection{Saturation property}\label{subsection:saturation}
	A non-decreasing sequence of finite subsets $(\mathcal{K}_j)_{j\in\mathbb{N}_0} \subset \mathbb{Z}^2_*$ is recursively defined by means of $\mathcal{K}_0 \coloneq \mathcal{K} \cup (-\mathcal{K})$ and
	\begin{equation}\label{EE_j}
		\mathcal{K}_j \coloneq \mathcal{K}_{j-1} \cup \left\{ \xsym{\ell}_1 + \xsym{\ell}_2 \, \left| \right. \, \xsym{\ell}_1 \in \mathcal{K}_{j-1}, \xsym{\ell}_2 \in \mathcal{K}_0, \xsym{\ell}_1 \nparallel \xsym{\ell}_2 \right\}, \quad j \in \mathbb{N},
	\end{equation}
	where the constraint $\xsym{\ell}_1 \nparallel \xsym{\ell}_2$ (meaning: $\xsym{\ell}_1$ and $\xsym{\ell}_2$ are not parallel) ensures~$\mathcal{K}_j\subset\mathbb{Z}^2_*$ for each $j \in \mathbb{N}$.  Associated with $(\mathcal{K}_j)_{j\in\mathbb{N}_0}$, a sequence of subspaces $(\mathcal{H}_j(\mathcal{K}))_{j\in\mathbb{N}_0}$ is introduced via
	\[
		\mathcal{H}_j(\mathcal{K}) \coloneq \mathcal{H}(\mathcal{K}_j), \quad j \in \mathbb{N}_0.
	\]
	
	We provide a simple characterization of subsets $\mathcal{K} \subset \mathbb{Z}^2_*$ for which $\cup_{j \in \mathbb{N}_0} \mathcal{H}_j(\mathcal{K})$ is dense in $\xHn{m}$ for any $m \in \mathbb{N}_0$, where we recall that $\xHn{m}$  is the space of $\xHn{m}(\mathbb{T}^2;\mathbb{R})$ functions with zero average (\cf~\Cref{subsection:notations}).
	
	\begin{dfntn}\label{definition:saturation}
		We say that a finite subset $\mathcal{K} \subset \mathbb{Z}^2_*$ is
		\begin{enumerate}[a)]
			\item a generator if $\operatorfont{span}_{\mathbb{Z}}(\mathcal{K}) = \mathbb{Z}^2$;
			\item saturating if $\cup_{j \in \mathbb{N}_0} \mathcal{H}_j(\mathcal{K})$ is dense in $\xHn{m}$ for each $m \in \mathbb{N}_0$.
		\end{enumerate}
	\end{dfntn}
	
	The proof of the next lemma is obvious and rests on the fact that the collection $(s_{\xsym{\ell}},c_{\xsym{\ell}})_{\xsym{\ell} \in \mathbb{Z}_{*}^2}$ specified in \eqref{equation:deftrigfam} constitutes a complete orthogonal system in $\xHn{m}$ for each $m \in \mathbb{N}_0$. 
	
	\begin{lmm}\label{lemma:saturation}
		A finite subset $\mathcal{K} \subset \mathbb{Z}^2_*$ is saturating if and only if it is a generator.
	\end{lmm} 
	
	\begin{rmrk}
		In contrast to previous literature such as~\cite{AgrachevSarychev2005,AgrachevSarychev2006}, where saturating sets have been employed in the context of two-dimensional Navier--Stokes and Euler systems, no condition on the length of vectors belonging to the generator is required here.  The reason that we can use this simpler characterization of saturation is our different underlying approach, which is based on a type of linear test involving vector fields that are constructed from observable families, as introduced in the next section.
	\end{rmrk}

	\subsection{Vector fields obtained from observable families}\label{subsection:observable_families}
	We recall the notion of observable families that has been introduced in \cite{KuksinNersesyanShirikyan2020} for the study of ergodicity properties of randomly forced partial differential equations; see also \cite{Nersesyan2021} for an application to the controllability of the Navier--Stokes equations.
	\begin{dfntn}\label{definition:observable}
		Let $T > 0$ and $n \in \mathbb{N}$ be fixed. A family $(\phi_j)_{j \in \{1,\dots,n\}} \subset \xLtwo((0,T);\mathbb{R})$ is called observable if, for each subinterval $\xJ \subset (0,T)$, for any~$b \in \xCzero(\xJ;\mathbb{R})$, and for all~$(a_j)_{j \in \{1,\dots,n\}} \subset \xCone(\xJ; \mathbb{R})$, one has the implication
		\[
			b + \sum_{j=1}^n a_j \phi_j = 0 \, \mbox{ in } \xLtwo(\xJ;\mathbb{R}) \quad 	\Longrightarrow \quad \forall t \in \xJ\colon b(t) = a_1(t) = \cdots = a_n(t) = 0.
		\]
	\end{dfntn}
	
	\begin{rmrk}
		As explained in \cite[Section 3.3]{Nersesyan2021}, see also \cite{KuksinNersesyanShirikyan2020}, one can construct observable families in the sense of \Cref{definition:observable} in an explicit way and express them by means of closed formulas. For the sake of completeness, we briefly recall such a construction. Let $(\phi_j)_{j \in \{1,\dots,n\}}$ be any family of bounded measurable functions $(0,T)\longrightarrow \mathbb{R}$ such that each $\phi_j$ has well-defined left and right limits in the whole interval $(0,T)$. Next, take an arbitrary collection $(\xD_j)_{j\in\{1,\dots,n\}} \subset (0, T)$ of disjoint countable sets which are dense in $(0,T)$, and which are chosen such that $\phi_j$ is discontinuous on~$\xD_j$ while, at the same time, being continuous on $(0,T)\setminus \xD_j$. Then, one can show that~$(\phi_j)_{j \in \{1,\dots,n\}}$ constitutes an observable family. 
	\end{rmrk}
	
	\paragraph{Construction of $\overline{\xvec{y}}^{\star}$.} For $T > 0$ and a finite saturating set $\mathcal{K} \subset \mathbb{Z}_{*}^2$, we fix an observable family $(\phi_{\xsym{\ell}}^{\operatorname{s}},\phi_{\xsym{\ell}}^{\operatorname{c}})_{\xsym{\ell} \in \mathcal{K}} \subset \xLtwo((0,T); \mathbb{R})$ and choose $\phi \in \xCone([0,T];\mathbb{R})$ such that $\phi(t) = 0$ if an only if $t = T$. Then, we define the divergence-free vector field
	\begin{equation}\label{equation:DefinitionYStar}
		\overline{\xvec{y}}^{\star}(\xvec{x}, t) \coloneq \sum_{\xsym{\ell} \in \mathcal{K}} \left(\psi_{\xsym{\ell}}^{\operatorname{s}}(t) s_{\xsym{\ell}}(\xvec{x}) \xsym{\ell}^{\perp} + \psi_{\xsym{\ell}}^{\operatorname{c}}(t) c_{\xsym{\ell}}(\xvec{x}) \xsym{\ell}^{\perp} \right) \in \xWn{{1,2}}((0,T);\xCinfty(\mathbb{T}^2;\mathbb{R}^2)),
	\end{equation}
	where the coefficients $(\psi_{\xsym{\ell}}^{\operatorname{s}},\psi_{\xsym{\ell}}^{\operatorname{c}})_{\xsym{\ell} \in \mathcal{K}} \subset \xWn{{1,2}}((0,T);\mathbb{R})$ are given by
	\begin{equation*}\label{equation:definitionofpsi}
		\psi_{\xsym{\ell}}^{\operatorname{s}}(t) \coloneq \phi(t) \int_0^t \phi_{\xsym{\ell}}^{\operatorname{s}}(\sigma) \, \xdx{\sigma}, \quad \psi_{\xsym{\ell}}^{\operatorname{c}}(t) \coloneq \phi(t) \int_0^t \phi_{\xsym{\ell}}^{\operatorname{c}}(\sigma) \, \xdx{\sigma}, \quad \xsym{\ell} \in \mathcal{K},
	\end{equation*}
	and $\xsym{\ell}^{\perp} \coloneq [-\ell_2, \ell_1]^{\top}$ for any $\xsym{\ell} = [\ell_1, \ell_2]^{\top} \in \mathcal{K}$. 
	
	We show next that $\mathcal{H}(\mathcal{K})$-valued controls can generate all desired frequencies when acting on linear transport equations with drift $\overline{\xvec{y}}^{\star}$ obtained from an observable family as described above. This consequence motivates calling~$\overline{\xvec{y}}^{\star}$ a generating vector field.

	\subsection{Existence of finite-dimensional controls}
	Given $T > 0$ and a finite saturating set $\mathcal{K} \subset \mathbb{Z}_{*}^2$, let $\overline{\xvec{y}}^{\star} \in \xWn{{1,2}}((0,T);\xCinfty(\mathbb{T}^2;\mathbb{R}^2))$ be the divergence-free vector field from \eqref{equation:DefinitionYStar}. The next theorem provides $\mathcal{H}(\mathcal{K})$-valued controls for a linear transport problem with drift $\overline{\xvec{y}}^{\star}$; these controls will  act everywhere in $\mathbb{T}^2$.
	\begin{thrm}\label{lemma:FiniteDimensionalControlsTransport}
		For any $m \in \mathbb{N}$, $v_T \in \xHn{{m}}$, and $\varepsilon > 0$, there exists a control $g \in \xLtwo((0,T); \mathcal{H}(\mathcal{K}))$ such that the solution $v \in \xCzero([0,T];\xHn{{m}})\cap\xWn{{1,2}}((0,T);\xHn{{m-1}})$
		to the linear transport problem
		\begin{equation}\label{equation:TransportFC}
			\partial_t v + (\overline{\xvec{y}}^{\star}\cdot \xnab) v = g,
			\quad v(\cdot, 0) = 0
		\end{equation}
		satisfies 
		\begin{equation*}\label{equation:TransportFCCond}
			\|v(\cdot,T)-v_T\|_{m} < \varepsilon.
		\end{equation*}
	\end{thrm}
	\begin{proof}
		We employ a similar strategy as developed in \cite{Nersesyan2021}*{Section 3.3}, thereby relying on the fact that $\overline{\xvec{y}}^{\star}$ has been constructed based on an observable family. First, we denote by
		\[
			{R}(t,\tau) \colon \xHn{m} \longrightarrow \xHn{m}, \quad \widetilde{v}_0 \longmapsto \widetilde{v}(t), \quad 0 \leq \tau \leq t \leq T
		\]
		the two-parameter family of resolving operators in $\xHn{m}$ for the linear problem
		\begin{equation*}\label{equation:LinearizedStartingAtTau}
			\partial_t \widetilde{v} + (\overline{\xvec{y}}^{\star} \cdot \xnab)\widetilde{v} = 0, \quad
			\widetilde{v}(\cdot, \tau) = \widetilde{v}_0.
		\end{equation*}
		Consequentially, the resolving operator ${A} \colon \xLtwo((0,T);\xHn{m}) \longrightarrow \xHn{m}$ providing the solution to \eqref{equation:TransportFC} at time $t = T$ is by Duhamel's principle of the form
		\[
			{A}g = \int_0^{T} {R}(T, \tau) g(\tau) \, \xdx{\tau}, \quad g \in \xLtwo((0,T);\xHn{m}).
		\]
		After denoting the orthogonal projector onto $\mathcal{H}_0(\mathcal{K}) = \mathcal{H}(\mathcal{K})$ with respect to $\xHn{m}$ as
		\[
			{P}_{\mathcal{H}_0(\mathcal{K})}\colon \xHn{m} \longrightarrow \mathcal{H}_0(\mathcal{K}) \subset \xHn{m},
		\]
		we introduce the control-to-state operator
		\[
			{A}_1 \coloneq {A} {P}_{\mathcal{H}_0(\mathcal{K})} \colon \xLtwo((0,T);\xHn{m}) \longrightarrow \xHn{m}.
		\]
		It remains to show that the image of ${A}_1$ is dense in $\xHn{m}$. The latter property is equivalent to $\operatorname{ker}({A}_1^{\ast}) = \{0\}$, and the adjoint operator ${A}_1^* \colon \xHn{m} \longrightarrow \xLtwo((0,T);\xHn{m})$ can be expressed by means of
		\[
			({A}_1^* z) (\cdot) = {P}_{\mathcal{H}_0(\mathcal{K})} {R}(T,\cdot)^{*} z,
		\]
		where ${R}(T,\tau)^*$ denotes for $\tau \in [0, T]$ the $\xHn{m}$-adjoint of ${R}(T,\tau)$.
		\paragraph{Step 1. The idea for showing $\operatorname{ker}(A_1^*) = \{0\}$.}
		Let $z \in \operatorname{ker}(A_1^*)$ be arbitrarily fixed. Then, one has
		\begin{equation*}
			\begin{aligned}
				\left\langle {R}(T,\tau)g, z \right\rangle_m = \left\langle {P}_{\mathcal{H}_0(\mathcal{K})}g, {R}(T,\tau)^*z \right\rangle_m = \left\langle g, ({A}_1^*z)(\tau) \right\rangle_m = 0
			\end{aligned}
		\end{equation*}
		for almost all $\tau \in [0,T]$ and all $g \in \mathcal{H}_0(\mathcal{K})$. In fact, because $\tau \longmapsto R(T,\tau)$ is continuous,
		\begin{equation}\label{equation:orthogonalityH}
			\begin{aligned}
				\left\langle {R}(T,\tau)g, z \right\rangle_m = 0
			\end{aligned}
		\end{equation}
		for all $\tau \in [0,T]$ and $g \in \mathcal{H}_0(\mathcal{K})$.
		Consequently, taking $\tau = T$ in \eqref{equation:orthogonalityH} yields
		\begin{equation}\label{equation:orthogonalityHc1}
			\langle g, z \rangle_m = 0, \quad g \in \mathcal{H}_0(\mathcal{K}).
		\end{equation}
		Therefore, $z$ is orthogonal to $\mathcal{H}_0(\mathcal{K})$ in $\xHn{m}$. Since the inclusion~$\cup_{i \in \mathbb{N}_0}\mathcal{H}_i(\mathcal{K}) \subset \xHn{m}$ is dense by \Cref{lemma:saturation}, we can conclude $z = 0$ by showing orthogonality relations of the type \eqref{equation:orthogonalityHc1} for all $g \in \cup_{i \in \mathbb{N}_0}\mathcal{H}_i(\mathcal{K})$.
		
		Given any $T_1 \in [0,T]$, let us distinguish the element $z_1 \coloneq {R}(T,T_1)^*z$, which satisfies due to \eqref{equation:orthogonalityH} the relations
		\begin{equation}\label{equation:OrthogonalityRelationz0}
			\left\langle {R}(T_1,\tau) g, z_1 \right\rangle_m = \left\langle {R}(T,\tau)g, z \right\rangle_m = 0
		\end{equation}
		for all $\tau \in [0,T_1]$ and $g \in \mathcal{H}_0(\mathcal{K})$. Taking $\tau = T_1$ in \eqref{equation:OrthogonalityRelationz0}, we find that
		\begin{equation}\label{equation:OrthogonalityRelationz1}
			\left\langle g, z_1 \right\rangle_m = \left\langle {R}(T_1,T_1) g, z_1 \right\rangle_m = 0, \quad g \in \mathcal{H}_0(\mathcal{K}).
		\end{equation}
		If $\left\langle g, z_1 \right\rangle_m = 0$ from \eqref{equation:OrthogonalityRelationz1} would be valid for all~$g \in \mathcal{H}_i(\mathcal{K})$ with $i \in \mathbb{N}$, then, by inserting $T_1 = T$ into the definition of~$z_1$, also~\eqref{equation:orthogonalityHc1} would hold for all $g \in \mathcal{H}_i(\mathcal{K})$. This motivates the next step, where we shall verify by induction over $i \in \mathbb{N}$ that
		\begin{equation}\label{equation:OrthogonalityRelationzi}
			\left\langle g, z_1 \right\rangle_m = \left\langle {R}(T_1,T_1) g, z_1 \right\rangle_m = 0, \quad g \in \mathcal{H}_i(\mathcal{K}), \quad i \in \mathbb{N}.
		\end{equation}
		
		\paragraph{Step 2. Induction base.} We begin with establishing \eqref{equation:OrthogonalityRelationzi} for $i = 1$. To this end, we arbitrarily fix $g \in \mathcal{H}_0(\mathcal{K})$ and consider 
		\[
			q(t, \tau) \coloneq {R}(t + \tau, \tau) g, \quad Q(t, \tau) \coloneq \partial_{\tau}q(t, \tau) 
		\] 
		as $\xHn{m}$-valued functions of $t \in [0,T-\tau]$, depending on a parameter $\tau \in [0,T)$. In particular, they solve the initial value problems
		\begin{equation*}\label{equation:PdeForQ}
			\begin{cases}
				\partial_t q(t ,\tau) + (\overline{\xvec{y}}^{\star}(\cdot, t+\tau) \cdot \xnab) q(t, \tau) = 0,\\
				\partial_t Q(t ,\tau) + (\overline{\xvec{y}}^{\star}(\cdot, t+\tau) \cdot \xnab) Q(t, \tau) +  (\partial_t\overline{\xvec{y}}^{\star}(\cdot, t+\tau) \cdot \xnab) q(t, \tau) = 0,\\
				q(0, \tau) = g, \quad
				Q(0, \tau) = 0
			\end{cases}
		\end{equation*}
		for $t \in (0, T-\tau)$. After integrating from $t= 0$ to $t=T_1-\tau$ the $\xHn{m}$-inner product of the equation satisfied by~$Q$ with~$z_1$, it follows that
		\begin{equation}\label{equation:equalityobs0}
			\begin{aligned}
				0 & = \left\langle Q(T_1-\tau,\tau), z_1 \right\rangle_m + \int_{0}^{T_1 - \tau} \left\langle (\overline{\xvec{y}}^{\star}(\cdot, t+\tau) \cdot \xnab) Q(t, \tau), z_1 \right\rangle_m  \, \xdx{t} \\
				& \quad + \int_{0}^{T_1 - \tau} \left\langle (\partial_t\overline{\xvec{y}}^{\star}(\cdot, t + \tau) \cdot \xnab) q (t, \tau), z_1 \right\rangle_m \, \xdx{t}\\
				& = \left\langle Q(T_1-\tau,\tau), z_1 \right\rangle_m + \int_{0}^{T_1 - \tau} \left\langle (\overline{\xvec{y}}^{\star}(\cdot, t+\tau) \cdot \xnab) Q(t, \tau), z_1 \right\rangle_m  \, \xdx{t} \\
				& \quad + \int_{\tau}^{T_1} \left\langle (\partial_t\overline{\xvec{y}}^{\star}(\cdot, t) \cdot \xnab) [{R}(t, \tau)g], z_1 \right\rangle_m \, \xdx{t}.
			\end{aligned}
		\end{equation}
		Moreover, by using the definition of $Q$ and calculating $\partial_{\tau} ({R}( t+\tau , \tau) g)$ via the chain rule, we find that
		\begin{equation}\label{equation:FormulaForPartialTauRAtT1}
			\partial_{\tau}{R}(T_1, \tau)g = Q(T_1 - \tau, \tau) + (\overline{\xvec{y}}^{\star}(\cdot, T_1) \cdot \xnab) [{R}(T_1, \tau)g].
		\end{equation}
		Next, we take $\partial_{\tau}$ in $\langle {R}(T_1,\tau) g, z_1 \rangle_m = 0$ from \eqref{equation:OrthogonalityRelationz0} and plug \eqref{equation:FormulaForPartialTauRAtT1} into \eqref{equation:equalityobs0}, which implies
		\begin{equation*}
			\begin{aligned}
				0 & = \int_{0}^{T_1 - \tau} \left\langle (\overline{\xvec{y}}^{\star}(\cdot, t+\tau) \cdot \xnab) Q(t, \tau), z_1 \right\rangle_m  \, \xdx{t} \\
				& \quad + \int_{\tau}^{T_1} \left\langle (\partial_t\overline{\xvec{y}}^{\star}(\cdot, t) \cdot \xnab) [{R}(t, \tau)g], z_1 \right\rangle_m \, \xdx{t} - \left\langle (\overline{\xvec{y}}^{\star}(\cdot, T_1) \cdot \xnab) [{R}(T_1, \tau)g], z_1 \right\rangle_m.
			\end{aligned}
		\end{equation*}
		Differentiating the latter relation with respect to $\tau$, while invoking the definition of the vector field $\overline{\xvec{y}}^{\star}$ from \eqref{equation:DefinitionYStar}, yields
		\begin{equation}\label{equation:RepresentationViaObservableFamily}
			b(\tau) + \sum_{\xsym{\ell} \in \mathcal{K}} \left(a_{\xsym{\ell}}^{s}(\tau) \phi_{\xsym{\ell}}^{s}(\tau) + a_{\xsym{\ell}}^{c}(\tau) \phi_{\xsym{\ell}}^{c}(\tau) \right) = 0,
		\end{equation}
		where $b \in \xCzero([0,T_1];\mathbb{R})$ is the function
		\begin{equation*}
			\begin{aligned}
				b(\tau) & \coloneq \partial_{\tau} \int_{0}^{T_1 - \tau} \left\langle (\overline{\xvec{y}}^{\star}(\cdot, t+\tau) \cdot \xnab) Q(t, \tau), z_1 \right\rangle_m  \, \xdx{t} \\
				& \quad - \sum_{\xsym{\ell} \in \mathcal{K}} \left\langle \left(\phi'(\tau) \int_0^{\tau}\left(\phi_{\xsym{\ell}}^{s}(\sigma) s_{\xsym{\ell}} \xsym{\ell}^{\perp} + \phi_{\xsym{\ell}}^{c}(\sigma) c_{\xsym{\ell}}\xsym{\ell}^{\perp} \right) \xdx{\sigma}
				\cdot \xnab \right)g, z_1 \right\rangle_m \\
				& \quad + \int_{\tau}^{T_1} \left\langle (\partial_t\overline{\xvec{y}}^{\star}(\cdot, t) \cdot \xnab) [\partial_{\tau}{R}(t, \tau)g], z_1 \right\rangle_m \, \xdx{t} \\
				& \quad - \partial_{\tau} \left\langle (\overline{\xvec{y}}^{\star}(\cdot, T_1) \cdot \xnab) [{R}(T_1, \tau)g], z_1 \right\rangle_m,
			\end{aligned}
		\end{equation*}
		and the coefficients $(a_{\xsym{\ell}}^{s}, a_{\xsym{\ell}}^{c})_{\xsym{\ell} \in \mathcal{K}} \subset \xCone([0,T]; \mathbb{R})$ are given by 
		\[
			a_{\xsym{\ell}}^{s}(\tau) \coloneq -\phi(\tau) \left\langle  (s_{\xsym{\ell}} \xsym{\ell}^{\perp} \cdot \xnab)g, z_1 \right\rangle_m, \quad a_{\xsym{\ell}}^{c}(\tau) \coloneq -\phi(\tau) \left\langle (c_{\xsym{\ell}}\xsym{\ell}^{\perp} \cdot \xnab)g, z_1 \right\rangle_m.
		\]
		Since the family $(\phi_{\xsym{\ell}}^{s}, \phi_{\xsym{\ell}}^{c})_{\xsym{\ell} \in \mathcal{K}}$ is observable, the relation \eqref{equation:RepresentationViaObservableFamily} yields
		\begin{equation}\label{equation:ConvectiveTermOrthz1}
			\left\langle (s_{\xsym{\ell}} \xsym{\ell}^{\perp} \cdot \xnab) g, z_1 \right\rangle_m = \left\langle (c_{\xsym{\ell}} \xsym{\ell}^{\perp}  \cdot \xnab) g, z_1 \right\rangle_m = 0, \quad \xsym{\ell} \in \mathcal{K}.
		\end{equation}	
		Now, we take any $\xi \in \mathcal{H}_1(\mathcal{K})$ and assume without loss of generality that either $\xi = s_{\xsym{\ell}_1+\xsym{\ell}_2}$ or~$\xi = c_{\xsym{\ell}_1+\xsym{\ell}_2}$ whenever $\xsym{\ell}_1, \xsym{\ell}_2 \in \mathcal{K}$ with $\xsym{\ell}_1 \nparallel \xsym{\ell}_2$. In view of the trigonometric identities
		\begin{equation}\label{equation:ti}
			s_{\xsym{\ell}_1+\xsym{\ell}_2} = s_{\xsym{\ell}_1} c_{\xsym{\ell}_2}+c_{\xsym{\ell}_1}s_{\xsym{\ell}_2}, \quad
			c_{\xsym{\ell}_1+\xsym{\ell}_2} = c_{\xsym{\ell}_1} c_{\xsym{\ell}_2}-s_{\xsym{\ell}_1}s_{\xsym{\ell}_2},
		\end{equation}
		the function $\xi$ admits one of the two representations
		\[
			\xi = s_{\xsym{\ell}_1}c_{\xsym{\ell}_2} + c_{\xsym{\ell}_1}s_{\xsym{\ell}_2}, \quad \xi = c_{\xsym{\ell}_1}c_{\xsym{\ell}_2} - s_{\xsym{\ell}_1}s_{\xsym{\ell}_2}.
		\]
		Then, by choosing $g = c_{\xsym{\ell}_1}$ or $g = s_{\xsym{\ell}_1}$ in \eqref{equation:ConvectiveTermOrthz1}, we obtain the relation $\langle \xi, z \rangle_m = 0$, which implies \eqref{equation:OrthogonalityRelationzi} for $i = 1$ due to the arbitrariness of $\xi \in \mathcal{H}_1(\mathcal{K})$. 
		
		\paragraph{Step 3. Induction step.} With the intention of closing the induction argument, we assume now for arbitrarily fixed $i \in \mathbb{N}$ that
		\begin{equation}\label{equation:orthogonalityHc2}
			\langle g, z \rangle_m = 0, \quad g \in \mathcal{H}_i(\mathcal{K}).
		\end{equation}
		By analysis similar to the previous step, the statement in \eqref{equation:orthogonalityHc2} leads to
		\begin{equation}\label{equation:ConvectiveTermOrthz2b}
			\left\langle (s_{\xsym{\ell}} \xsym{\ell}^{\perp}  \cdot \xnab) g, z_1 \right\rangle_m = \left\langle (c_{\xsym{\ell}} \xsym{\ell}^{\perp}  \cdot \xnab) g, z_1 \right\rangle_m = 0, \quad \xsym{\ell} \in \mathcal{K}, \quad g \in \mathcal{H}_i(\mathcal{K}).
		\end{equation}	
		It remains to show that $z_1$ is orthogonal to $\mathcal{H}_{i+1}(\mathcal{K})$.  To~this end, let~$(\mathcal{K}_j)_{j \in \mathbb{N}_0}$ be defined as in \eqref{EE_j}. Using \eqref{equation:ConvectiveTermOrthz2b} with~$\xsym{\ell}=\xsym{\ell}_2\in \mathcal{K}$ and $g=s_{\xsym{\ell}_1}$ or $g=c_{\xsym{\ell}_1}$ for $\xsym{\ell}_1\in \mathcal{K}_i$ such that 
		$\xsym{\ell}_1 \nparallel \xsym{\ell}_2$, and by employing trigonometric identities of the type \eqref{equation:ti}, we observe that $z$ is orthogonal to $s_{\xsym{\ell}}$ and $c_{\xsym{\ell}}$ with any $\xsym{\ell}\in \mathcal{K}_{i+1}$. This implies that~$\langle g, z \rangle_m = 0$ for all $g \in \mathcal{H}_{i+1}(\mathcal{K})$
		and completes the proof.
	\end{proof}
	
	\begin{rmrk}\label{remark:rinv}
		Given $\varepsilon > 0$ and a bounded set $\xB$ in $\xHn{{m}}$, there exists a bounded linear operator $\mathcal{C}_{\varepsilon}$ assigning to any $v_T$ from $\xB$ a control $g \in \xLtwo((0,T);\mathcal{H}(\mathcal{K}))$ such that the corresponding solution $v$ to \eqref{equation:EulerLinearizedNonlocalized} satisfies~$\|v(\cdot, T)-v_T\|_{m-1} < \varepsilon$. To see this, we introduce the resolving operator 
		\[
			\mathcal{A}\colon \xHn{{m}} \times \xLtwo((0,T),\mathcal{H}(\mathcal{K})) \longrightarrow \xCzero([0,T];\xHn{{m}})\cap\xWn{{1,2}}((0,T);\xHn{{m-1}})
		\]
		which associates with $v_0 \in \xHn{{m}}$ and $g \in \xLtwo((0,T);\mathcal{H}(\mathcal{K}))$ the solution $v$ to the problem
		\begin{equation*}\label{equation:EulerLinearizedNonlocalizedB}
			\partial_t v + (\overline{\xvec{y}}^{\star} \cdot \xnab) v = g,
			\quad v(\cdot, 0) = v_0,
		\end{equation*}
		and further denote by $\mathcal{A}_T$ the restriction of $\mathcal{A}$ to the time $t = T$. As a result of \Cref{lemma:FiniteDimensionalControlsTransport}, the range of the linear operator $g \mapsto\mathcal{A}_T(0, g)$ is dense in $\xHn{{m}}$. Therefore, by \cite[Proposition 2.6]{KuksinNersesyanShirikyan2020} (see also the proof of Theorem 2.3 in \cite{Nersesyan2021}), there exists a bounded linear approximate right inverse $\mathcal{C}_{\varepsilon}$ for $\mathcal{A}_T(0, \cdot)$, which is as desired.
	\end{rmrk}

	\section{Transport with return method drift: localized degenerate controls}\label{section:localizedcontrols}
	The objective of this section is to fix a special vector field $\overline{\xvec{y}}$ so that all of its integral curves cross the set $\omegaup$ selected in \Cref{section:introduction}, and then to obtain a controllability result for the linear transport equation
	\begin{equation}\label{equation:previewfdc}
		\partial_t v + (\overline{\xvec{y}} \cdot \xnab) v = \mathbb{I}_{\omegaup}\widetilde{\eta},
	\end{equation}
	where $\widetilde{\eta}$ shall be a finitely decomposable control that is physically localized in $\omegaup$. 
	
	\subsection{Open covering by overlapping squares}\label{subsection:opencoveringoverlapping}
	Let us recall that $\omegaup$ is the nonempty subdomain of~$\Omega$ fixed in \Cref{subsection:controllabilityproblem}. Moreover, we take $\xvec{p}_{\omegaup} \in \mathbb{T}^2$ and fix a possibly large square number $K = K(\omegaup) \in \mathbb{N}$ such that
	\begin{equation*}\label{equation:lengthcond}
		\xvec{p}_{\omegaup} + [0, l_K]^2 \subset \omegaup, \quad l_K \coloneq \frac{2\pi}{\sqrt{K}-1}.
	\end{equation*}
	In particular, we denote by $\mathcal{O} \coloneq \xvec{p}_{\omegaup} + (0,l_K)^2$ a reference square. Then, as indicated in~\Cref{Figure:Covering}, we cover~$\mathbb{T}^2$ with overlapping squares $(\mathcal{O}_i)_{i\in\{1,\dots,K\}}$, each being a rigid translation of $(0, l_K)^2$. For the sake of explicitness, we take the bottom left corners of $\mathcal{O}_1, \dots, \mathcal{O}_K$ as $\xvec{x}_k = [x_{k,1}, x_{k,2}]^{\top}$, $k \in \{1,\dots,K\}$, with the coordinates
	\begin{equation}\label{equation:coords}
		x_{i + \sqrt{K}(l - 1),1} = \frac{2\pi(i-1)}{\sqrt{K}}, \quad x_{i + \sqrt{K}(l - 1),2} = \frac{2\pi(l-1)}{\sqrt{K}}, \quad i,l = 1, \dots, \sqrt{K}.
	\end{equation}

	\begin{figure}[ht!]
		\centering
		\resizebox{0.62\textwidth}{!}{
			\begin{tikzpicture}
				\clip(-0.5,-0.1) rectangle (5.4,5);
				
				\draw[line width=0.4mm, dashed, color=CornflowerBlue!120] plot[smooth cycle] (1.3,1.3) rectangle (4.4,4.4);
				\draw[line width=0.1mm, color=Black] plot[smooth cycle] (0.25,0) -- (0.25,1);
				\draw[line width=0.1mm, color=Black] plot[smooth cycle] (0,0) rectangle (1,1);
				\draw[line width=0.1mm, color=Black] plot[smooth cycle] (0.75,0) rectangle (1.75,1);
				\draw[line width=0.1mm, color=Black] plot[smooth cycle] (1.5,0) rectangle (2.5,1);
				\draw[line width=0.1mm, color=Black] plot[smooth cycle] (2.25,0) rectangle (3.25,1);
				\draw[line width=0.1mm, color=Black] plot[smooth cycle] (3,0) rectangle (4,1);
				\draw[line width=0.1mm, color=Black] plot[smooth cycle] (4,0) -- (4,1);
				\draw[line width=0.1mm, color=Black] plot[smooth cycle] (3.75,0) rectangle (4.75,1);
				\draw[line width=0.1mm, color=Black] plot[smooth cycle] (0.25,0.75) -- (0.25,1.75);
				\draw[line width=0.1mm, color=Black] plot[smooth cycle] (0,0.75) rectangle (1,1.75);
				\draw[line width=0.1mm, color=Black] plot[smooth cycle] (0.75,0.75) rectangle (1.75,1.75);
				\draw[line width=0.1mm, color=Black] plot[smooth cycle] (0,1.5) rectangle (1,2.5);
				\draw[line width=0.1mm, color=Black] plot[smooth cycle] (0,2.25) rectangle (1,3.25);
				\draw[line width=0.1mm, color=Black] plot[smooth cycle] (0,3) rectangle (1,4);
				\draw[line width=0.1mm, color=Black] plot[smooth cycle] (0,3.75) rectangle (1,4.75);
				\draw[line width=0.1mm, color=Black] plot[smooth cycle] (0,0.25) -- (1,0.25);
				\draw[line width=0.4mm, color=FireBrick] plot[smooth cycle] (2.35,2.35) rectangle (3.35,3.35);
				\draw[line width=0.2mm, fill=FireBrick, color=black] plot[smooth cycle]  (2.35,2.35) circle (0.06);
				\draw[line width=0.1mm, color=Black, opacity=0, postaction={pattern=crosshatch,opacity=0.2}] plot[smooth cycle] (4.5,0) rectangle (4.75,1);
				\draw[line width=0.1mm, color=Black, opacity=0, postaction={pattern=crosshatch,opacity=0.2}] plot[smooth cycle] (0,0) rectangle (0.25,1);
				\draw[line width=0.1mm, color=Black, opacity=0, postaction={pattern=dots,opacity=0.8}] plot[smooth cycle] (0,0) rectangle (1,0.25);
				\draw[line width=0.1mm, color=Black, opacity=0, postaction={pattern=dots,opacity=0.8}] plot[smooth cycle] (0,4.5) rectangle (1,4.75);
				
				\coordinate[label=below:{\scriptsize$\xvec{p}_{\omegaup}$}] (A) at (2.35,2.35);
				\coordinate[label=below:\color{FireBrick}\scriptsize$\mathcal{O}$] (B) at (2.85,3.07);
			\end{tikzpicture}
		}
		\caption{An illustration of the chosen covering of $\mathbb{T}^2$, here by $K = 36$ overlapping squares. Overlaps due to periodicity are filled with a corresponding pattern, and only a few squares are depicted. The (red) reference square $\mathcal{O}$ is located in the control region (indicated by blue dashes) and $\xvec{p}_{\omegaup}$ marks its bottom left corner.}
		\label{Figure:Covering}
	\end{figure}
	
	\subsection{Partition of unity}\label{subsection:partitionOfunity}
	We introduce a partition of unity with respect to $(\mathcal{O}_l)_{l\in\{1,\dots,K\}}$ arising from rigid translations of a single cutoff function. To begin with, let $\widetilde{\mu} \in \xCinfty(\mathbb{T};[0,1])$ have the attributes (\cf~\Cref{example:pfu})
	\begin{equation}\label{equation:cfop1}
		\operatorname{supp}(\widetilde{\mu}) \subset (0,l_K), \quad \forall x \in \mathbb{T}\colon \sum_{l=1}^{\sqrt{K}} \widetilde{\mu}\left(x + \frac{2\pi(l-1)}{\sqrt{K}}\right) = 1.
	\end{equation}
	Thereafter, we define the cutoff functions $\mu,\chi \in \xCinfty(\mathbb{T}^2;[0,1])$ by virtue of
	\begin{equation}\label{equation:Definition_chi}
		\mu(\xvec{x}) \coloneq \widetilde{\mu}(x_1) \widetilde{\mu}(x_2), \quad \chi(\xvec{x}) \coloneq \mu(\xvec{x}-\xvec{p}_{\omegaup}), \quad \xvec{x} = [x_1,x_2]^{\top}\in\mathbb{T}^2.
	\end{equation}
	As a result, a partition of unity with respect to the open covering $(\mathcal{O}_l)_{l \in \{1,\dots,K\}}$ is given by the family of translations
	\begin{equation}\label{equation:TranslationsCutoff}
		\left(\mu_l \coloneq \mu\left(\cdot - \xvec{x}_l\right)\right)_{l=1,\dots,K} \subset \xCinfty(\mathbb{T}^2;[0,1]),
	\end{equation}
	where $\xvec{x}_1, \dots, \xvec{x}_K$ are given via \eqref{equation:coords}. In particular, one has $\sum_{l=1}^{K} \mu_l = 1$.

	\begin{xmpl}\label{example:pfu}
		For $K \geq 9$, take any $\widehat{\mu} \in \xCinfty_0((0, l_K - 1/2K);[0,1])$ obeying $\widehat{\mu}(s) = 1$ if and only if $s \in [2\pi/(K-\sqrt{K}), l_K - 1/K]$. Subsequently, a cutoff $\widetilde{\mu} \in \xCinfty(\mathbb{T};[0,1])$ obeying \eqref{equation:cfop1} can be defined as
		\[
			\widetilde{\mu}(s) = \mathbb{I}_{\left[0,\frac{2\pi}{K-\sqrt{K}}\right]}(s) \widehat{\mu}(s) + \mathbb{I}_{\left(\frac{2\pi}{K-\sqrt{K}},\frac{2\pi}{\sqrt{K}}\right)}(s) + \mathbb{I}_{\left[\frac{2\pi}{\sqrt{K}},l_K\right]}(s) \left(1 - \widehat{\mu}\left(s - \frac{2\pi}{\sqrt{K}} \right)\right).
		\]
	\end{xmpl}

	\subsection{Convection strategy on the torus}\label{subsection:flushing}
	Inspired by classical applications of the return method to fluid problems (\cf~\cite[Part~2, Chapter 6.2]{Coron2007}), we shall introduce a vector field $\overline{\xvec{y}}$ along which information originating anywhere on the torus will eventually pass through the control region. To start with, the reference time interval $[0,1]$ is subdivided by the points
	\[	
		0 < T_a = t^0_c < t^1_a < t^1_b < t^1_c < t^2_a < t^2_b < t^2_c < \dots < t^K_a < 	t^K_b < t^K_c = T_b < 1,
	\]
	which are, for simplicity, chosen to be of equal distance denoted by $T^{\star} > 0$, that is
	\[
		t^l_a - t^{l-1}_c = t^l_c-t^l_b = t^l_b-t^l_a = T_a = 1 - T_b = T^{\star}, \quad l \in \{1,\dots,K\}.
	\]
	
	Now, the smooth vector field $\overline{\xvec{y}} \in \xCinfty_0((0,1); \mathbb{T}^2)$ is constructed such that~$\overline{\xvec{y}}$ together with its flow~$\xsym{\mathcal{Y}}$, obtained by solving the system of ordinary differential equations
	\begin{equation}\label{equation:FlowY}
		\xdrv{}{t} \xsym{\mathcal{Y}}(\xvec{x},s,t) = \overline{\xvec{y}}(t), \quad
		\xsym{\mathcal{Y}}(\xvec{x},s,s) = \xvec{x}, \quad s,t \in [0,1], 
	\end{equation}
	possess the following properties:
	\begin{enumerate}[P1)]
		\item\label{item:P1} $\overline{\xvec{y}}(t) = \xvec{0}$ for all $t \in [0, T_a] \cup [T_b, 1]$;
		\item\label{item:P2} $\xsym{\mathcal{Y}}(\xvec{x},0,1) = \xvec{x}$ for all $\xvec{x} \in \mathbb{T}^2$;
		\item\label{item:P3} each square $\mathcal{O}_l$ is transported by $\xsym{\mathcal{Y}}$ into $\mathcal{O} \subset \omegaup$ and pauses there during $[t^l_a, t^l_b]$, that is
		\[
			\forall l \in \{1,\dots,K\},\,  \forall t \in [t^l_a, t^l_b] \colon \,\xsym{\mathcal{Y}}(\mathcal{O}_l, 0, t) = \mathcal{O}.
		\]
	\end{enumerate}

	\begin{figure}[ht!]
		\centering
		\resizebox{1\textwidth}{!}{
			\begin{subfigure}[b]{0.5\textwidth}
				\centering
				\resizebox{0.99\textwidth}{!}{
					\begin{tikzpicture}
						\clip(-0.2,-0.2) rectangle (9.2,9.2);
						
						\draw[line width=0.8mm, dashed, color=CornflowerBlue!120] plot[smooth cycle] (3,2.9) rectangle ++(3.4,3.3);
						\coordinate[label=left:\Large\color{CornflowerBlue}${\omegaup}$] (A) at (5.1,4.5);

						\draw[line width=0.3mm, color=Black] plot[smooth cycle] (1.2,1.2) rectangle (2.4,2.4);
						\coordinate[label=left:\Large${\mathcal{O}_i}$] (D1) at (2.3,1.8);
						\draw[->,dashed,line width=1mm] (2.4,1.8) -- (4.1,1.8);
						\draw[line width=0.3mm, color=Black, opacity=0.4] plot[smooth cycle] 	(4.1,1.2) rectangle (5.3,2.4);
						\draw[line width=0.3mm, color=Black] plot[smooth cycle] (6.8,7.3) rectangle (8,8.5);
						\coordinate[label=left:\Large${\mathcal{O}_j}$] (D2) at (8,7.8);
						\draw[line width=0.3mm, color=Black, opacity=0.4] plot[smooth cycle] 	(0.7,7.3) rectangle (1.9,8.5);
						\draw[dashed,line width=1mm] (8,7.9) -- (9,7.9);
						\draw[->, dashed,line width=1mm] (0,7.9) -- (0.7,7.9);
						
						
						\draw[line width=0.2mm, fill=FireBrick, color=black] plot[smooth cycle]  (4.1,3.9) circle (0.1);
						\coordinate[label=below:{\Large$\xvec{p}_{\omegaup}$}] (AAA) at (3.9,3.9);
						
						\draw[line width=0.2mm, color=black, dashed, dash pattern=on 8pt off 8pt]	plot[smooth cycle] (0,0) rectangle ++(9,9);
						
					\end{tikzpicture}
				}
				\caption{Shifting $\mathbb{T}^2$ horizontally such that the bottom left corner of~$\mathcal{O}_i$ moves to the $x_1$ coordinate of $\xvec{p}_{\omegaup}$.}
				\label{Figure:ControlInAStripIntersection1}
			\end{subfigure}
			\hfill \quad \hfill
			\begin{subfigure}[b]{0.5\textwidth}
				\centering
				\resizebox{0.99\textwidth}{!}{
					\begin{tikzpicture}
						\clip(-0.2,-0.2) rectangle (9.2,9.2);
						
						\draw[line width=0.8mm, dashed, color=CornflowerBlue!120] plot[smooth cycle] (3,2.9) rectangle ++(3.4,3.3);
						
						\draw[line width=0.3mm, color=Black] plot[smooth cycle] (4.1,1.2) rectangle (5.3,2.4);
						\draw[->,dashed,line width=1mm] (4.7,2.4) -- (4.7,3.9);
						\draw[line width=0.3mm, color=FireBrick, opacity=0.8] plot[smooth cycle] 	(4.1,3.9) rectangle (5.3,5.1);
						\draw[line width=0.3mm, color=Black] plot[smooth cycle] (0.7,7.3) rectangle (1.9,8.5);
						\draw[dashed,line width=1mm] (1.3,8.5) -- (1.3,9);
						\draw[->, dashed,line width=1mm] (1.3,0) -- (1.3,1);
						\draw[line width=0.3mm, color=Black, opacity=0.4] plot[smooth cycle] 	(0.7,1) rectangle (1.9,2.2);
						\coordinate[label=left:\Large${\color{FireBrick}\mathcal{O}}$] (D2) at (5.1,4.48);
						
						\draw[line width=0.2mm, fill=FireBrick, color=black] plot[smooth cycle]  (4.1,3.9) circle (0.1);
						\coordinate[label=below:{\Large$\xvec{p}_{\omegaup}$}] (AAA) at (3.9,3.9);
						
						\draw[line width=0.2mm, color=black, dashed, dash pattern=on 8pt off 8pt]	plot[smooth cycle] (0,0) rectangle ++(9,9);
						
					\end{tikzpicture}
				}
				\caption{Shifting $\mathbb{T}^2$ vertically so that the $x_2$ coordinate of the bottom left corner of $\mathcal{O}_i$ is translated to the $x_2$ coordinate of $\xvec{p}_{\omegaup}$.}
				\label{Figure:ControlInAStripIntersection2}
			\end{subfigure}
		}
		\caption{The vector field $\overline{\xvec{y}}$, which is a function only of time, is constructed by a sequence of horizontal and vertical shifts of the whole torus $\mathbb{T}^2$. }
		\label{Figure:ConstructionReturnmethodFlow}
	\end{figure}
	
	\begin{thrm}\label{theorem:returnmethodflow}
		There exists $\overline{\xvec{y}} \in \xCinfty_0((0,1); \mathbb{T}^2)$ satisfying the properties {\rm P}\Rref{item:P1}--{\rm P}\Rref{item:P3}.
	\end{thrm}
	\begin{proof}
		The proof is based on translations of~$\mathbb{T}^2$, as illustrated in \Cref{Figure:ConstructionReturnmethodFlow}. One could also use other translations such as shifts in the direction of $\xvec{p}_{\omegaup} - \xvec{x}_l$ for $l \in \{1,\dots,K\}$. To begin with, for each $i \in \{1,\dots,K\}$ we fix $h_i, v_i \in \xCinfty_0((0, T^{\star}/2); \mathbb{R})$ with
		\[
			\left(x_{i,1} + \int_0^{T^{\star}/2} h_i(s) \, \xdx{s}\right)\xvec{e}_1 + \left(x_{i,2} + \int_0^{T^{\star}/2} v_i(s) \, \xdx{s}\right)\xvec{e}_2 = \xvec{p}_{\omegaup},
		\]
		where $\xvec{e}_1 \coloneq [1,0]^{\top}$ and $\xvec{e}_2 \coloneq [0,1]^{\top}$ are the standard basis vectors of $\mathbb{R}^2$. Then, we introduce $\widehat{\xsym{\Theta}}_i \in \xCinfty_0((0,3T^{\star}); \mathbb{R}^2)$ via
		\begin{equation*}
			\widehat{\xsym{\Theta}}_i(t) \coloneq \begin{cases}
			\widetilde{\xsym{\Theta}}_i(t) &  \mbox{ if } t \in [0, T^{\star}],\\
			\xsym{0} &  \mbox{ if } t \in [T^{\star},2T^{\star}],\\
			-\widetilde{\xsym{\Theta}}_i(t-2T^{\star}) & \mbox{ if } t \in (2T^{\star}, 3T^{\star}],
			\end{cases}
		\end{equation*}
		where $\widetilde{\xsym{\Theta}}_i \in \xCinfty_0((0, T^{\star}); \mathbb{R}^2)$ is given by
		\[
			\widetilde{\xsym{\Theta}}_i(t) \coloneq \begin{cases}
				h_i(t) \xvec{e}_1 &  \mbox{ if } t \in [0, T^{\star}/2],\\
				v_i(t-T^{\star}/2) \xvec{e}_2 & \mbox{ if } t \in (T^{\star}/2, T^{\star}].
			\end{cases} 
		\]
		Finally, the spatially constant vector field $\overline{\xvec{y}} \in \xCinfty_0((0,1); \mathbb{T}^2)$ defined as
		\[
			\overline{\xvec{y}}(t) \coloneq \begin{cases}
				\xsym{0} & \mbox{ if } t \in [0, T_a),\\
				\widehat{\xsym{\Theta}}_i(t - (3i-2)T^{\star}), &  \mbox{ if } t \in [(3i-2)T^{\star}, 	(3i+1)T^{\star}],\\
				\xsym{0} & \mbox{ if } t \in (T_b, 1)
			\end{cases}
		\]
		satisfies the properties {\rm P}\Rref{item:P1}--{\rm P}\Rref{item:P3}.
	\end{proof}
	From now on, let $\overline{\xvec{y}} \in \xCinfty_0((0, 1);\mathbb{R}^2)$ be fixed via \Cref{theorem:returnmethodflow} and denote by~$\xsym{\mathcal{Y}}$ its flow, which obeys \eqref{equation:FlowY}. This choice only depends on the geometry of~$\omegaup$; in particular,~$\overline{\xvec{y}}$ is independent of the data imposed in \Cref{theorem:main0_vel}. Next, a divergence-free vector field $\overline{\xvec{u}} \in \xWn{{1,2}}((0,1);\xCinfty(\mathbb{T}^2;\mathbb{R}^2))$ is introduced by
	\begin{equation}\label{equation:Definitionybarstarubar}
		\begin{aligned}
			\overline{\xvec{u}}(\cdot,t) \coloneq \overline{\xvec{y}}(t) + \mathbb{I}_{[T_b, 1]}(t)\overline{\xvec{y}}^{\star}(\cdot,t - T_b),
		\end{aligned}
	\end{equation}
	where $\overline{\xvec{y}}^{\star}$ is the profile defined in \eqref{equation:DefinitionYStar} for the choice $T = T^{\star}$ and a fixed finite saturating set $\mathcal{K} \subset \mathbb{Z}_{*}^2$. The flow $\xsym{\mathcal{U}}$ of $\overline{\xvec{u}}$ is determined via
	\begin{equation}\label{equation:flowU}
		\begin{cases}
			\xdrv{}{t} \xsym{\mathcal{U}}(\xvec{x},s,t) = \overline{\xvec{u}}(\xsym{\mathcal{U}}(\xvec{x},s,t), t),\\
			\xsym{\mathcal{U}}(\xvec{x},s,s) = \xvec{x}.
		\end{cases}
	\end{equation}

	\subsection{Localizing the controls}\label{subsection:localizingctrl}
	We state now a controllability result for \eqref{equation:previewfdc} by means of a physically localized finitely decomposable control $\widetilde{\eta} = \widetilde{\eta}_{(\zeta_{\xsym{\ell}}^s,\zeta_{\xsym{\ell}}^c)_{\xsym{\ell}\in\mathcal{K}}}$ of the form
	\begin{equation}\label{equation:ExplicitControl2}
		\begin{aligned}
			\widetilde{\eta}(\xvec{x},t) = \widehat{\eta}(\xvec{x},t) - \frac{(\overline{\xvec{y}}(t) \cdot \xnab){\chi}(\xvec{x})\int_0^t \int_{\mathbb{T}^2} \widehat{\eta}(\xvec{z}, s) \, \xdx{\xvec{z}} \xdx{s}}{\int_{\mathbb{T}^2} \chi(\xvec{z}) \, \xdx{\xvec{z}}} - \frac{{\chi}(\xvec{x})\int_{\mathbb{T}^2} \widehat{\eta}(\xvec{z}, t) \, \xdx{\xvec{z}}}{\int_{\mathbb{T}^2} \chi(\xvec{z}) \, \xdx{\xvec{z}}},
		\end{aligned}
	\end{equation}
	where $\widehat{\eta} = 	\widehat{\eta}_{(\zeta_{\xsym{\ell}}^s,\zeta_{\xsym{\ell}}^c)_{\xsym{\ell}\in\mathcal{K}}}$ is for control parameters $(\zeta_{\xsym{\ell}}^s,\zeta_{\xsym{\ell}}^c)_{\xsym{\ell}\in\mathcal{K}} \subset \xLtwo((0,1);\mathbb{R})$ given by
	\begin{equation}\label{equation:ExplicitControl3}
		\widehat{\eta}(\xvec{x},t) = \chi(\xvec{x})\sum_{j=1}^K 
		\mathbb{I}_{[t^j_a,t^j_b]}(t)  \sum_{\xsym{\ell} \in \mathcal{K}} \left[\zeta_{\xsym{\ell}}^s(\tau_j(t)) s_{\xsym{\ell}} \left(\xsym{\Xi}(\xvec{x},t) \right) + \zeta_{\xsym{\ell}}^c(\tau_j(t)) c_{\xsym{\ell}}\left( \xsym{\Xi}(\xvec{x},t)\right)\right],
	\end{equation}
	with $s_{\xsym{\ell}}$ and $c_{\xsym{\ell}}$ being the trigonometric functions from \eqref{equation:deftrigfam}, $\chi$ the cutoff obeying $\operatorname{supp}(\chi) \subset \omegaup$ given in \eqref{equation:Definition_chi}, $\mathcal{K} \subset \mathbb{Z}_{*}^2$ the finite saturating set from \Cref{subsection:flushing}, and
	\begin{gather}\label{equation:DefinitionPhiPsi}
			\xsym{\Xi}(\xvec{x},t) = [\Phi(\xvec{x},t),\Psi(\xvec{x},t)]^{\top}  \coloneq \xsym{\mathcal{U}}\left(\xsym{\mathcal{Y}}(\xvec{x},t,0), 1, \overline{\tau}(t)\right),\\
			\overline{\tau}(t) \coloneq \sum_{l=1}^{K} \tau_l(t), \quad \tau_l(t) \coloneq \mathbb{I}_{[t^l_a,t^l_b]}(t)\left(T_b + t-t^l_a \right)\label{equation:Definition_taul}
	\end{gather}
	for the flows $\xsym{\mathcal{Y}}$ and $\xsym{\mathcal{U}}$ from \Cref{subsection:flushing}.
	
	Let us briefly comment on these definitions, emphasizing that our particular choice of $\widetilde{\eta}_{(\zeta_{\xsym{\ell}}^s,\zeta_{\xsym{\ell}}^c)_{\xsym{\ell}}}$ will be motivated during the proof of \Cref{theorem:localized} below. 
	1) In the proof of \Cref{theorem:localized}, we shall at first use $\widehat{\eta}$ as control force. However, as $\widehat{\eta}$ is not average-free, and thus cannot enter the vorticity formulation of the Navier--Stokes system, we will eventually explain how $\widetilde{\eta}$ arises as a suitable average-free control from $\widehat{\eta}$.
	2) The composed flow $\xsym{\Xi}$ in \eqref{equation:DefinitionPhiPsi} shall play a key role in localizing finite-dimensional controls from \Cref{section:transportobservabledrift}. Indeed, the presence of $\xsym{\Xi}$ as a change of variables will allow us to translate between the convection mechanisms induced by the generating drift $\overline{\xvec{y}}^{\star}$ in \eqref{equation:TransportFC} and the return method drift $\overline{\xvec{y}}$ in \eqref{equation:previewfdc}. This change of drift profiles is important for our approach: $\overline{\xvec{y}}^{\star}$ is suitable for obtaining finite dimensional controls, but many of its integral curves might not cross the control region; on the other hand, one could not get finite-dimensional controls for transport equations with drift $\overline{\xvec{y}}$, but it channels all information in a well-ordered manner through $\omegaup$.

	\begin{thrm}\label{theorem:localized}
		Given $m \in \mathbb{N}$, $\varepsilon > 0$, and $v_1 \in \xHn{{m}}$, there exist $(\zeta_{\xsym{\ell}}^s,\zeta_{\xsym{\ell}}^c)_{\xsym{\ell}\in\mathcal{K}} \subset \xLtwo((0,1);\mathbb{R})$ such that the solution $v \in \xCzero([0,1];\xHn{{m}})\cap\xWn{{1,2}}((0,1);\xHn{{m-1}})$ to the transport equation
		\begin{equation}\label{equation:EulerLinearizedlocalized}
			\partial_t v + (\overline{\xvec{y}} \cdot \xnab) v = \widetilde{\eta}_{(\zeta_{\xsym{\ell}}^s,\zeta_{\xsym{\ell}}^c)_{\xsym{\ell}\in\mathcal{K}}}, \quad 
			v(\cdot, 0) = 0
		\end{equation}
		obeys
		\begin{equation}\label{equation:LinApproxEst}
			\|v(\cdot,1)-v_1\|_{m} < \varepsilon.
		\end{equation}
	\end{thrm}

	\begin{proof}
		A rough outline of the proof is as follows. First, we shall start with a controlled solution $V$ to the transport problem
		\[
			\partial_t V + (\overline{\xvec{u}} \cdot \xnab) V = \mathbb{I}_{[T_b,1]}(t)g, \quad 
			V(\cdot, 0) = 0,
		\]
		where $g$ is a finite-dimensional control from \Cref{section:transportobservabledrift} and $\overline{\xvec{u}}$ is the vector field defined in \eqref{equation:Definitionybarstarubar}. Then, by employing the method of characteristics, we have
		\begin{equation}\label{equation:exp}
			V(\xvec{x},1) = \int_0^1 (\mathbb{I}_{[T_b,1]}g)(\xsym{\mathcal{U}}(\xvec{x},1,s),s) \, \xdx{s}.
		\end{equation}
		The next step will be to explain how~$g$ can be replaced by a localized force $\widehat{\eta}$, and~$\xsym{\mathcal{U}}$ be replaced by $\xvec{\mathcal{Y}}$, without impacting the value of the integral in \eqref{equation:exp}. That is, we shall obtain a solution $\widehat{v}$ to
		\begin{equation*}
			\partial_t \widehat{v} + (\overline{\xvec{y}} \cdot \xnab) \widehat{v} = \mathbb{I}_{\omegaup}\widehat{\eta}, \quad 
			{v}(\cdot, 0) = 0
		\end{equation*}
		such that 
		\[
		\widehat{v}(\cdot,1) = \int_0^1 (\mathbb{I}_{\omegaup}\widehat{\eta})(\xsym{\mathcal{Y}}(\cdot,1,s),s) \, \xdx{s} = \int_0^1 (\mathbb{I}_{[T_b,1]}g)(\xsym{\mathcal{U}}(\cdot,1,s),s) \, \xdx{s} = \widetilde{v}(\cdot,1).
		\]
		Since $V(\cdot, 1)$ will by the choice of $g$ be made sufficiently close to the target state, the same shall be true for $\widehat{v}(\cdot, 1)$. Finally, the same target state will also be achieved using an average-free version~$\widetilde{\eta}$ of~$\widehat{\eta}$.
		
		\paragraph{Step 1. Determining the control parameters.}
		Let us recall from \eqref{equation:Definitionybarstarubar} that the vector field $\overline{\xvec{u}}(\cdot, t)$ equals to $\overline{\xvec{y}}(t)$ for $t \in [0, T_b)$. As $\overline{\xvec{y}}$ obeys the property~P\Rref{item:P2} stated in \Cref{subsection:flushing}, the solution to
		\begin{equation}\label{equation:EulerLinearizedNonlocalized}
			\partial_t V + (\overline{\xvec{u}} \cdot \xnab) V = \mathbb{I}_{[T_b,1]}(t)g,
			\quad V(\cdot, 0) = 0
		\end{equation}
		satisfies $V(\cdot, 0) = V(\cdot, T_b)$ regardless the choice of $g$. Thus, applying \Cref{lemma:FiniteDimensionalControlsTransport}  to \eqref{equation:EulerLinearizedNonlocalized} with $T = 1 -T_b$ provides a finite-dimensional control $g \in \xLtwo((0,1); \mathcal{H}(\mathcal{K}))$ such that the solution $V \in \xCzero([0,1];\xHn{{m}})\cap\xWn{{1,2}}((0,1);\xHn{{m-1}})$
		to the transport equation \eqref{equation:EulerLinearizedNonlocalized} satisfies
		\begin{equation}\label{equation:approxctrl_tildev}
			\|V(\cdot,1)-v_1\|_{m} < \varepsilon.
		\end{equation}
		A control $g$ having these properties is now fixed. In particular, recalling the definition of $\mathcal{H}(\mathcal{K})$ from \eqref{equation:HK}, one can represent~$g(\cdot, t)$ for each $t \in [0,1]$ by means of the linear combination
		\begin{equation}\label{equation:LinearCombination_g}
			g(\xvec{x},t) = \sum_{\xsym{\ell} \in \mathcal{K}} \left[\zeta_{\xsym{\ell}}^s(t) s_{\xsym{\ell}}(\xvec{x}) + \zeta_{\xsym{\ell}}^c(t) c_{\xsym{\ell}}(\xvec{x})\right], \quad \xvec{x} \in \mathbb{T}^2
		\end{equation}
		with uniquely determined coefficients $(\zeta_{\xsym{\ell}}^s,\zeta_{\xsym{\ell}}^c)_{\xsym{\ell}\in\mathcal{K}} \subset \xLtwo((0,1);\mathbb{R})$.

		\paragraph{Step 2. Localization.}
		Given the parameters $(\zeta_{\xsym{\ell}}^s,\zeta_{\xsym{\ell}}^c)_{\xsym{\ell}\in\mathcal{K}}$ determined during the previous step, let $\widehat{v}$ be the solution to the version of \eqref{equation:EulerLinearizedlocalized} where the right-hand side is  the auxiliary force $\widehat{\eta}_{(\zeta_{\xsym{\ell}}^s,\zeta_{\xsym{\ell}}^c)_{\xsym{\ell}\in\mathcal{K}}}$ defined in \eqref{equation:ExplicitControl3}, that is
		\begin{equation}\label{equation:tempwithcontroletahat}
			\partial_t \widehat{v} + (\overline{\xvec{y}} \cdot \xnab) \widehat{v} = \widehat{\eta}_{(\zeta_{\xsym{\ell}}^s,\zeta_{\xsym{\ell}}^c)_{\xsym{\ell}\in\mathcal{K}}}, \quad 
			\widehat{v}(\cdot, 0) = 0.
		\end{equation}
		As $\xsym{\mathcal{Y}}$ denotes the flow associated via \eqref{equation:FlowY} with $\overline{\xvec{y}}$, the property P\Rref{item:P2} from \Cref{subsection:flushing}, together with the method of characteristics, gives rise to the representation
		\begin{equation}\label{equation:SolutionFormula_vhat}
			\widehat{v}(\xvec{x},1) = \widehat{v}(\xsym{\mathcal{Y}}(\xvec{x},0,1),1) = \int_0^1 \widehat{\eta}_{(\zeta_{\xsym{\ell}}^s,\zeta_{\xsym{\ell}}^c)_{\xsym{\ell}\in\mathcal{K}}}(\xsym{\mathcal{Y}}(\xvec{x},0,s),s) \, \xdx{s}, \quad \xvec{x} \in \mathbb{T}^2.
		\end{equation}
		Using this representation, we now show $\widehat{v}(\cdot, 1) = V(\cdot, 1)$, which then implies $\|\widehat{v}(\cdot,1)-v_1\|_{m} < \varepsilon$ via \eqref{equation:approxctrl_tildev}.
		Hereto, we take any $\xvec{x} \in \mathbb{T}^2$ and write $\xvec{q} \coloneq \xsym{\mathcal{U}}(\xvec{x},0,1)$, noting that the flow $\xsym{\mathcal{U}}$ is defined in \eqref{equation:flowU}. 
		Employing the partition of unity $(\mu_l)_{l \in \{1,\dots,K\}}$ from \eqref{equation:TranslationsCutoff}, and the solution formula for \eqref{equation:EulerLinearizedNonlocalized}, it follows that
		\begin{equation}\label{equation:SolutionFormula_vtilde}
			\begin{aligned}
				V(\xvec{q},1) & = \int_0^1 \mathbb{I}_{[T_b, 1]}(s)	g(\xsym{\mathcal{U}}(\xvec{x},0,s),s) \, \xdx{s} = \int_{T_b}^1 g(\xsym{\mathcal{U}}(\xvec{x},0,s),s) \, \xdx{s}\\
				& = \sum_{l=1}^K\int_{T_b}^1 \mu_{l}(\xvec{q}) 	g(\xsym{\mathcal{U}}(\xvec{x},0,s),s) \, \xdx{s}.
			\end{aligned}
		\end{equation}
		In view of \eqref{equation:Definition_taul}, a change of variables yields
			\begin{equation}\label{equation:ChangeOfVariables}
				\begin{aligned}
					&\int_{T_b}^1 \mu_{l}(\xvec{q}) g(\xsym{\mathcal{U}}(\xvec{x},0,s),s) \, \xdx{s} \\
					& \quad = \int_{t^{l}_a}^{t^{l}_b} \mu_l(\xvec{q}) g\left(\xsym{\mathcal{U}}\left(\xvec{x},0,\left(T_b + s-t^{l}_a \right)\right),\left(T_b + s-t^{l}_a\right)\right) \, \xdx{s},\\
					& \quad = \int_{t^{l}_a}^{t^{l}_b} \mu_l(\xvec{q}) g\left(\xsym{\mathcal{U}}\left(\xvec{x},0,\tau_{l}(s)\right),\tau_{l}(s)\right) \, \xdx{s}
				\end{aligned}
			\end{equation}
			for each $l \in \{1,\dots,K\}$. By further utilizing in \eqref{equation:ChangeOfVariables} the property P\Rref{item:P3} of $\overline{\xvec{y}}$ and the corresponding flow $\xsym{\mathcal{Y}}$, while recalling the definition of $\chi$ from \eqref{equation:Definition_chi}, one has
			\begin{multline}\label{equation:UsingPropertyP3ToGet_chi}
				\int_{T_b}^1 \mu_{l}(\xvec{q}) g(\xsym{\mathcal{U}}(\xvec{x},0,s),s) \, \xdx{s} \\ = \int_{t^{l}_a}^{t^{l}_b} \chi(\xsym{\mathcal{Y}}(\xvec{q},0,s))g\left(\xsym{\mathcal{U}}\left(\xvec{x}, 0, \tau_{l}(s)\right), \tau_{l}(s)\right) \, \xdx{s}
			\end{multline}
			for each $l \in \{1,\dots,K\}$; see also (\cf~\Cref{Figure:AnalyzingCharacteristics}). Thus, a combination of \eqref{equation:SolutionFormula_vtilde} and \eqref{equation:UsingPropertyP3ToGet_chi} leads to
			\begin{equation}\label{equation:SneakInU}
				\begin{aligned}
					V(\xvec{q},1) = \sum_{l=1}^K \int_{t^{l}_a}^{t^{l}_b} \chi(\xsym{\mathcal{Y}}(\xvec{q},0,s))g\left(\xsym{\mathcal{U}}\left(\xvec{x}, 0, \tau_{l}(s)\right), \tau_{l}(s)\right) \, \xdx{s}.
				\end{aligned}
			\end{equation}
			Since $\xsym{\mathcal{U}}(\cdot,s,t)$ is for each $s,t \in [0,1]$ a homeomorphism of $\mathbb{T}^2$, and by employing \cref{equation:SneakInU,equation:DefinitionPhiPsi,equation:LinearCombination_g}, we infer that
			\begin{equation}\label{equation:TransformationsAlongCharacertistics}
				\begin{aligned}
					V(\xvec{x},1) = \int_0^1 \widehat{\eta}_{(\zeta_{\xsym{\ell}}^s,\zeta_{\xsym{\ell}}^c)_{\xsym{\ell}\in\mathcal{K}}}(\xsym{\mathcal{Y}}(\xvec{x},0,s),s) \, \xdx{s}
				\end{aligned}
			\end{equation}
			for all $\xvec{x} \in \mathbb{T}^2$. Owing to the representation \eqref{equation:SolutionFormula_vhat}, it follows that $\widehat{v}(\cdot, 1) = V(\cdot, 1)$, and thus $\|\widehat{v}(\cdot,1)-v_1\|_{m} < \varepsilon$. 
		
		\begin{figure}[ht!]
			\centering
			\resizebox{0.6\textwidth}{!}{
				\begin{tikzpicture}
					\clip(-0.4,-0.8) rectangle (5.8,3.5);
					
					\draw[line width=0.2mm, color=darkgray] plot[smooth cycle] (0,0) rectangle (0.6,0.6);
					\draw[line width=0.2mm, color=darkgray] plot[smooth cycle] (0,0.45) rectangle (0.6,1.05);
					\draw[line width=0.2mm, color=darkgray] plot[smooth cycle] (0.45,0) rectangle (1.05,0.6);
					\draw[line width=0.2mm, color=darkgray] plot[smooth cycle] (0.45,0) rectangle (1.5,0.6);
					\draw[line width=0.2mm, color=darkgray] plot[smooth cycle] (0.9,0.45) rectangle (1.5,1.05);
					\draw[line width=0.2mm, color=darkgray] plot[smooth cycle] (0,0.9) rectangle (0.6,1.5);
					\draw[line width=0.2mm, color=darkgray] plot[smooth cycle] (0.9,0.9) rectangle (1.5,1.5);
					\draw[line width=0.2mm, color=darkgray] plot[smooth cycle] (0.45,0.9) rectangle (1.05,1.5);
					
					\draw[line width=0.4mm, color=FireBrick] plot[smooth cycle] (0.45,0.45) rectangle (1.05,1.05);
					\draw[line width=0.4mm, color=FireBrick] plot[smooth cycle] (4.7,2.8) rectangle (5.3,3.4);
					
					\draw[line width=0.2mm] (0.9,0.9) -- (5.16,0.9);
					\draw[->, line width=0.2mm] (5.15,0.9) -- (5.15,3.15);
					
					\coordinate[label=below:\tiny$\xvec{x}$] (A) at (0,-0.3);
					\coordinate[label=below:\tiny${\xvec{q}=\xsym{\mathcal{U}}(\xvec{x},0,1) \in \mathcal{O}_i}$] (B) at (2.1,-0.25);
					\coordinate[label=below:\tiny${\xsym{\mathcal{Y}}(\xvec{q},0,t^{i}_a) \in \mathcal{O}}$] (C) at (4.85,-0.25);
					\coordinate[label=right:\tiny\color{FireBrick}$\mathcal{O}_i$] (E) at (0.47,0.72);
					\coordinate[label=right:\tiny\color{FireBrick}$\mathcal{O}$] (F) at (4.65,3);
					
					\fill[line width=0mm, color=FireBrick!100, fill=FireBrick!100] plot[smooth cycle] (0.5,2.5) circle (0.1);
					\draw[line width=0.6mm, color=black, fill=FireBrick!100] plot[smooth cycle] (0.9,0.9) circle (0.1);
					\draw[line width=0.6mm, dash dot, color=black, fill=FireBrick!100] plot[smooth cycle] (5.15,3.25) circle (0.1);
					\fill[line width=0mm, color=FireBrick!100, fill=FireBrick!100] plot[smooth cycle] (-0.3,-0.5) circle (0.1);
					\draw[line width=0.6mm, color=black, fill=FireBrick!100] plot[smooth cycle] (0.79,-0.5) circle (0.1);
					\draw[line width=0.6mm, dash dot, color=black, fill=FireBrick!100] plot[smooth cycle] (3.8,-0.5) circle (0.1);
					
					\draw [->, line width=0.2mm,
					line join=round,
					decorate, decoration={
						zigzag,
						segment length=8,
						amplitude=2.9,post=lineto,
						post length=4pt
					}]  (0.6,2.45) -- (0.88,1.02);
				\end{tikzpicture}
			}
			\caption{A sketch of several ideas related to the identities \eqref{equation:ChangeOfVariables} and \eqref{equation:UsingPropertyP3ToGet_chi}, referring to \Cref{subsection:opencoveringoverlapping} for the notations. In the particular example displayed here, the point $\xvec{q} \in \mathcal{O}_i$ is located on the boundary of $\mathcal{O}_i\cap\mathcal{O}_{l}$ for some $l \in \{1,\dots,K\}\setminus\{i\}$. Thus, the integral over $[T_b,1]$ of $g(\xsym{\mathcal{U}}(\xvec{x},0,\cdot),\cdot)$ can be compressed into an integral over $(t^i_a, t^i_b)$. During this short interval, the square $\mathcal{O}_i$ has already been moved by $\xsym{\mathcal{Y}}$ into $\mathcal{O}$ and the flow $\xsym{\mathcal{Y}}$ pauses. The overlapping squares at the bottom left indicate all members of the family $(\mathcal{O}_l)_{l\in\{1,\dots,K\}}$ which intersect~$\mathcal{O}_i$.}
			\label{Figure:AnalyzingCharacteristics}
		\end{figure}
		
		\paragraph{Step 3. Using an average-free control.} It remains to show that replacing the control $\widehat{\eta}$ from \eqref{equation:ExplicitControl3} by $\widetilde{\eta}$ from \eqref{equation:ExplicitControl2} leads to an average-free trajectory as desired. To this end, we define
		\begin{equation}\label{equation:v}
			v(\xvec{x}, t) \coloneq \widehat{v}(\xvec{x}, t) - \frac{{\chi}(\xvec{x})\int_0^t \int_{\mathbb{T}^2} \widehat{\eta}(\xvec{z}, s) \, \xdx{\xvec{z}} \xdx{s}}{\int_{\mathbb{T}^2} \chi(\xvec{z}) \, \xdx{\xvec{z}}}. 
		\end{equation}
		Clearly, it holds $v(\cdot, 0) = \widehat{v}(\cdot, 0) = 0$ and $v$ solves the controlled transport equation in \eqref{equation:EulerLinearizedlocalized}. Moreover, we shall see next that also
		\begin{equation}\label{equation:propac}
			v(\cdot, 1) = \widehat{v}(\cdot, 1).
		\end{equation}
		Indeed, since $g \in \xLtwo((0,1);\mathcal{H}(\mathcal{K}))$, we recognize that $\smallint_{\mathbb{T}^2} g(\xvec{x},t) \, \xdx{\xvec{x}} = 0$ for almost all $t \in [0,1]$, 
		and because the vector field $\overline{\xvec{u}}$ is divergence-free, we can further infer
		that the zero average of the initial data is preserved during the evolution governed by the transport equation \eqref{equation:EulerLinearizedNonlocalized}. Hence, $\smallint_{\mathbb{T}^2}V(\xvec{x},t) \, \xdx{\xvec{x}} = 0$ for all $t \in [0,1]$ and, since~$\overline{\xvec{y}}$ is divergence-free, it follows from \eqref{equation:TransformationsAlongCharacertistics} that
		\begin{equation*}\label{equation:timeavgctrl}
			0 = \int_{\mathbb{T}^2} V(\xvec{z},1) \, \xdx{\xvec{z}} = \int_{\mathbb{T}^2} \! \int_0^1 \widehat{\eta}(\xsym{\mathcal{Y}}(\xvec{z},0,s),s) \, \xdx{s} \, \xdx{\xvec{z}} = \int_0^1 \!\int_{\mathbb{T}^2} \widehat{\eta}(\xvec{z},s) \, \xdx{\xvec{z}} \, \xdx{s},
		\end{equation*}
		which together with \eqref{equation:v} implies \eqref{equation:propac}. Thus, considering the construction of $v$ and the equation \eqref{equation:tempwithcontroletahat} for $\widehat{v}$, the function~$v$ solves the problem \eqref{equation:EulerLinearizedlocalized} and obeys~\eqref{equation:LinApproxEst}. 
	\end{proof}

	\section{Navier--Stokes driven by scaled transport controls}\label{section:Navier--Stokes}
	Inspired by \cite{Coron96} and \cite{Nersesyan2021}*{Proposition 2.2}, we translate the approximate controllability result provided by \Cref{theorem:localized} to the nonlinear Navier--Stokes system via hydrodynamic scaling. Hereto, a well-posedness result is recalled in \Cref{subsection:wellp} and the scaling argument is carried out in \Cref{subsection:PassingToTheNonlinearSystem}.

	\subsection{Well-posedness}\label{subsection:wellp}
	We write $\xwcurl{\xvec{g}} \coloneq \partial_1 g_2 - \partial_2 g_1$ for the curl of~$\xvec{g} = [g_1, g_2]^{\top}$. Then, for any $m \in \mathbb{N}$, let $\xsym{\Upsilon}\colon \xHn{m}\times\mathbb{R}^2 \longrightarrow \bVn{{m+1}}$ be the operator assigning to $z \in \xHn{m}$ and $\xvec{A} \in \mathbb{R}^2$ the unique vector field $\xvec{g} = \xsym{\Upsilon}(z,\xvec{A}) \in \bVn{{m+1}}$ satisfying
	\begin{equation}\label{equation:divcurlprob}
		\xwcurl{\xvec{g}} = z, \quad \xdiv{\xvec{g}} = 0, \quad \int_{\mathbb{T}^2} \xvec{g}(\xvec{x}) \, \xdx{\xvec{x}} = \xvec{A}.
	\end{equation}
	The operator $\xsym{\Upsilon}$ can be expressed via
	\[
		\xsym{\Upsilon}(z,\xvec{A}) = [\partial_2 \varphi, -\partial_1 \varphi]^{\top} + \xvec{A},
	\]
	where the stream function $\varphi \in \xHn{{m+2}}$ is the unique solution to the Poisson problem
	\[
		- \Delta \varphi = z, \quad  \int_{\mathbb{T}^2} \varphi(\xvec{x}) \, \xdx{\xvec{x}} = 0.
	\]	
	By resorting to the elliptic theory for the div-curl problem \eqref{equation:divcurlprob}, we fix any constant~$C_0 > 0$ such that
	\begin{equation}\label{equation:velocityvorticityestimate}
		\|\xsym{\Upsilon}(z,\xvec{A})\|_{m+1} \leq C_0(\|z\|_m + |\xvec{A}|)
	\end{equation}
	for all $z \in \xHn{m}$ and $\xvec{A} \in \mathbb{R}^2$. 
	
	We proceed with a few remarks concerning the global well-posedness of the two-dimensional Navier--Stokes system. Here, for any given time $T > 0$ and a regularity parameter $m \in \mathbb{N}$, the solution space for the vorticity is chosen as
	\[
		\mathcal{X}_{T}^m \coloneq \xCzero([0,T]; \xHn{m}) \cap \xLtwo((0,T); \xHn{{m+1}}),
	\]
	endowed with the norm $\|\cdot\|_{\mathcal{X}_{T}^m} \coloneq \|\cdot\|_{\xCzero([0,T]; \xHn{m})} + \|\cdot\|_{\xLtwo((0,T); \xHn{{m+1}})}$.
	
	\begin{lmm}\label{lemma:Wellposedness}
		For any $w_0 \in \xHn{{m}}$, $\xsym{A} \in \xWn{{1,2}}((0,T);\mathbb{R}^2)$, and $h \in \xLtwo((0,T);\xHn{{m-1}})$, there exists a unique solution $w = {S}_{T}(w_0,h,\xsym{A}) \in \mathcal{X}_{T}^m$
		to the vorticity equation 
		\begin{equation}\label{equation:GeneralNavierStokesOrEuler}
			\partial_t w - \nu \Delta w + \left(\xsym{\Upsilon}(w,\xsym{A}) \cdot \xnab\right) w = h, \quad
			w(\cdot, 0) = w_0,
		\end{equation}
		where the resolving operator for \eqref{equation:GeneralNavierStokesOrEuler} is denoted by
		\[
			{S}_{T}(\cdot, \cdot, \cdot)\colon \xHn{m}\times\xLtwo((0,T);\xHn{{m-1}})\times\xWn{{1,2}}((0,T);\mathbb{R}^2) \longrightarrow \mathcal{X}_{T}^m, \quad (w_0,h,\xsym{A}) \longmapsto w.
		\]
	\end{lmm}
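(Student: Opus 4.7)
The plan is to reduce \eqref{equation:GeneralNavierStokesOrEuler} to a standard two-dimensional incompressible Navier--Stokes vorticity equation perturbed by a spatially constant, time-dependent drift, and then invoke classical well-posedness theory. First, I would split the velocity as
\[
\xsym{\Upsilon}(w, \xsym{A})(\xvec{x}, t) = \widetilde{\xvec{u}}(\xvec{x}, t) + \frac{1}{|\mathbb{T}^2|}\xsym{A}(t),
\]
where $\widetilde{\xvec{u}}(\cdot, t) \coloneq \xsym{\Upsilon}(w(\cdot, t), \xsym{0}) \in \bVn{{m+1}}$ is the mean-free field with $\xwcurl{\widetilde{\xvec{u}}} = w$, and rewrite \eqref{equation:GeneralNavierStokesOrEuler} equivalently as
\[
\partial_t w - \nu \Delta w + (\widetilde{\xvec{u}} \cdot \xnab) w + \frac{1}{|\mathbb{T}^2|}\xsym{A}(t) \cdot \xnab w = h.
\]
The embedding $\xWn{{1,2}}((0,T); \mathbb{R}^2) \hookrightarrow \xCzero([0,T]; \mathbb{R}^2)$ ensures $\xsym{A}$ is bounded on $[0,T]$, and since its space-gradient vanishes, the extra drift is skew-symmetric with respect to every $\xLtwo$-based inner product on $\mathbb{T}^2$ and therefore contributes nothing to the standard energy identities.

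Second, I would construct the solution via a Faedo--Galerkin scheme built on a Fourier projector $\mathrm{P}_n$ onto the first $n$ modes. The truncated system is a quadratic ODE on $\mathrm{P}_n \xHn{m}$ with a unique local-in-time solution $w_n$. To obtain global-in-time uniform bounds, I would run the classical two-dimensional vorticity estimates: the $\xLtwo$ identity controls $\|w_n(t)\|_{\xLtwo}^2 + \nu \int_0^t \|\xnab w_n(s)\|_{\xLtwo}^2 \, \xdx{s}$ in terms of $\|w_0\|_{\xLtwo}$ and $\|h\|_{\xLtwo((0,T); \xHn{{-1}})}$, while higher-order estimates follow by testing against $(-\Delta)^m w_n$, exploiting the algebra property of $\xHn{m}(\mathbb{T}^2)$ for $m \geq 1$ together with the Biot--Savart bound $\|\widetilde{\xvec{u}}\|_{m+1} \lesssim \|w\|_{m}$ to dominate the convective term and absorb its top-order contribution into the viscous dissipation, before closing with Gronwall. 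The resulting uniform bounds in $\mathcal{X}_T^m$, combined with a control on $\partial_t w_n$ in $\xLtwo((0,T); \xHn{{m-1}})$ read off the equation itself, yield a strongly convergent subsequence via Aubin--Lions whose limit is readily identified as the sought solution.

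For uniqueness and continuous dependence, I would subtract two solutions $w$ and $\widetilde{w}$ with identical data, set $\delta \coloneq w - \widetilde{w}$, and write the $\xLtwo$ energy estimate for $\delta$. The only surviving nonlinear contribution is $(\xsym{\Upsilon}(\delta, \xsym{0}) \cdot \xnab)\widetilde{w}$; the two-dimensional Ladyzhenskaya-type bound $\|\xsym{\Upsilon}(\delta, \xsym{0})\|_{\xLfour} \lesssim \|\delta\|_{\xLtwo}^{1/2}\|\xnab \delta\|_{\xLtwo}^{1/2}$ combined with H\"older's inequality permits absorbing a fraction of the dissipation and closing a Gronwall argument. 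The time-continuity $w \in \xCzero([0,T]; \xHn{m})$ then follows from the standard Gelfand-triple interpolation lemma once one knows $w \in \xLtwo((0,T); \xHn{{m+1}})$ and $\partial_t w \in \xLtwo((0,T); \xHn{{m-1}})$.

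The main obstacle beyond routine bookkeeping is verifying that the inhomogeneous mean $\xsym{A}$, which is only of Sobolev regularity in time, does not spoil the higher-order estimates. This is however mild: since $\xsym{A}$ enters only through the spatially constant drift $|\mathbb{T}^2|^{-1}\xsym{A}(t) \cdot \xnab w$, whose coefficient commutes with every constant-coefficient spatial operator, each $\xHn{m}$ bound reduces verbatim to its counterpart for two-dimensional Navier--Stokes on the torus, and the $\xCzero$-in-time control on $\xsym{A}$ is sufficient to keep all constants in the Gronwall inequality uniformly bounded on $[0,T]$.
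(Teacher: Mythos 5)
Your argument is correct but follows a genuinely different route from the paper. The paper's proof never touches Galerkin: it sets $\xvec{a}(t) \coloneq \xsym{A}(t) - \int_0^t\!\int_{\mathbb{T}^2}\xvec{f}\,\xdx{\xvec{x}}\,\xdx{s}$ and re\-writes the problem as the two-dimensional velocity Navier--Stokes system on $\mathbb{T}^2$ with the modified force $\xvec{f} + \partial_t\xvec{a}$ and initial datum $\xsym{\Upsilon}(w_0,\xsym{A}(0))$; the spatially constant correction $\partial_t\xvec{a}$ forces the velocity average to equal $\xsym{A}(t)$ at all times, and existence/regularity are then outsourced to classical references (Temam, Constantin--Foias), with uniqueness by energy estimates. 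Your approach instead stays at the vorticity level throughout, splits off the constant drift $|\mathbb{T}^2|^{-1}\xsym{A}(t)$ (exploiting its skew-symmetry in every $\xHn{m}$ pairing), and runs a self-contained Faedo--Galerkin scheme with energy estimates, Aubin--Lions compactness, and a Ladyzhenskaya-type uniqueness argument. The paper's route is shorter because it delegates the hard analysis to cited theorems; yours is longer but more self-contained and makes explicit that the prescribed mean $\xsym{A}$ only enters through a constant-coefficient transport term that never affects the a priori bounds.

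One small technical imprecision worth fixing: $\xHn{m}(\mathbb{T}^2)$ is a Banach algebra only for $m > 1$, i.e.\ $m \geq 2$, not for all $m \geq 1$ as written. For $m = 1$ the higher-order estimate still closes, but the tool is not the algebra property; instead one uses the bilinear estimate $\|(\widetilde{\xvec{u}}\cdot\xnab) w\|_{m-1} \lesssim \|\widetilde{\xvec{u}}\|_{m+1}\|w\|_m$ (valid for all $m\geq 1$ by Sobolev embedding and H\"older in two dimensions), combined with commutator or interpolation arguments, before applying Gronwall with the time-integrability of $\|w\|_1^2$ provided by the $\xLtwo$ level. With that wording corrected, the proof is sound.
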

	\begin{proof}
		Given any $\xvec{f} \in \xLtwo((0,T);\bVn{m})$ with $\xwcurl{\xvec{f}} = h$, one may reduce the well-posedness and regularity statements of \Cref{lemma:Wellposedness} to corresponding ones for the two-dimensional Navier--Stokes problem in velocity form
		\begin{equation}\label{equation:navsavcor}
			\partial_t \xvec{u} - \nu \Delta \xvec{u} + \left(\xvec{u} \cdot \xnab\right) \xvec{u} + \xnab p = \xvec{f} + \partial_t \xvec{a},\quad
			\xdiv{\xvec{u}} = 0,\quad
			\xvec{u}(\cdot, 0) =  \xsym{\Upsilon}(w_0, \xsym{A}(0)),
		\end{equation}
		where $\xvec{a}(t) \coloneq \xsym{A}(t) - \int_0^t \int_{\mathbb{T}^2} \xvec{f}(\xvec{x},s)  \, \xdx{\xvec{x}}  \, \xdx{s}$ is chosen such that the solution $\xvec{u}(t)$ to \eqref{equation:navsavcor} has average $\xsym{A}(t)$. 
		For the well-posedness of \eqref{equation:navsavcor}, we refer, \eg, to \cite{Temam2001,ConstantinFoias1988}.
	\end{proof}
	
	\subsection{Large controls in small time}\label{subsection:PassingToTheNonlinearSystem}
	Via hydrodynamic scaling arguments, the following lemma translates controls for linear transport to vorticity controls. Here, $\overline{\xvec{y}}$ is the vector field fixed in \Cref{subsection:flushing}.
	
	\begin{lmm}\label{lemma:SmallTimeConvergenceToFinalstateOfLinearized}
		Let any $m \in \mathbb{N}$, initial state $w_0 \in \xHn{{m+1}}$, forces $g, h \in \xLtwo((0,1);\xHn{{m-1}})$, and velocity average $\widetilde{\xvec{U}} \in \xWn{{1,2}}((0,1); \mathbb{R}^2)$ be fixed. Furthermore, let~$v$ be the solution to the linear transport problem
		\begin{equation}\label{equation:InitialStateEulerLinearizedlocalized}
			\partial_t v + (\overline{\xvec{y}} \cdot \xnab) v = g, \quad
			v(\cdot, 0) = w_0,
		\end{equation}
		and denote
		\begin{gather*}
			g_{\delta}(\cdot, t) \coloneq \delta^{-1}g(\cdot, \delta^{-1}t), \quad \overline{\xvec{y}}_{\delta}(t) \coloneq \delta^{-1} \overline{\xvec{y}}(\delta^{-1}t), \\
			\widetilde{\xvec{U}}_{\delta}(t) \coloneq \widetilde{\xvec{U}}(\delta^{-1}t), \quad \widetilde{\xsym{\aleph}}_{\delta}(t) \coloneq \overline{\xvec{y}}_{\delta}(t) + \widetilde{\xvec{U}}_{\delta}(t)			
		\end{gather*}
		for $t \in (0,\delta)$. Then, one has the convergence
		\begin{equation}\label{equation:smalltimelimit_to_finalstate_of_linearized}
			\|S_{\delta}(w_0, h + g_{\delta}, \widetilde{\xsym{\aleph}}_{\delta})|_{t=\delta} - v(\cdot,1)\|_m \longrightarrow 0 \quad \mbox{as } \delta \longrightarrow 0,
		\end{equation}
		uniformly in $w_0$ and $g,h$ from bounded subsets of $\xHn{{m+1}}$ and $\xLtwo((0,1);\xHn{{m-1}})$ respectively.
	\end{lmm}
	\begin{proof}
		To emphasize that \eqref{equation:smalltimelimit_to_finalstate_of_linearized} will be uniform with respect to data from bounded subsets, we fix any~$M_0 > 0$ with
		\begin{equation}\label{equation:M_0Bound}
			\|w_0\|_{m+1} + \|g\|_{\xLtwo((0,1);\xHn{{m-1}})} + \|h\|_{\xLtwo((0,1);\xHn{{m-1}})} \leq M_0.
		\end{equation}
		When $w_0$ and $g$ satisfy \eqref{equation:M_0Bound}, the solution $v$ to the transport problem \eqref{equation:InitialStateEulerLinearizedlocalized} remains in a bounded set $\xB$ in $\xCzero([0,1];\xHn{{m+1}})\cap \xWn{{1,2}}((0,1);\xHn{m})$, where $\xB$ depends only on $M_0$. 
		Moreover, we select $w = S_{\delta}(w_0, h + g_{\delta}, \widetilde{\xsym{\aleph}}_{\delta})$ with associated velocity~$\xvec{u} \coloneq \xsym{\Upsilon}(w,\widetilde{\xsym{\aleph}}_{\delta})$.
		
		\paragraph{Step 1. Ansatz.} We consider for any given $\delta \in (0,1)$ the velocity and vorticity expansions
		\begin{equation}\label{equation:ansatz}
			w = v_{\delta} + r, \quad \xvec{u} = \overline{\xvec{y}}_{\delta} + \xvec{V}_{\delta} + \xvec{R}, \quad \xvec{V}_{\delta} \coloneq \xsym{\Upsilon}(v_{\delta}, \widetilde{\xvec{U}}_{\delta}), \quad v_{\delta}(\cdot,t) \coloneq v(\cdot, \delta^{-1}t)
		\end{equation}
		with remainders
		\[
			r\colon \mathbb{T}^2\times[0,\delta] \longrightarrow \mathbb{R}, \quad \xvec{R} \coloneq \xsym{\Upsilon}(r, \xsym{0}).
		\]
		Due to the property~P\Rref{item:P1} of $\overline{\xvec{y}}$ from \Cref{theorem:returnmethodflow} and the choice of $\widetilde{\xsym{\aleph}}_{\delta}$, the ansatz for $\xvec{u}$ in \eqref{equation:ansatz} is consistent with the definitions of $\overline{\xvec{y}}_{\delta}$, $\xvec{V}_{\delta}$, and $\xvec{R}$. Furthermore, the initial condition $v(\cdot,0) = w_0$ in \eqref{equation:InitialStateEulerLinearizedlocalized} implies that $r(\cdot,0) = 0$ in $\mathbb{T}^2$. Therefore, we can show \eqref{equation:smalltimelimit_to_finalstate_of_linearized} by verifying
		\begin{equation}\label{equation:MilestoneSmallTimeConvergenceToFinalstateOfLineraized}
			\|r(\cdot, \delta)\|_{m} \longrightarrow 0 \, \mbox{ as } \, \delta \longrightarrow 0.
		\end{equation}
		The convergence in \eqref{equation:MilestoneSmallTimeConvergenceToFinalstateOfLineraized} shall be demonstrated next.
		\paragraph{Step 2. Energy estimates for the remainder.}
		After plugging \eqref{equation:ansatz} and the definition of~$v_{\delta}$ into the equation satisfied by $S_{\delta}(w_0, h + g_{\delta})$, which is of the form \cref{equation:ThmNavierSTokesOrEuler}, the remainder $r$ is seen to obey in $\mathbb{T}^2 \times (0,\delta)$ the initial value problem
		\begin{equation}\label{equation:RemainderEvolutionequation}
			\partial_t r - \nu \Delta r + ((\overline{\xvec{y}}_{\delta} + \xvec{V}_{\delta} + \xvec{R}) \cdot \xnab) r + (\xvec{R} \cdot \xnab)v_{\delta} = \xi_{\delta}, \quad
			r(\cdot, 0) = 0,	
		\end{equation}
		with $\xi_{\delta} \coloneq h - (\xvec{V}_{\delta} \cdot \xnab)v_{\delta} + \nu \Delta v_{\delta}$. In particular, for all $t \in [0,\delta]$ one has
		\begin{equation}\label{equation:EllipticregularityBoundsRandVdelta}
			\|\xvec{R}(\cdot,t)\|_{m+1} \leq C_0 \| r(\cdot,t) \|_{m}, \quad \|\xvec{V}_{\delta}(\cdot,t)\|_{m+1} \leq C_0 \left(\| v_{\delta}(\cdot,t)\|_{m} + |\widetilde{\xvec{U}}_{\delta}(t)|\right),
		\end{equation}
		where $C_0 > 0$ is the constant from \eqref{equation:velocityvorticityestimate}.
		
		We proceed by taking the $\xLtwo(\mathbb{T}^2)$-inner product of \eqref{equation:RemainderEvolutionequation} with $\Delta^m r$, followed by integrating in time from~$0$ to~$\delta$. After integration by parts, while also applying the Poincar\'e inequality and noting that $\xdiv{\xvec{R}} = 0$, one obtains for $t \in (0,\delta)$ the estimate
		\begin{multline*}\label{EnergyequalityRemainder}
			\|r(\cdot,t)\|_m^2 + 2\nu \int_0^t \|r(\cdot,s)\|_{m+1}^2 \, \xdx{s} \\
			\begin{aligned}
				& \leq C\int_0^t \|\xi_{\delta}(\cdot, s)\|_{m-1}\|r(\cdot,s)\|_{m+1} \, \xdx{s}\\
				& \quad + 2\int_0^t \|\xvec{R}(\cdot, s)\|_{m+1}\left(\|v_{\delta}(\cdot, s)\|_{m+1}\|r(\cdot, s)\|_m + \|r(\cdot, s)\|_m \|r(\cdot, s)\|_{m+1}\right)  \, \xdx{s} \\
				& \quad + 2\int_0^t \|\overline{\xvec{y}}_{\delta}(s) + \xvec{V}_{\delta}(\cdot, s)\|_{m+1} \|r(\cdot, s)\|_m^2  \, \xdx{s}\\
				& = I_1 + I_2 + I_3.
			\end{aligned}
		\end{multline*}
		Regarding $I_1$, integration by parts, combined with inequalities of Young and Cauchy-Schwarz, yields for a constant $C = C(\nu) > 0$ the estimate
		\begin{equation}\label{equation:I_1_a}
			\begin{aligned}
				I_1 & \leq C\int_0^{t} \|\xi_{\delta}(\cdot, s)\|_{m-1}^2  \, \xdx{s} + \frac{\nu}{4}\int_0^t \|r(\cdot,s)\|_{m+1}^2 \, \xdx{s} \\
				& \leq C \int_0^t \left(\|h(\cdot, s)\|_{m-1}^2 + \|(\xvec{V}_{\delta}(\cdot, s) \cdot \xnab)v_{\delta}(\cdot, s)\|_{m-1}^2 + \|\nu \Delta v_{\delta}(\cdot, s)\|_{m-1}^2\right) \, \xdx{s} \\
				& \quad + \frac{\nu}{4}\int_0^t \|r(\cdot,s)\|_{m+1}^2 \, \xdx{s}.
			\end{aligned}
		\end{equation}
		Consequently, by a change of variables in the time integrals of \eqref{equation:I_1_a} and taking into account that $t \in (0, \delta)$, one has the bound
		\begin{equation*}\label{equation:I_1_b}
			\begin{aligned}
				I_1 & \leq C \int_0^{\delta} \|h(\cdot, s)\|_{m-1}^2 \, \xdx{s} + C \delta \int_0^1 \|(\xvec{V}_{\delta}(\cdot, \delta s) \cdot \xnab)v(\cdot, s)\|_{m-1}^2 \, \xdx{s} \\
				& \quad + \delta \nu^2 C \int_0^1 \|v(\cdot, s)\|_{m+1}^2 \, \xdx{s} + \frac{\nu}{4}\int_0^t \|r(\cdot,s)\|_{m+1}^2 \, \xdx{s}.
			\end{aligned}
		\end{equation*}
		Next, by utilizing the estimate for $\xvec{R}$ from \eqref{equation:EllipticregularityBoundsRandVdelta}, the integral~$I_2$ is bounded via
		\begin{equation*}
			\begin{aligned}
				I_2 & \leq 2C_0 \int_0^t \|r(\cdot, s)\|_{m}\left(\|v_{\delta}(\cdot, s)\|_{m+1}\|r(\cdot, s)\|_m + \|r(\cdot, s)\|_m\|r(\cdot, s)\|_{m+1}\right) \, \xdx{s} \\
				& \leq C_0\int_0^t \|v_{\delta}(\cdot, s)\|_{m+1}^2 \, \xdx{s} + \frac{\nu}{4}\int_0^t \|r(\cdot, s)\|_{m+1}^2 \, \xdx{s} + C \int_0^t \|r(\cdot, s)\|_m^4 \, \xdx{s},
			\end{aligned}
		\end{equation*}
		and a change of variables implies 
		\[
			\int_0^t \|v_{\delta}(\cdot, s)\|_{m+1}^2 \, \xdx{s} \leq \delta \int_0^1  \|v(\cdot, s)\|_{m+1}^2 \, \xdx{s} \longrightarrow 0  \quad \mbox{as } \delta \longrightarrow 0.
		\]
		Concerning $I_3$, we denote $M \coloneq \sup_{s \in [0,1]}\overline{\xvec{y}}(s)$, which is independent of $\delta \in (0,1)$, and perform a change of variables in order to get
		\begin{equation*}
			\begin{aligned}
				\widetilde{I}_{3,\delta}(t) & \coloneq \int_0^t \|\overline{\xvec{y}}_{\delta}(s) + \xvec{V}_{\delta}(\cdot, s)\|_{m+1}  \, \xdx{s} \\
				& \leq \int_0^1 \left(M + \delta C_0 \|v(\cdot, s)\|_{m} + \delta C_0 |\widetilde{\xvec{U}}(s)|\right) \, \xdx{s}  \longrightarrow M \quad \mbox{as } \delta \longrightarrow 0.
			\end{aligned}
		\end{equation*}
		Then, by Gr\"onwall's inequality, for each $\delta \in (0,1)$ there exists $\varepsilon_{\delta} = \varepsilon_{\delta}(M_0) > 0$, which is independent of $t \in [0,\delta]$ and data obeying \eqref{equation:M_0Bound}, such that $\lim\limits_{\delta\to0}\varepsilon_{\delta} = 0$ and
		\begin{equation*}
			\|r(\cdot, t)\|_m^2 \leq \left( \varepsilon_{\delta} + C\int_0^t \|r(\cdot,s)\|_m^4 \, \xdx{s} \right) \exp \left(\widetilde{I}_{3,\delta}(t) \right),
		\end{equation*}
		where $C > 0$ is independent of $\delta$.
		Thus, there exists a new constant $C > 0$, which is likewise independent of $\delta \in (0,1)$, $t \in [0,\delta]$, and data satisfying \eqref{equation:M_0Bound}, such that  
		\begin{equation*}
			\|r(\cdot, t)\|_m^2 \leq C\varepsilon_{\delta} + C\int_0^t \|r(\cdot,s)\|_m^4 \, \xdx{s} \eqcolon \Theta(t).
		\end{equation*}
		As a result, by comparison with the differential inequality $\Theta' \leq C \Theta^2$  (\cf~\cite{Nersesyan2021}*{Proposition 2.2}), we conclude \eqref{equation:MilestoneSmallTimeConvergenceToFinalstateOfLineraized} and thus arrive at~\eqref{equation:smalltimelimit_to_finalstate_of_linearized}.
	\end{proof}

	\section{Proof of the main result}\label{section:General}
	This section is devoted to proving \Cref{theorem:main0_vel}. Hereto, we fix $k \in \mathbb{N}$, the control time~$T_{\operatorname{ctrl}} > 0$, the states $\xvec{u}_0, \xvec{u}_1 \in \bVn{{k+1}}$, the force $\xvec{f} \in \xLtwo((0,T_{\operatorname{ctrl}});\xHn{{k+1}}(\mathbb{T}^2;\mathbb{R}^2))$, and the viscosity $\nu > 0$; see \Cref{remark:energysolution} for the regularity assumptions in this section. Further, the profile $\overline{\xvec{y}}$ is given by \Cref{theorem:returnmethodflow}, and, for simplicity of the presentation, we assume that the number $N$ in \eqref{equation:IntroductionLocalizedControl2} is even and occasionally identify $\mathcal{K}$, when used as an index set, with $\{1,\dots, N/2\}$ or $\{N/2+1,\dots N\}$.

	\subsection{Vorticity formulation of the controllability problem}\label{subsection:vorticity_formulation}
	Given initial and target states $w_0$ and $w_1$, an accuracy parameter $\varepsilon > 0$, and a prescribed force $h$, we consider for the vorticity $w = \xwcurl{\xvec{u}}\colon\mathbb{T}^2\times(0,T_{\operatorname{ctrl}}) \longrightarrow \mathbb{R}$ the controllability problem
	\begin{equation}\label{equation:00NavierSTokesOrEuler}
			\partial_t w - \nu \Delta w + \left(\xsym{\Upsilon}(w, \xsym{\aleph}) \cdot \xnab\right) w = h + \mathbb{I}_{\omegaup}\eta, \quad
			w(\cdot, 0) = w_0,
	\end{equation}
	where the finitely decomposable control $\eta$ and the velocity average~$\xsym{\aleph} = \xsym{\aleph}_{\sigma}$ are sought such that
	\begin{equation*}\label{equation:terminalconstrnonlvorticityform}
		\|w(\cdot, T_{\operatorname{ctrl}}) - w_1\|_{k} < \varepsilon.
	\end{equation*}
	As a result of the subsequent analysis, the average $\xsym{\aleph} = \xsym{\aleph}_{\sigma} \in \xWn{{1,2}}((0,T_{\operatorname{ctrl}}); \mathbb{R}^2)$ shall be given in terms of the yet unknown scaling parameter $\sigma > 0$:
	\begin{equation}\label{equation:Tsig}
		\begin{gathered}
			\xsym{\aleph}_{\sigma}(t) \coloneq  \mathbb{I}_{[0, T_{\sigma}]}(t)\xvec{U}(0) +  \mathbb{I}_{[T_{\sigma},T_{\operatorname{ctrl}}]}(t) (\sigma\overline{\xvec{y}}+\xsym{U})(\sigma (t-T_{\sigma})), \quad T_{\sigma} \coloneq T_{\operatorname{ctrl}}-\sigma^{-1},
		\end{gathered}
	\end{equation}
	where $\xvec{U} \in \xWn{{1,2}}((0,1); \mathbb{R}^2)$ is selected such that
	\begin{equation}\label{equation:profileU}
		\forall t \in [0,1/4] \colon \, \xvec{U}(t) = \int_{\mathbb{T}^{2}} \xvec{u}_0(\xvec{x}) \, \xdx{\xvec{x}}, \quad \forall t \in [3/4,1] \colon \, \xvec{U}(t) = \int_{\mathbb{T}^{2}} \xvec{u}_1(\xvec{x}) \, \xdx{\xvec{x}}. 
	\end{equation}
	Moreover, the force $\eta = \eta_{\gamma_1,\dots,\gamma_{N},\sigma}$ will have an explicit representation in terms of the yet unknown control coefficients $\gamma_1$, $\dots$, $\gamma_{N+2}, \sigma$ and the universally fixed profiles $\eta_1, \dots, \eta_{N+2}$, whose choice only depends on the region~$\omegaup$ fixed in \Cref{section:introduction}. Namely,
	\begin{equation}\label{equation:formeta}
		\begin{aligned}
			\eta(\xvec{x},t) & = \sum_{l=1}^N \gamma_l(t) \eta_l(\xvec{x}, \sigma (t-T_{\sigma})) + \sum_{l=N+1}^{N+2}\gamma_{l}(t) \eta_{l}(\xvec{x}),
		\end{aligned}
	\end{equation}
	where
	\begin{equation*}
		\begin{aligned}
			\eta_l(\xvec{x},t) = \begin{cases}
				\chi(\xvec{x}) s_{l} \left(\xsym{\Xi}(\xvec{x},t) \right), & l \in \{1,\dots, N/2\},\\
				\chi(\xvec{x}) c_{l} \left(\xsym{\Xi}(\xvec{x},t) \right), & l \in \{N/2+1,\dots, N\},\\
				(\overline{\xvec{y}}\cdot\xnab) {\chi}(\xvec{x}), & l = N + 1, \\ 
				{\chi}, & l = N + 2
			\end{cases}
		\end{aligned}
	\end{equation*}
	with the cutoff $\chi$ from \eqref{equation:Definition_chi} and the composed flow $\xsym{\Xi}$ defined in \eqref{equation:DefinitionPhiPsi}. Furthermore, we will take $\sigma \geq T_{\operatorname{ctrl}}^{-1}$, $\gamma_1(t) = \dots = \gamma_{N+2}(t) = 0$ for $t \in [0, T_{\sigma}]$, and it will be possible to express $\gamma_{N+1}$ and $\gamma_{N+2}$ through formulas in terms of $\gamma_1, \dots, \gamma_N$.

	\subsection{Controllability of the vorticity formulation}\label{subsection:ctrlvorticity}
	The following theorem states the approximate controllability of the vorticity formulation~\eqref{equation:00NavierSTokesOrEuler} in small time via large controls. Throughout, we denote
	\[
		\widetilde{\xsym{\aleph}}_{\delta}(t) = \delta^{-1}\overline{\xvec{y}}(\delta^{-1}t) + \xvec{U}(\delta^{-1}t), \quad \delta > 0,
	\]
	where $\xvec{U}$ is the profile defined in \eqref{equation:profileU}.
	\begin{thrm}\label{theorem:main_largecontrol}
		Given any $h \in \xLtwo((0,1);\xHn{{k-1}})$ and $w_0, w_1 \in \xHn{{k+1}}$, there exist control parameters $(\zeta_l)_{l \in \{1,\dots,N\}} \subset \xLtwo((0, 1); \mathbb{R})$
		such that the sequence of solutions $(w_{\delta} \in \mathcal{X}_{\delta}^k)_{\delta > 0}$ to the respective vorticity problems
		\begin{equation}\label{equation:ThmNavierSTokesOrEuler}
			\begin{gathered}
				\partial_t w_{\delta} - \nu \Delta w_{\delta} + \left(\xsym{\Upsilon}(w_{\delta},\widetilde{\xsym{\aleph}}_{\delta}) \cdot \xnab\right) w_{\delta} = h + \delta^{-1}\widetilde{\eta}_{\zeta_1,\dots,\zeta_N}(\cdot, \delta^{-1} \cdot), \quad w_{\delta}(\cdot, 0) = w_0
			\end{gathered}
		\end{equation}
		satisfies
		\begin{equation}\label{equation:deltatozero}
			w_{\delta}(\cdot, \delta) \longrightarrow  w_1 \, \mbox{ in } \, \xHn{k} \, \mbox{ as } \,  \delta \longrightarrow 0.
		\end{equation}
		Moreover, $(\zeta_l)_{l \in \{1,\dots,N\}} \subset \xLtwo((0, 1); \mathbb{R})$ can be selected, depending on $w_0, w_1 \in \xHn{{k+1}}$, such that the convergence in \eqref{equation:deltatozero} is uniform with respect to $w_0, w_1$ and $h$ from bounded subsets of $\xHn{{k+1}}$ and $\xLtwo((0,1);\xHn{{k-1}})$.
	\end{thrm}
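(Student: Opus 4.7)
The plan is a time-scaling argument that reduces the statement to the linear transport controllability developed in \Cref{subsection:atransporteqdrivenbyglobalfinitedimcontrl,subsection:localizingctrl}, closed by a stability estimate in $\xHn{k}$. I introduce the fast time $\tau := t/\delta$ and the rescaled unknown $W_\delta(\xvec{x},\tau) := w_\delta(\xvec{x},\delta\tau)$. Exploiting the linearity of $\xsym{\Upsilon}$ in both of its arguments together with the explicit form \eqref{equation:deltaaleph} of $\widetilde{\xsym{\aleph}}_\delta$, the problem \eqref{equation:ThmNavierSTokesOrEuler} becomes on the fixed interval $[0,1]$
\begin{equation*}
\partial_\tau W_\delta - \nu\delta\Delta W_\delta + \bigl(\delta\,\xsym{\Upsilon}(W_\delta,\xvec{0}) + \xvec{Y}(\tau) + \delta\,\xvec{M}(\tau)\bigr)\cdot\xnab W_\delta = \delta h(\cdot,\delta\tau) + \widetilde{\eta}_{\zeta_1,\dots,\zeta_N}(\cdot,\tau),
\end{equation*}
with $W_\delta(\cdot,0)=w_0$, where $\xvec{Y}(\tau)$ and $\xvec{M}(\tau)$ are the spatially constant vectors arising from the means of $\overline{\xvec{y}}$ and $\xvec{U}$ inside $\widetilde{\xsym{\aleph}}_\delta$. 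The formal limit $\delta\to 0$ is the linear transport equation $\partial_\tau W + \xvec{Y}(\tau)\cdot\xnab W = \widetilde{\eta}_{\zeta_1,\dots,\zeta_N}$ with $W(\cdot,0)=w_0$, which is precisely the object controlled in the preceding subsections.

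Given $\varepsilon > 0$ and $w_0, w_1 \in \xHn{k+1}$, I would then invoke the controllability results of \Cref{subsection:atransporteqdrivenbyglobalfinitedimcontrl,subsection:localizingctrl} to produce parameters $(\zeta_l)_{l=1}^N \subset \xLtwo((0,1);\mathbb{R})$ such that the solution $W$ of the limit problem satisfies $\|W(\cdot,1)-w_1\|_{\xHn{k}} < \varepsilon/2$, with $\widetilde{\eta}_{\zeta_1,\dots,\zeta_N}$ bounded in $\xLtwo((0,1);\xHn{k})$ uniformly over bounded sets of data. With these controls fixed, the difference $R_\delta := W_\delta - W$ obeys $R_\delta(\cdot,0)=0$ and
\begin{equation*}
\partial_\tau R_\delta - \nu\delta\Delta R_\delta + \xvec{Y}(\tau)\cdot\xnab R_\delta = \nu\delta\Delta W + \delta h(\cdot,\delta\tau) - \delta\bigl(\xsym{\Upsilon}(W_\delta,\xvec{0})+\xvec{M}(\tau)\bigr)\cdot\xnab W_\delta.
\end{equation*}
Since $\xvec{Y}(\tau)$ is spatially constant, the transport term is antisymmetric in every $\xHn{k}$ inner product, and a standard $\xHn{k}$ energy estimate combined with Gr\"onwall then yields $\|R_\delta\|_{\xCzero([0,1];\xHn{k})}=O(\delta)$, provided a uniform-in-$\delta$ bound on $W_\delta$ in $\xCzero([0,1];\xHn{k})\cap\xLtwo((0,1);\xHn{k+1})$ is in place. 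Combining this with the previous step gives $w_\delta(\cdot,\delta)=W_\delta(\cdot,1)\to w_1$ in $\xHn{k}$, as required. Uniformity over bounded sets of data propagates because every object in the construction---the return profile $\overline{\xvec{y}}$, the cutoffs, the flow $\xsym{\Xi}$, and the linear test---depends only on $\omegaup$, while the coefficients $\zeta_l$ depend continuously on $(w_0, w_1, h)$.

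The principal technical obstacle is the a priori bound on $W_\delta$: the viscosity $\nu\delta$ vanishes as $\delta\to 0$ and the control amplitude $\|\widetilde{\eta}_{\zeta_1,\dots,\zeta_N}\|_{\xLtwo((0,1);\xHn{k})}$ is not small, so parabolic smoothing cannot be relied upon to close the estimate. I expect to handle this by treating the rescaled equation as transport-dominated, exploiting that $\xvec{Y}(\tau)$ is spatially constant (whence every $\xHn{k}$-seminorm is transported isometrically), and absorbing the remaining quadratic term $\delta\,\xsym{\Upsilon}(W_\delta,\xvec{0})\cdot\xnab W_\delta$ via Kato--Ponce type commutator estimates in $\xHn{k}$ whose closing smallness is supplied by the $\delta$ prefactor over the fixed interval $[0,1]$.
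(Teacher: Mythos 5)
Your rescaling to fast time $\tau = t/\delta$ reproduces the paper's strategy up to a change of variables: the paper keeps the original time $t\in[0,\delta]$, sets $v_\delta(\cdot,t)=v(\cdot,\delta^{-1}t)$, and estimates the remainder $r := w_\delta - v_\delta$ (Lemma~\ref{lemma:SmallTimeConvergenceToFinalstateOfLinearized}); your $R_\delta = W_\delta - W$ is precisely $r$ in fast time. So the route is genuinely the same in spirit. There are, however, two places where your proposal stays silent on details that the paper must and does address.

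First, the initial datum. Theorem~\ref{corollary:localized} controls the transport equation \emph{from zero} initial data, whereas the limit problem you set up has $W(\cdot,0)=w_0$. The paper bridges this gap with the return method: it solves the homogeneous transport $\partial_\tau\widetilde v + \overline{\xvec y}\cdot\xnab\widetilde v=0$, $\widetilde v(\cdot,0)=w_0$, notes that property P\Rref{item:P2} of $\overline{\xvec y}$ forces $\widetilde v(\cdot,1)=w_0$, and then applies Theorem~\ref{corollary:localized} with the shifted target $\widehat v_1 := w_1 - w_0$ before superposing $v=\widetilde v+\widehat v$. Without this step, simply ``invoking the controllability results'' does not directly give the terminal estimate $\|W(\cdot,1)-w_1\|_k<\varepsilon/2$ that you use. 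This is a small but load-bearing omission, since it is exactly where the closed-loop structure of the return vector field enters.

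Second, the Gr\"onwall closure. You phrase your estimate as: bound $W_\delta$ uniformly first (via Kato--Ponce and the $\delta$ prefactor), then close a linear Gr\"onwall on $R_\delta$. This can be made to work, but as stated it is circular: the a~priori bound on $W_\delta$ already requires a Riccati-type argument of the same nature as the final estimate. The paper's proof is more economical --- it never needs a uniform bound on $W_\delta$ itself. It writes the energy inequality for $r$ with $r(\cdot,0)=0$, isolates the quartic term $\int_0^t\|r\|_m^4\,\xdx{s}$, and closes via comparison with $\tfrac{\xdx{}}{\xdx{t}}\Theta\le C\Theta^2$ (a one-shot Riccati argument), using that the remaining forcing $\varepsilon_\delta\to 0$ as $\delta\to 0$. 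In your rescaled picture this would correspond to directly estimating $R_\delta$ (which vanishes at $\tau=0$) and absorbing the quartic term $\delta\,\xsym{\Upsilon}(R_\delta,\xvec 0)\cdot\xnab R_\delta$ through the same comparison, rather than first bootstrapping a bound on the full $W_\delta$. Your observation that $\overline{\xvec y}$ is spatially constant and transports Sobolev seminorms isometrically is correct but not strictly needed: the paper simply bounds the commutator contribution by $\|\overline{\xvec y}_\delta\|_{m+1}\|r\|_m^2$ and observes that $\int_0^\delta\|\overline{\xvec y}_\delta\|\,\xdx{s}$ stays bounded uniformly in $\delta$ after a change of variables.

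In short: your approach is correct and essentially the paper's, modulo supplying the superposition step with property P\Rref{item:P2} and replacing the circular a~priori bound on $W_\delta$ with the direct quartic Riccati comparison on the remainder.
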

	\begin{proof} 
		Let $\widetilde{v}$ be the solution in $\mathbb{T}^2 \times [0,1]$ to the linear homogeneous transport equation
		\begin{equation*}\label{equation:FreeEulerLinearizedlocalized}
			\partial_t \widetilde{v} + (\overline{\xvec{y}} \cdot \xnab) \widetilde{v} = 0, \quad
			\widetilde{v}(\cdot, 0) = w_0.
		\end{equation*}
		In particular, we know that $\widetilde{v}(1) = w_0$, as the flow $\xsym{\mathcal{Y}}$ associated with the spatially constant vector field $\overline{\xvec{y}}$ satisfies the property {\rm P}\Rref{item:P2} from \Cref{subsection:flushing}. Now, we fix any~$\varepsilon > 0$ and apply \Cref{theorem:localized} with the final state $\widehat{v}_1 \coloneq w_1 - w_0$, in order to obtain controls $(\zeta_l)_{l \in \{1,\dots,N\}} = (\zeta_{\xsym{\ell}}^s,\zeta_{\xsym{\ell}}^c)_{\xsym{\ell}\in\mathcal{K}} \subset \xLtwo((0,1);\mathbb{R})$ such that the corresponding solution 
		\[
			\widehat{v} \in \xCzero([0,1];\xHn{{k+1}})\cap\xWn{{1,2}}((0,1);\xHn{k})
		\]
		to the transport equation \eqref{equation:EulerLinearizedlocalized} obeys $\|\widehat{v}(\cdot,1) - \widehat{v}_1\|_{k+1} < \varepsilon$.
		Accordingly, the superposition $v \coloneq \widetilde{v} + \widehat{v}$ solves 
		an initial value problem of the type \eqref{equation:InitialStateEulerLinearizedlocalized} with right-hand side $\widetilde{\eta}_{\zeta_1,\dots,\zeta_N}$ and satisfies $\|v(\cdot,1) - w_1\|_{k+1} < \varepsilon$.
		Thus, an application of \Cref{lemma:SmallTimeConvergenceToFinalstateOfLinearized} provides for sufficiently small $\delta > 0$ the estimate
		\[
			\|S_{\delta}\left(w_0, h + \delta^{-1} \widetilde{\eta}_{\zeta_1,\dots,\zeta_N}(\cdot,\delta^{-1}\cdot), \widetilde{\xsym{\aleph}}_{\delta} \right)\!\!|_{t=\delta} - w_1\|_{k} < \varepsilon,
		\]
		where $\delta$ can be chosen uniformly with respect to $w_0, w_1$ and $h$ from respective bounded subsets of $\xHn{{k+1}}$ and $\xLtwo((0,1);\xHn{{k-1}})$. Indeed, as explained in \Cref{remark:rinv}, the family $(\zeta_l)_{l \in \{1,\dots,N\}}$ can be recovered from the initial and target states by means of a continuous linear operator. Hence, the controls $(\zeta_l)_{l \in \{1,\dots,N\}}$, and thus $\widetilde{\eta}_{\zeta_1,\dots,\zeta_N}$, remain in a bounded subset of $\xLtwo((0,1);\mathbb{R})$, respectively $\xLtwo((0,1);\xHn{{k-1}})$, when $w_0$ and $w_1$ vary in a bounded subset of $\xHn{{k+1}}$, so that the uniform choice of $\delta$ is guaranteed by \Cref{lemma:SmallTimeConvergenceToFinalstateOfLinearized}.
	\end{proof}
	
	The next theorem is concerned with the approximate controllability of \eqref{equation:00NavierSTokesOrEuler} on the original time interval $[0, T_{\operatorname{ctrl}}]$, thereby constituting a vorticity version of \Cref{theorem:main0_vel}. 
	\begin{thrm}\label{theorem:main0_general}
		Let $T_{\operatorname{ctrl}} > 0$, $h \in \xLtwo((0,T_{\operatorname{ctrl}});\xHn{{k}})$, $w_0, w_1 \in \xHn{{k}}$, and $\varepsilon > 0$ be arbitrary. There exist $\sigma \geq T_{\operatorname{ctrl}}^{-1}$ and control parameters
		\[
			(\gamma_l)_{l \in \{1,\dots,N\}}  \subset 	\xLtwo((0, 1); \mathbb{R})
		\]
		such that the unique solution $w \in \mathcal{X}_{T_{\operatorname{ctrl}}}^k$ to \eqref{equation:00NavierSTokesOrEuler}, where $\eta$ is obtained from $(\gamma_l)_{l \in \{1,\dots,N\}}$ via \eqref{equation:formeta},
		satisfies the terminal condition
		\begin{equation}\label{equation:rcondthm}
			\|w(\cdot, T_{\operatorname{ctrl}}) - w_1\|_{k} < \varepsilon.
		\end{equation}
	\end{thrm}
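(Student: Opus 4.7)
The plan is to split $[0, T_{\operatorname{ctrl}}]$ into the passive interval $[0, T_\sigma]$, with $T_\sigma = T_{\operatorname{ctrl}} - \sigma^{-1}$, on which the indicator $\mathbb{I}_{[T_\sigma, T_{\operatorname{ctrl}}]}$ in \eqref{equation:EtaDefViaEtaTilde} suppresses the control and the average $\xsym{\aleph}_\sigma$ is constantly $\int_{\mathbb{T}^2}\xvec{u}_0$, and the active interval $[T_\sigma, T_{\operatorname{ctrl}}]$ of length $\delta = \sigma^{-1}$, on which the time shift $t \mapsto t - T_\sigma$ converts \eqref{equation:00NavierSTokesOrEuler} into exactly the rescaled equation \eqref{equation:ThmNavierSTokesOrEuler} of \Cref{theorem:main_largecontrol}. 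On the passive interval the fluid evolves freely; on the active interval, \Cref{theorem:main_largecontrol} will bring the free state at time $T_\sigma$ to within $\varepsilon/2$ of the target in $\xHn{k}$, provided $\sigma$ is chosen large enough.

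First I would close the regularity gap on the target side: using density of $\xHn{k+1}$ in $\xHn{k}$, pick $\widetilde{w}_1 \in \xHn{k+1}$ with $\|\widetilde{w}_1 - w_1\|_k < \varepsilon/2$. Next I apply \Cref{lemma:Wellposedness} on $[0, T_{\operatorname{ctrl}}]$ with $\eta \equiv 0$, constant average $\int \xvec{u}_0$, and forcing $h$, producing a free trajectory $w_\star \in \mathcal{X}_{T_{\operatorname{ctrl}}}^k$. Since $w_\star \in \xLtwo((0, T_{\operatorname{ctrl}}); \xHn{k+1})$, the set of $t_0 > 0$ with $w_\star(t_0) \in \xHn{k+1}$ has full measure; fix one such $t_0$. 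Restarting \Cref{lemma:Wellposedness} at $t_0$ with regularity parameter $m = k+1$ (permissible because $h \in \xLtwo((0, T_{\operatorname{ctrl}}); \xHn{k})$) upgrades $w_\star$ on $[t_0, T_{\operatorname{ctrl}}]$ to $\xCzero([t_0, T_{\operatorname{ctrl}}]; \xHn{k+1}) \cap \xLtwo((t_0, T_{\operatorname{ctrl}}); \xHn{k+2})$, which in particular yields a bound $\|w_\star(T_\sigma)\|_{k+1} \leq C$ uniform in $\sigma$ as long as $T_\sigma \geq t_0$.

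I then invoke \Cref{theorem:main_largecontrol} on the rescaled active interval, with $\delta = \sigma^{-1}$, initial datum $w_\star(T_\sigma) \in \xHn{k+1}$, target $\widetilde{w}_1 \in \xHn{k+1}$, forcing $h(\cdot, T_\sigma + \cdot)$ extended by zero from $[0, \delta]$ to $[0, 1]$ (whose $\xLtwo((0,1); \xHn{k-1})$ norm is controlled by $\|h\|_{\xLtwo((0, T_{\operatorname{ctrl}}); \xHn{k})}$), and velocity-average profile matching $\xsym{\aleph}_\sigma(T_\sigma + \cdot)$ from \eqref{equation:Tsig}, i.e.~$\widetilde{\xsym{\aleph}}_\delta$ from \eqref{equation:deltaaleph}. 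Since all three input data remain in fixed bounded subsets of the respective spaces as $\sigma \to \infty$, the uniform convergence asserted at the end of \Cref{theorem:main_largecontrol} yields some $\sigma_\varepsilon \geq \max\{T_{\operatorname{ctrl}}^{-1}, (T_{\operatorname{ctrl}} - t_0)^{-1}\}$ such that for any $\sigma \geq \sigma_\varepsilon$ the resulting parameters $(\zeta_l)_{l \in \{1,\dots,N\}}$ give $\|w(T_{\operatorname{ctrl}}) - \widetilde{w}_1\|_k < \varepsilon/2$. The triangle inequality delivers \eqref{equation:rcondthm}, while formulas \eqref{equation:zetagam1}--\eqref{equation:zetagam2} determine the remaining $\gamma_{N+1}, \dots, \gamma_{N+3}$.

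The main obstacle is reconciling the $\xHn{k}$ assumptions of \Cref{theorem:main0_general} with the $\xHn{k+1}$ assumptions of \Cref{theorem:main_largecontrol} while still being free to send $\delta \to 0$. The target side is trivial via density, but for the initial side one must also guarantee that the datum $w_\star(T_\sigma)$ passed to the active phase enjoys a $\xHn{k+1}$ bound uniform in $\sigma$, even though $T_\sigma$ itself moves with $\sigma$. The parabolic smoothing of the 2D Navier--Stokes vorticity equation, combined with the restriction $T_\sigma \geq t_0$ for a fixed good time $t_0$, resolves exactly this point, and the uniformity clause of \Cref{theorem:main_largecontrol} then carries the argument through.
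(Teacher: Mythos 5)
Your proof is correct and follows the same broad strategy as the paper's: regularize the target by density, upgrade the regularity of the state at the start of the active phase, and then exploit the uniformity clause of \Cref{theorem:main_largecontrol} so that the $\sigma$-dependence of the active-phase initial datum and of the shifted force $h(\cdot,T_\sigma+\cdot)$ does not obstruct taking $\delta=\sigma^{-1}\to 0$. Where you differ is in how this uniformity is deployed. The paper splits the argument in two: it first determines a suitable $\delta$ by feeding \Cref{theorem:main_largecontrol} the time-reversed force $h(\cdot,T_{\operatorname{ctrl}}-\cdot)$ (a single fixed element of $\xLtwo((0,1);\xHn{k-1})$ that does \emph{not} depend on $\delta$), and only afterwards re-applies the theorem with the correct $\delta$-dependent force $h(\cdot,T_{\operatorname{ctrl}}-\delta+\cdot)$, appealing to the uniformity and the norm identity \eqref{equation:hfoba} to verify $\varkappa_0\geq\delta$. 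You collapse these two steps by observing directly that the triple $(w_\star(T_\sigma),\widetilde{w}_1,h(\cdot,T_\sigma+\cdot))$ stays in fixed bounded sets for $\sigma\geq(T_{\operatorname{ctrl}}-t_0)^{-1}$, so one application of the uniformity yields a single threshold $\sigma_\varepsilon$; this is a cleaner reading of the same mechanism and avoids introducing the surrogate force, whose only role in the paper is to pin down $\delta$ before the actual force is known. A second, smaller difference is how you get $w_\star(T_\sigma)\in\xHn{k+1}$ uniformly in $\sigma$: you pick a restart time $t_0$ from the full-measure set where $w_\star(t_0)\in\xHn{k+1}$ and re-invoke \Cref{lemma:Wellposedness} at level $m=k+1$, which is a nice self-contained alternative to the paper's appeal to parabolic smoothing from \cite{Temam2001,FoiasTemam1989}; both deliver $\xCzero$-in-time regularity into $\xHn{k+1}$ past $t_0$, hence the desired uniform bound. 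So the substance is the same, but your version is tighter; the paper's two-step presentation is chosen to make explicit why the na\"ive literal reading of \Cref{theorem:main_largecontrol} (force before $\delta$) would circle back on itself, a point you correctly dissolve by leaning on the uniformity clause.
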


	\begin{proof}
		The idea is to activate the control only shortly before reaching the terminal time~$T_{\operatorname{ctrl}}$ (\cf~\Cref{Figure:activinactivecontr}). However, as $\eta$ should be exactly of the form~\eqref{equation:formeta} with $N+3$ unknown parameters $\gamma_1, \dots, \gamma_{N+2}$, and $\sigma$, the main difficulty is now to select via \Cref{theorem:main_largecontrol} a value of $\sigma^{-1} = \delta > 0$ which allows switching on the control at time $T_{\operatorname{ctrl}}-\delta$ while reaching the $\varepsilon$-neighborhood in $\xHn{k}$ of $w_1$ at time $t = T_{\operatorname{ctrl}}$. An alternative strategy, which is more straightforward but requires one control parameter more, is sketched in \Cref{remark:delta0delta} below. Throughout, the initial velocity average is abbreviated by $\xvec{U}_0 \coloneq \int_{\mathbb{T}^2} \xvec{u}_0(\xvec{x}) \, \xdx{\xvec{x}}$.
		
		\paragraph{Step 1. Determining $\delta > 0$.}
		We fix $\widetilde{w}_1 \in \xHn{{k+1}}$ such that $\|w_1 -\widetilde{w}_1\|_{k} < \varepsilon/2$, which is possible by density. Moreover, we denote by $\widetilde{w}_0\coloneq{S}_{T_{\operatorname{ctrl}}}(w_0,h,\xvec{U}_0)_{|_{t = T_{\operatorname{ctrl}}}}$ the state reached when the control is inactive during the whole time interval $[0,T_{\operatorname{ctrl}}]$. Due to $h \in \xLtwo((0,T_{\operatorname{ctrl}});\xHn{{k}})$, \Cref{lemma:Wellposedness} and known parabolic smoothing effects (\cf~\cite{Temam2001}) imply that ${S}_{T_{\operatorname{ctrl}}}(w_0,h,\xvec{U}_0) \in \xCzero((0,T_{\operatorname{ctrl}}];\xHn{{k+1}})$; hence, $\widetilde{w}_0 \in \xHn{{k+1}}$. By applying \Cref{theorem:main_largecontrol} with the initial and target states $\widetilde{w}_0$ and $\widetilde{w}_1$ respectively, while defining $h(\cdot, T_{\operatorname{ctrl}} - s) = 0$ if $s > T_{\operatorname{ctrl}}$, one obtains a small number $\delta \in (0, T_{\operatorname{ctrl}})$ and control parameters $(\rho_{l})_{l \in \{1,\dots,N\}} \subset \xLtwo((0, 1); \mathbb{R})$
		such that the function
		\begin{equation}\label{equation:widetildewdelta}
			\widetilde{w}_{\delta} \coloneq {S}_{\delta}\left(\widetilde{w}_0,h(\cdot,  T_{\operatorname{ctrl}} - \cdot) + \delta^{-1}\widetilde{\eta}_{\rho_1,\dots,\rho_N}(\cdot, \delta^{-1} \cdot), \widetilde{\xsym{\aleph}}_{\delta} \right)
		\end{equation}
		meets the terminal condition
		\begin{equation}\label{equation:wtildetercond}
			\|\widetilde{w}_{\delta}(\cdot, \delta) - \widetilde{w}_1\|_{k} < \varepsilon/2.
		\end{equation}
		Then, since ${S}_{T_{\operatorname{ctrl}}}(w_0,h,\xvec{U}_0) \in \xCzero((0,T_{\operatorname{ctrl}}];\xHn{{k+1}})$, and because \Cref{theorem:main_largecontrol} allows choosing~$\delta$ uniformly with respect to initial data from a bounded subset of $\xHn{{k+1}}$, we can assume~$\delta \in (0, T_{\operatorname{ctrl}})$ to be sufficiently small such that \eqref{equation:wtildetercond} remains valid when replacing~$\widetilde{w}_0$ (and thus $(\rho_{l})_{l \in \{1,\dots,N\}}$) by states from the bounded subset
		\[
			\left\{{S}_{T_{\operatorname{ctrl}}}(w_0,h,\xvec{U}_0)_{|_{t = s}} \, | \, s \in (T_{\operatorname{ctrl}}/2, T_{\operatorname{ctrl}}] \right\} \subset \xHn{{k+1}}.
		\]
		Now, the value of $\delta$ is set and this step is complete. However, for the sake of clarity, let us make two additional remarks.
		
		{\bf First remark.} The state $\widetilde{w}_{\delta}$ and the associated control parameters $(\rho_l)_{l\in\{1,\dots,N\}}$ will not be used explicitly in what follows; they were only introduced as auxiliaries for fixing~$\delta$.
		
		{\bf Second remark.} We inserted $h(\cdot,  T_{\operatorname{ctrl}} - \cdot)$ in \eqref{equation:widetildewdelta} because $\delta$ was unknown at that point; as $h(\cdot,  T_{\operatorname{ctrl}} - \delta + \cdot)$ depends on $\delta$, we could not have employed \Cref{theorem:main_largecontrol} using that force. After $\delta$ is chosen, we can use the correct external force $h(\cdot,  T_{\operatorname{ctrl}} - \cdot)$ without impacting \eqref{equation:wtildetercond}. Indeed, since the convergence property \eqref{equation:deltatozero} in \Cref{theorem:main_largecontrol} is uniform with respect to prescribed external forces from a bounded subset of $\xLtwo((0,1);\xHn{{k-1}})$, and by noticing that
		\begin{equation}\label{equation:hfoba}
			\int_0^{\delta} \|h(\cdot, T_{\operatorname{ctrl}}-s)\|_{k-1}^2 \, \xdx{s}  =  \int_0^{\delta} \|h(\cdot, T_{\operatorname{ctrl}}-\delta + s)\|_{k-1}^2 \, \xdx{s},
		\end{equation}
		one can infer that also
		\begin{equation*}\label{equation:w_hat}
			\widehat{w}_{\delta} \coloneq {S}_{\delta}\left(\widetilde{w}_0,h(\cdot,  T_{\operatorname{ctrl}} - \delta + \cdot) + \delta^{-1}\widetilde{\eta}_{\rho_1,\dots,\rho_N}(\cdot, \delta^{-1} \cdot), \widetilde{\xsym{\aleph}}_{\delta} \right)
		\end{equation*}
		satisfies
		\begin{equation*}\label{equation:w_hat_tc}
			\|\widehat{w}_{\delta}(\cdot, \delta) - \widetilde{w}_1\|_{k} < \varepsilon/2.
		\end{equation*}

		\paragraph{Step 2. Gluing two trajectories.}
		By applying \Cref{theorem:main_largecontrol} with the initial state $w_{\delta,0} \coloneq {S}_{T_{\operatorname{ctrl}} - \delta}(w_0,h,\xvec{U}_0)|_{t=T_{\operatorname{ctrl}} - \delta}$ and external force $h(\cdot,  T_{\operatorname{ctrl}} - \delta + \cdot)$, there exist control parameters $(\zeta_{l})_{l\in\{1,\dots,N\}} \subset \xLtwo((0, 1); \mathbb{R})$ and a number $\varkappa_0 > 0$ such that
		\begin{equation}\label{equation:wdelta}
			w_{\varkappa} = {S}_{\varkappa}\left(w_{\delta,0},h(\cdot, T_{\operatorname{ctrl}} - \delta + \cdot) + \varkappa^{-1}\widetilde{\eta}_{\zeta_1,\dots,\zeta_N}(\cdot, \varkappa^{-1} \cdot), \widetilde{\xsym{\aleph}}_{\varkappa}\right)
		\end{equation}
		satisfies
		\begin{equation}\label{equation:prfvrtvstcpr}
			\|w_{\varkappa}(\cdot, \varkappa) - \widetilde{w}_1\|_{k} < \varepsilon/2
		\end{equation}
		for any $\varkappa \in (0, \varkappa_0)$. Because the convergence property \eqref{equation:deltatozero} in \Cref{theorem:main_largecontrol} is uniform with respect to initial data from a bounded subset of $\xHn{{k+1}}$ and external forces from a bounded subset of $\xLtwo((0,1);\xHn{{k-1}})$, keeping \eqref{equation:hfoba} in mind, we can take $\varkappa = \delta < \varkappa_0$ in \eqref{equation:wdelta}.
		
		As a result, the proof of \Cref{theorem:main0_general} can now be concluded by choosing $\sigma \coloneq \delta^{-1}$ and defining
		\begin{equation}\label{equation:defsoltctrlprob}
			w(\xvec{x},t) \coloneq \begin{cases}
				{S}_{T_{\operatorname{ctrl}}}(w_0,h,\xvec{U}_0)(\xvec{x},t), & (\xvec{x},t) \in \mathbb{T}^2 \times [0, T_{\sigma}],\\
				w_{\sigma^{-1}}(\xvec{x},t-T_{\sigma}), & (\xvec{x},t) \in \mathbb{T}^2 \times [T_{\sigma}, T_{\operatorname{ctrl}}],
			\end{cases}
		\end{equation}
		where $T_{\sigma} = T_{\operatorname{ctrl}}-\sigma^{-1}$. By \Cref{lemma:Wellposedness}, the function $w$ from \eqref{equation:defsoltctrlprob} is the unique solution to \eqref{equation:00NavierSTokesOrEuler} with the control $\eta = \eta_{\zeta_1,\dots,\zeta_N,\sigma}$, as specified in \eqref{equation:formeta}, and the velocity average $\xsym{\aleph}_{\sigma}$, as defined in \eqref{equation:Tsig}. In particular, owing to \eqref{equation:prfvrtvstcpr} with $\varkappa = \sigma^{-1}$, the function $w$ defined in \eqref{equation:defsoltctrlprob} obeys \eqref{equation:rcondthm}. Finally, the control parameters in \eqref{equation:formeta} are of the form
		\begin{equation}\label{equation:zetagam1}
			\gamma_l(t) \coloneq \sigma \mathbb{I}_{[T_{\sigma},T_{\operatorname{ctrl}}]}(t) \widetilde{\gamma}_l(\sigma (t-T_{\sigma})),
		\end{equation}
		where
		\begin{equation}\label{equation:zetagam2}
			\begin{aligned}\setstretch{0.3}
				\widetilde{\gamma}_l(t) \coloneq \begin{cases}
					\sum_{j=1}^K 
					\mathbb{I}_{[t^j_a,t^j_b]}(t) \zeta_{l}(\tau_j(t)), &  l \leq N,\\ & \\
					- \int_0^t \int_{\mathbb{T}^2} \widehat{\eta}_{(\zeta_{\xsym{\ell}}^s,\zeta_{\xsym{\ell}}^c)_{\xsym{\ell}\in\mathcal{K}}} (\xvec{z}, s) \, \xdx{\xvec{z}} \xdx{s}, & l = N+1,\\& \\
					- \int_{\mathbb{T}^2} \widehat{\eta}_{(\zeta_{\xsym{\ell}}^s,\zeta_{\xsym{\ell}}^c)_{\xsym{\ell}\in\mathcal{K}}} (\xvec{z}, t) \, \xdx{\xvec{z}} , & l = N + 2.
				\end{cases}
			\end{aligned}
		\end{equation}
		Moreover, inserting the formula for $\widehat{\eta}_{\zeta_1,\dots,\zeta_N}$ from \eqref{equation:ExplicitControl3} through \eqref{equation:zetagam2} into \eqref{equation:zetagam1} allows to express $\gamma_{N+1}$ and $\gamma_{N+2}$ in terms of $\gamma_{1}, \dots, \gamma_{N}$.
	\end{proof}
	\begin{rmrk}\label{remark:delta0delta}
		The proof of \Cref{theorem:main0_general} simplifies when allowing an additional control parameter. Namely, instead of Step~1 above, one could fix $\delta_0 > 0$ such that any trajectory to \eqref{equation:00NavierSTokesOrEuler} entering the $\varepsilon/2$-neighborhood of $w_1$ in $\xHn{k}$ with inactive control remains in the $\varepsilon$-neighborhood of $w_1$ in $\xHn{k}$ for time $\delta_0$. Then, in Step~2, one could glue suitable trajectories as follows. First, using no control until $t = T_{\operatorname{ctrl}} - \delta_0$, next steering in time $\delta \in (0, \delta_0)$ into the $\varepsilon/2$-neighborhood of $w_1$ in $\xHn{k}$, finally, switching again the control off during $[T_{\operatorname{ctrl}} - \delta_0 + \delta, T_{\operatorname{ctrl}}]$. However, a representation for such a control would require parameters $\gamma_1,\dots,\gamma_{N+2}$, $\sigma = \delta^{-1}$, and $\delta_0$.
	\end{rmrk}
	\begin{figure}[ht!]
		\centering
		\resizebox{0.9\textwidth}{!}{
			\begin{tikzpicture}
				\clip(0,-0.6) rectangle (8.8,5.6);
				
				\draw[line width=0.8mm, dashed, color=FireBrick!100] plot[smooth cycle] (4,3.5) circle (1.6);
				
				\coordinate[label=right:\scriptsize \color{FireBrick}$w_0$] (A) at (0.2,0.5);
				\coordinate[label=right:\scriptsize $\widetilde{w}_0$] (A) at (6.9,1.6);
				\draw [line width=0.4mm, dashed, ->, shorten <= 0.25cm, shorten >= 0.25cm] (0.8,0.5) to[out=20,in=-50] (6.9,1.5);
				
				\coordinate[label=right:\scriptsize \color{FireBrick}$w_1$] (B) at (3.68,3.2);
				\coordinate[label=right:\scriptsize$\widetilde{w}_1$] (B) at (3.15,4.1);
				\begin{scope}[transparency group, opacity=0.4]
					\draw [line width=0.4mm, ->, shorten <= 0.25cm, shorten >= 0.25cm] (7,1.42) to[out=-160,in=200] (3.5,3.8);
				\end{scope}
				\coordinate[label=right:\scriptsize$w_{\delta,0}$] (F) at (7.3,0.75);
				\draw [line width=0.4mm, ->, shorten <= 0.25cm, shorten >= 0.25cm] (7.3,0.8) to[out=-170,in=260] (2.85,3.3);
				
				\draw[line width=0.4mm, dashed, color=FireBrick!100] plot (4,3.5) -- (4.92,4.8);
				\coordinate[label=right:\color{FireBrick}\scriptsize$\varepsilon$] (G) at (4.4,4);

				\fill[line width=0mm, color=FireBrick!100, fill=FireBrick!100] plot[smooth cycle] (0.85,0.52) circle (0.12);
				\fill[line width=0mm, color=FireBrick!100, fill=FireBrick!100] plot[smooth cycle] (4,3.5) circle (0.12);
				\fill[line width=0mm, color=black, fill=black, opacity=0.4] plot[smooth cycle] (6.95,1.4) circle (0.1);
				\fill[line width=0mm, color=black, fill=black] plot[smooth cycle] (3.45,3.8) circle (0.1);
				\fill[line width=0mm, color=black, fill=black] plot[smooth cycle] (7.3,0.8) circle (0.1);
				
			\end{tikzpicture}
		}
		\caption{A sketch of several ideas from the proof of \Cref{theorem:main0_general}. Starting at $t = 0$ with the original initial state $w_0$, we first follow the uncontrolled trajectory (dashed line with arrow) until reaching a state $w_{\delta,0}$ that is close to the natural terminal point $\widetilde{w}_0$. Hereto, a good value of $\delta > 0$  is chosen such that the system can be steered in time $\delta$ from the state $\widetilde{w}_0$ to a small neighborhood of the regularized target $\widetilde{w}_1$. The value of $\delta$ is taken so that a controlled trajectory starting at $w_{\delta,0}$ can reach the desired small neighborhood of $\widetilde{w}_1$ in the same time $\delta$.}
		\label{Figure:activinactivecontr}
	\end{figure}
	\subsection{Conclusion of the main theorem}\label{subsection:prfmainthm}
	\Cref{theorem:main0_vel} is now a consequence of \Cref{theorem:main0_general}. To this end, we denote by $\varepsilon > 0$ the approximation accuracy parameter selected in \Cref{theorem:main0_vel}.
	
	\paragraph{Step 1. Controlling the vorticity.}
	We apply \Cref{theorem:main0_general} with the initial data $w_0 \coloneq \xwcurl{\xvec{u}_0}$,  target state $w_1 \coloneq  \xwcurl{\xvec{u}_1}$, prescribed force $h \coloneq \xwcurl{\xvec{f}}$, and accuracy parameter $\varepsilon > 0$ from \Cref{theorem:main0_vel}.
	This provides $\sigma_0 \geq T_{\operatorname{ctrl}}^{-1}$ and the existence of controls $\gamma_1,\dots\gamma_N \in \xLtwo((0, T_{\operatorname{ctrl}}); \mathbb{R})$
	such that the unique solution~$w$ to \eqref{equation:00NavierSTokesOrEuler}, with $\eta = \eta_{\gamma_1,\dots\gamma_N,\sigma}$ given by \eqref{equation:formeta},
	satisfies 
	\begin{equation}\label{equation:prfmttc}
		\|w(\cdot, T_{\operatorname{ctrl}}) - w_1\|_{k} < \frac{\varepsilon}{C_0}
	\end{equation}
	for any $\sigma \geq \sigma_0$, and where $C_0 > 0$ is the constant determined by \eqref{equation:velocityvorticityestimate}.
	\paragraph{Step 2. Integrating the vorticity control.}
	Due to the location of~$\operatorname{supp}(\chi)$ defined in \eqref{equation:Definition_chi}, we can choose a point $\xvec{p}^{\omegaup} = [p_{1}^{\omegaup},p_{2}^{\omegaup}]^{\top} \in \mathbb{T}^2$, a small number $d^{\omegaup} > 0$, and a length parameter $L^{\omegaup} > 0$ such that the square $\mathcal{O}^{\omegaup} \coloneq \xvec{p}^{\omegaup} + [0, L^{\omegaup}]^2$
	satisfies
	\begin{equation}\label{equation:sqp}
		\operatorname{supp}(\eta) \subset  \mathcal{O}^{\omegaup} \times (0, T_{\operatorname{ctrl}}) \subset \omegaup \times (0, T_{\operatorname{ctrl}}), \quad \operatorname{dist}(\mathcal{O}^{\omegaup}, \partial \omegaup) > d^{\omegaup},
	\end{equation}
	where $\partial \omegaup$ is the boundary of the control region $\omegaup \subset \mathbb{T}^2$.
	Moreover, we denote the auxiliary functions
	\begin{gather*}
		a(x_1,x_2,t) \coloneq \int_{p^{\omegaup}_1}^{x_1} \eta(s,x_2,t) \, \xdx{s}, \quad b(x_2, t) \coloneq \int_{p^{\omegaup}_2}^{x_2} a(p^{\omegaup}_1+L^{\omegaup}, s, t) \, \xdx{s},\\
		c(x_1,x_2,t) \coloneq a(x_1,x_2,t) -\rhoup(x_1)a(p^{\omegaup}_1+L^{\omegaup},x_2, t),
	\end{gather*}
	where $\rhoup \in \xCinfty(\mathbb{T};\mathbb{R})$ satisfies
	\[
		\operatorname{supp}(\rhoup) \subset (p^{\omegaup}_1,p^{\omegaup}_1+L^{\omegaup}+d^{\omegaup}), \quad \rhoup(s) = 1 \mbox{ for } s \in (p^{\omegaup}_1 + L^{\omegaup},p^{\omegaup}_1+L^{\omegaup} + d^{\omegaup}/2).
	\] 
	Then, inspired by \cite[Appendix A.2]{CoronMarbachSueur2020}, we define the vector field $\widehat{\xsym{\xi}} = [\widehat{\xi}_1,\widehat{\xi}_2]^{\top}\in \xLtwo((0,T_{\operatorname{ctrl}}); \xCinfty(\mathbb{T}^2; \mathbb{R}^2))$ by setting
	\begin{equation}\label{equation:ExplicitVelocityControl1}
		\begin{gathered}
			\widehat{\xi}_1(x_1,x_2,t) \coloneq \begin{cases}
				-\rhoup'(x_1)b(x_2, t),  & x_1 \in [0, p^{\omegaup}_1 + L^{\omegaup} + d^{\omegaup}/4 ),\\
				0, & x_1 \in [p^{\omegaup}_1 + L^{\omegaup} + d^{\omegaup}/4, 2\pi ),
			\end{cases} \\
			\widehat{\xi}_2(x_1,x_2,t) \coloneq \begin{cases}
				c(x_1,x_2,t), &  x_1 \in [p^{\omegaup}_1, p^{\omegaup}_1 + L^{\omegaup} + d^{\omegaup}/4],\\
				0, & \mbox{otherwise}
			\end{cases}
		\end{gathered}
	\end{equation}
	for $(x_1,x_2,t) \in \mathbb{T}^2\times(0,T_{\operatorname{ctrl}})$. Owing to this construction, recalling \eqref{equation:sqp} and the average-free property $\smallint_{\mathbb{T}^2} \eta(\xvec{x}, t) \, \xdx{\xvec{x}} = 0$, it follows that
	\[	
		\operatorname{supp}(\widehat{\xsym{\xi}}(\cdot,t)) \subset \omegaup, \quad \xwcurl(\widehat{\xsym{\xi}}(\cdot,t)) = \eta(\cdot,t).
	\]
	Finally, after obtaining explicit formulas for the profiles $\xsym{\vartheta}_1,\dots,\xsym{\vartheta}_{N+2}$ by inserting the expression for~$\eta$ from \eqref{equation:formeta} into \eqref{equation:ExplicitVelocityControl1}, one arrives at the representation
	\begin{equation}\label{equation:exprfc}
		\widehat{\xsym{\xi}}(\xvec{x},t) = \sum_{l=1}^N \gamma_l(t) \xsym{\vartheta}_l(\xvec{x}, 1 - \sigma (T_{\operatorname{ctrl}}-t)) + \sum_{l=N+1}^{N+2} \gamma_{l}(t) \xsym{\vartheta}_{l}(\xvec{x}),
	\end{equation}
	where $(\gamma_j)_{j \in \{1,\dots,N+2\}}$ are those from \eqref{equation:zetagam1}.
	Up to composing $\xsym{\vartheta}_{1}(\xvec{x}, \cdot), \dots, \xsym{\vartheta}_{N}(\xvec{x}, \cdot)$ with the transformation $t \mapsto 1 - t$, one can also write \eqref{equation:exprfc} in the form \eqref{equation:IntroductionLocalizedControl}, replacing $\xsym{\vartheta}_l(\xvec{x}, 1 - \sigma (T_{\operatorname{ctrl}}-t))$ by $\xsym{\vartheta}_l(\xvec{x},\sigma (T_{\operatorname{ctrl}}-t))$ without renaming the profiles.
	
	\paragraph{Step 3. Controlling the velocity average.} To describe the curl-free part of the force~$\xsym{\xi}$ in \eqref{equation:IntroductionLocalizedControl2}, let us explain how the building blocks~$\xsym{\vartheta}_{N+3}, \xsym{\vartheta}_{N+4} \in \xCinfty(\mathbb{T}^2; \mathbb{R}^2)$ are constructed with
	\begin{equation}\label{equation:propcutoffvf1}
		\begin{gathered}
			\xwcurl{\xsym{\vartheta}_{N+3}} = \xwcurl{\xsym{\vartheta}_{N+4}} = 0, \quad \operatorname{supp}(\xsym{\vartheta}_{N+3}) \cup \operatorname{supp}(\xsym{\vartheta}_{N+4}) \subset \Omega
		\end{gathered}
	\end{equation}
	and
	\begin{equation}\label{equation:propcutoffvf2}
	\begin{gathered}
		\mathbb{R}^2 = \operatorname{span}_{\mathbb{R}}\left\{\int_{\mathbb{T}^2} \xsym{\vartheta}_{N+3}(\xvec{x}) \, \xdx{\xvec{x}}, \int_{\mathbb{T}^2} \xsym{\vartheta}_{N+4}(\xvec{x}) \, \xdx{\xvec{x}}\right\}.
	\end{gathered}
\end{equation}
	This choice is enabled by the assumption that $\Omega$ contains two cuts $\mathcal{C}_1, \mathcal{C}_2 \subset \Omega$ with the property that $\mathbb{T}^2 \setminus (\mathcal{C}_1 \cup \mathcal{C}_2)$ is simply-connected  (\cf~\Cref{Figure:twocontroldomains}). For simplicity, let us here assume that $\mathcal{C}_1$ and $\mathcal{C}_2$ can be chosen as the graphs of smooth real-valued functions over the vertical and horizontal axis respectively, noting that the arguments for the general case are provided in \Cref{theorem:cutoffvectorfield}. Then, one can explicitly construct two functions $\upsilonup_1, \upsilonup_2 \colon \mathbb{T} \longrightarrow \mathbb{R}$ such that
	\[
		x_1 + \upsilonup_1(x_2) = 0 \iff (x_1,x_2) \in \mathcal{C}_1, \quad x_2 + \upsilonup_2(x_1) = 0 \iff (x_1, x_2) \in \mathcal{C}_2.
	\]
	Furthermore, we fix a small number $l > 0$ and any non-negative cutoff $\beta \in \xCinfty(\mathbb{T};\mathbb{R}_+)$ which obeys
	\[
		\forall i \in \{1,2\} \colon \, \operatorname{dist}(\mathcal{C}_i, \partial \Omega) < l, \quad \operatorname{supp}(\beta) \subset (-l/2, l/2), \quad  \beta(0) > 0.
	\]
	Then, we define the curl-free vector fields
	\begin{equation*}\label{equation:cutoffvfgof}
		\xsym{\vartheta}_{N+3} \coloneq \begin{bmatrix}
			\beta(x_1 + \upsilonup_1(x_2)) \\ \beta(x_1 + \upsilonup_1(x_2))\upsilonup_1'(x_2)
		\end{bmatrix}, \quad \xsym{\vartheta}_{N+4} \coloneq \begin{bmatrix}
			\beta(x_2 + \upsilonup_2(x_1))\upsilonup_2'(x_1)\\ \beta(x_2 + \upsilonup_2(x_1))
		\end{bmatrix},
	\end{equation*}
	which have the properties \eqref{equation:propcutoffvf1} and \eqref{equation:propcutoffvf2}. In particular, one can check that~$\xsym{\vartheta}_{N+3}$ and~$\xsym{\vartheta}_{N+4}$ have linearly independent averages by means of the calculation
	\begin{equation*}
		\begin{aligned}
			\int_{\mathbb{T}^2}\beta(x_i + \upsilonup_i(x_j))\partial_j\upsilonup_i(x_j) \, \xdx{\xvec{x}} & = - \int_{\mathbb{T}^2} \upsilonup_i(x_j)\upsilonup_i'(x_j) \beta'(x_i + \upsilonup_i(x_j)) \, \xdx{\xvec{x}}\\
			& =  - \int_{\mathbb{T}^2} \partial_i\left(\upsilonup_i(x_j)\upsilonup_i'(x_j) \beta(x_i + \upsilonup_i(x_j)) \right) \, \xdx{\xvec{x}} \\
			& = 0
		\end{aligned}
	\end{equation*}
	for $i, j \in \{1, 2\}, i \neq  j$. Now, we can choose the controls $\gamma_{N+3}$ and $\gamma_{N+4}$, depending on all the data of the controllability problem, by way of
	\begin{equation}\label{equation:gamma78}
		\gamma_{N+3}(t) = \partial_t A_1(t) - B_1(t), \quad \gamma_{N+4}(t) = \partial_t A_2(t) - B_2(t),
	\end{equation}
	where $A_1, A_2, B_1, B_2$ are chosen such that
	\begin{equation*}\label{equation:averagecoeff}
		\begin{gathered}
			\xsym{\aleph}_{\sigma}(t) = A_1(t) \int_{\mathbb{T}^2} \xsym{\vartheta}_{N+3}(\xvec{x}) \, \xdx{\xvec{x}} + A_2(t) \int_{\mathbb{T}^2} \xsym{\vartheta}_{N+4}(\xvec{x}) \, \xdx{\xvec{x}}, \\
			\int_{\mathbb{T}^2} \left(\widehat{\xsym{\xi}}(\xvec{x},t) + \xvec{f}(\xvec{x},t)\right) \, \xdx{\xvec{x}} = B_1(t) \int_{\mathbb{T}^2} \xsym{\vartheta}_{N+3}(\xvec{x}) \, \xdx{\xvec{x}} + B_2(t) \int_{\mathbb{T}^2} \xsym{\vartheta}_{N+4}(\xvec{x}) \, \xdx{\xvec{x}},
		\end{gathered}
	\end{equation*}
	with $\xsym{\aleph}_{\sigma}(t)$ as defined in \eqref{equation:Tsig} and $\widehat{\xsym{\xi}}$ from \eqref{equation:exprfc}.
	By inserting \eqref{equation:Tsig} and \eqref{equation:exprfc} into~\Cref{equation:gamma78}, and since $\gamma_1,\dots,\gamma_{N+2}$ are given in terms of $\gamma_1,\dots,\gamma_N$ as remarked below \eqref{equation:zetagam2}, one can also describe $\gamma_{N+3}$ and $\gamma_{N+4}$ through explicit formulas in terms of $\gamma_1,\dots,\gamma_N, \sigma$.
	\paragraph{Step 4. Conclusion.}
	Let $\xvec{u} \in \xCzero([0,T_{\operatorname{ctrl}}];\bVn{{k+1}})\cap\xLtwo((0,T_{\operatorname{ctrl}});\bVn{{k+2}})$ be the unique solution to \eqref{equation:NavierStokesVelocity} associated with the control force $\xsym{\xi} = \xsym{\xi}_{\gamma_1,\dots,\gamma_N,\sigma}$ of the form~\eqref{equation:IntroductionLocalizedControl2}. By the above constructions, the profiles $\xsym{\vartheta}_{1}, \dots, \xsym{\vartheta}_{N+4}$ appearing in the representation of~$\xsym{\xi}$ are universal, as they only depend on the choice of $\Omega$ (and the subset $\omegaup$) fixed in \Cref{section:introduction}. Further, we notice that, by integrating the velocity equation of the Navier--Stokes problem \eqref{equation:NavierStokesVelocity}, one has
	\[
		\int_{\mathbb{T}^{2}} \xvec{u}(\xvec{x},t) \, \xdx{\xvec{x}} = \int_{\mathbb{T}^{2}} \xvec{u}(\xvec{x},0) \, \xdx{\xvec{x}} + \int_0^t \left( \int_{\mathbb{T}^{2}} \xsym{\xi}(\xvec{x},s) \, \xdx{\xvec{x}} + \int_{\mathbb{T}^{2}} \xvec{f}(\xvec{x},s) \, \xdx{\xvec{x}} \right) \, \xdx{s}.
	\]
	As a result, taking into account \eqref{equation:gamma78} and the estimates \eqref{equation:velocityvorticityestimate} and \eqref{equation:prfmttc}, $\xvec{u}$ is seen to satisfy the terminal condition
	\[
		\|\xvec{u}(\cdot, T_{\operatorname{ctrl}}) - \xvec{u}_1\|_{k+1} \leq C_0	\|w(\cdot, T_{\operatorname{ctrl}}) - w_1\|_{k} < \varepsilon,
	\]
	which completes the proof of \Cref{theorem:main0_vel}.

	\begin{xmpl}\label{example:ControlWithStandardBasisAsGenerator}
		A key aspect of this article is the possibility to choose $N = 4$ in \Cref{theorem:main0_vel}. To this end, one could take
		$\mathcal{K} \coloneq \{ [1,0]^{\top}, [0,1]^{\top}\}$ and observe that
		\[
			\left\{s_{\xsym{\ell}}, c_{\xsym{\ell}}\right\}_{\xsym{\ell} \in \mathcal{K}} = \left\{\xvec{x}\mapsto\sin(x_1), \xvec{x}\mapsto\cos(x_1), \xvec{x}\mapsto\sin(x_2), \xvec{x}\mapsto\cos(x_2)\right\}.
		\]
	\end{xmpl}

	\paragraph{Acknowledgment.} The present work was initiated during a visit of VN to Shanghai Jiao Tong University (SJTU) in January 2022, where MR was a Ph.D. student at that time. The authors would like to extend their gratitude towards SJTU and Professors Ya-Guang Wang and Siran Li for their hospitality, as well as for providing an excellent research environment. We further would like to thank the anonymous referees for their constructive remarks.

	\addcontentsline{toc}{section}{References}
	
	\bibliographystyle{alpha}
	\bibliography{BibLLC}
	
	\appendix
	
	\pagebreak
	\section{Curl-free vector fields supported near smooth cuts}
	The following theorem is likely known. As we could not locate the desired statement in the literature, the details are here provided in form of explicit constructions. 
	\begin{thrm}\label{theorem:cutoffvectorfield}
		Let $\Omega \subset \mathbb{T}^2$ be a subdomain containing smooth cuts $\mathcal{C}_1, \mathcal{C}_2 \subset \Omega$ such that $\mathbb{T}^2\setminus (\mathcal{C}_1 \cup \mathcal{C}_2)$ is simply-connected. There exist $\xsym{\Lambda}, \xsym{\Sigma} \in \xCinfty(\mathbb{T}^2; \mathbb{R}^2)$ satisfying
		\begin{equation*}
			\mathbb{R}^2 = \operatorname{span}_{\mathbb{R}}\left\{\int_{\mathbb{T}^2} \xsym{\Lambda} \, \xdx{\xvec{x}}, \int_{\mathbb{T}^2} \xsym{\Sigma} \, \xdx{\xvec{x}}\right\}, \,\,  \xwcurl{\xsym{\Lambda}} = \xwcurl{\xsym{\Sigma}} = 0, \,\, \operatorname{supp}(\xsym{\Lambda}) \cup \operatorname{supp}(\xsym{\Sigma}) \subset \Omega.
		\end{equation*}
	\end{thrm}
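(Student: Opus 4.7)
I would build $\xsym{\Lambda}$ and $\xsym{\Sigma}$ as smooth curl-free vector fields on $\mathbb{T}^2$ supported in small tubular neighborhoods of $\mathcal{C}_1$ and $\mathcal{C}_2$ respectively, so that the associated closed $1$-forms are Poincar\'e-dual to the homology classes $[\mathcal{C}_1], [\mathcal{C}_2] \in H_1(\mathbb{T}^2;\mathbb{Z})$. By a standard surface-topology argument, the hypothesis that $\mathbb{T}^2 \setminus (\mathcal{C}_1 \cup \mathcal{C}_2)$ is simply-connected is equivalent to $\{[\mathcal{C}_1], [\mathcal{C}_2]\}$ being a $\mathbb{Z}$-basis of $H_1(\mathbb{T}^2;\mathbb{Z}) \cong \mathbb{Z}^2$. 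Since the two components of $\smallint_{\mathbb{T}^2} \xsym{V} \, \xdx{\xvec{x}}$ of a curl-free field $\xsym{V}$ are exactly $2\pi$ times the periods of the $1$-form $V_1 \, \xdx{x_1} + V_2 \, \xdx{x_2}$ around the two standard loops $\Gamma_1, \Gamma_2$ (horizontal and vertical), this basis property will translate directly into linear independence of the two averages.

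For the construction, by the tubular neighborhood theorem applied to each smooth embedded circle $\mathcal{C}_i \subset \Omega$, one obtains an open neighborhood $N_i \subset \Omega$ together with a smooth normal coordinate $s_i \colon N_i \longrightarrow (-\varepsilon_i, \varepsilon_i)$ satisfying $s_i^{-1}(0) = \mathcal{C}_i$ and $\xnab s_i \neq \xvec{0}$ throughout $N_i$; shrinking $\varepsilon_i$ guarantees $\overline{N_i} \subset \Omega$. Fix non-negative cutoffs $\beta_i \in \xCinfty_0((-\varepsilon_i/2, \varepsilon_i/2); \mathbb{R})$ with $\int_{\mathbb{R}} \beta_i(s) \, \xdx{s} = 1$, and then set
\[
	\xsym{\Lambda}(\xvec{x}) \coloneq \begin{cases} \beta_1(s_1(\xvec{x}))\, \xnab s_1(\xvec{x}) & \text{if } \xvec{x} \in N_1,\\ \xvec{0} & \text{otherwise,} \end{cases}
\]
and analogously $\xsym{\Sigma}$ built from $\mathcal{C}_2, s_2, \beta_2$. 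Both vector fields are smooth on $\mathbb{T}^2$ (extended smoothly by zero, since $\beta_i$ vanishes near $\partial N_i$), supported in $\Omega$, and curl-free because inside $N_i$ one can locally write $\xsym{\Lambda} = \xnab B_1(s_1)$ with $B_1' = \beta_1$.

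To verify linear independence of the averages, observe that the $1$-form associated with $\xsym{\Lambda}$ takes the explicit shape $\omega_{\xsym{\Lambda}} = \beta_1(s_1) \, \xdx{s_1}$ on $N_1$. By Fubini and closedness of $\omega_{\xsym{\Lambda}}$, one has
\[
	\int_{\mathbb{T}^2} \Lambda_j(\xvec{x}) \, \xdx{\xvec{x}} = 2\pi \int_{\Gamma_j} \omega_{\xsym{\Lambda}}, \quad j \in \{1, 2\},
\]
because the inner $\xdx{x_j}$-integral equals the period of $\omega_{\xsym{\Lambda}}$ around a loop whose homology class is independent of the transverse variable. A direct transverse-slice computation inside $N_1$, parameterising $\Gamma_j \cap N_1$ by $s_1$ and changing variables, shows that each transverse intersection of $\Gamma_j$ with $\mathcal{C}_1$ contributes $\pm \int_{\mathbb{R}} \beta_1 = \pm 1$ to the period, so $\int_{\Gamma_j} \omega_{\xsym{\Lambda}} = [\Gamma_j] \cdot [\mathcal{C}_1]$ (signed intersection number), and analogously for $\omega_{\xsym{\Sigma}}$. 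Consequently, the $2 \times 2$ matrix of averages is proportional to $([\Gamma_j] \cdot [\mathcal{C}_i])_{i,j \in \{1,2\}}$, which has determinant $\pm 1$ by unimodularity of the intersection pairing on $H_1(\mathbb{T}^2;\mathbb{Z})$ combined with the basis property of $\{[\mathcal{C}_1], [\mathcal{C}_2]\}$.

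The main obstacle is the identification of periods with signed intersection numbers, which is a manifestation of Poincar\'e duality realised via Thom forms. However, since the $1$-form $\omega_{\xsym{\Lambda}} = \beta_1(s_1)\, \xdx{s_1}$ is so explicit, this identification reduces to a one-variable change-of-variables computation rather than an invocation of abstract duality machinery. This mirrors the explicit spirit of formula \eqref{equation:cutoffvfgof} in the main text, which is precisely the specialisation of the construction above to the case when $\mathcal{C}_1, \mathcal{C}_2$ are graphs of smooth functions over the two coordinate axes (with $s_1(\xvec{x}) = x_1 + \upsilonup_1(x_2)$ and $s_2(\xvec{x}) = x_2 + \upsilonup_2(x_1)$).
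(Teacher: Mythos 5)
Your proof is correct and takes a genuinely different, more conceptual route to verifying the linear independence of the averages. The construction itself is essentially the same as the paper's (you correctly observe that the formulas in \eqref{equation:cutoffvfgof}, and more generally the piecewise-glued fields in the appendix, are the coordinate incarnation of $\beta(s_i)\,\xnab s_i$), but the verification step diverges. The paper localizes to a strip $\mathbb{T}^1\times(0,r_1)$ where $\mathcal{C}_1$ is straightened, computes $\int_{\xU}\Lambda_1>0$, $\int_{\xU}\Lambda_2=0$ via Fubini, and then invokes a small perturbation of $\mathcal{C}_1$ to render the averages linearly independent ``if necessary.'' You instead identify $\smallint_{\mathbb{T}^2}\xsym{\Lambda}$ with $2\pi$ times the periods of the closed $1$-form $\beta_1(s_1)\,\xdx{s_1}$, and these periods with the signed intersection numbers $[\Gamma_j]\cdot[\mathcal{C}_1]$, so that the $2\times 2$ matrix of averages is $2\pi$ times the intersection matrix of two bases of $H_1(\mathbb{T}^2;\mathbb{Z})$ and therefore has determinant $\pm(2\pi)^2$. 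This is both shorter and sharper: it shows the averages are \emph{automatically} linearly independent with no perturbation needed, and it explains \emph{why} — they are topological invariants of the homology classes $[\mathcal{C}_i]$, so in fact a small isotopy of $\mathcal{C}_1$ (as in the paper's Step 2) cannot change them at all; the paper's closing perturbation remark is therefore vacuous at best, and your argument quietly repairs this. One small imprecision on your side: ``$\mathbb{T}^2\setminus(\mathcal{C}_1\cup\mathcal{C}_2)$ simply connected'' is not \emph{equivalent} to $\{[\mathcal{C}_1],[\mathcal{C}_2]\}$ being a $\mathbb{Z}$-basis — two curves can form a basis yet intersect geometrically more than once, leaving a disconnected complement — but the implication you actually use (simply connected $\Rightarrow$ basis, via an Euler characteristic count forcing a single transverse intersection) holds, so the proof is unaffected.
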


	\begin{figure}[ht!]
		\centering
		\resizebox{1\textwidth}{!}{
			\begin{subfigure}[b]{0.5\textwidth}
				\centering
				\resizebox{1\textwidth}{!}{
					\begin{tikzpicture}
						\clip(-0.1,-0.1) rectangle (9.1,9.1);
						
						\draw[line width=0.5mm, color=black] plot [smooth] coordinates {(1,0) (1,0.5) (1, 1) (1.5, 1.5) (2, 2) (2.5, 2.2) (3,3) (3.2,2.8) (3.5, 2.5) (3.7, 2.3) (4, 2) (3.5, 1.5) (3.7, 1.3) (4, 1) (4.2, 0.8)  (4.5, 0.5) (5,1) (6,3) (6.5,3.5) (7,4) (8,5) (7.5,5.5) (7,6) (6.5,6.5) (5,8) (2.5,4) (2,4.5) (1.5,5) (1,6) (1,7) (1,9)};
						
						\draw[line width=1.5mm, color=FireBrick!60] (1.5,1.5) -- (2,2);
						\draw[line width=1.5mm, color=FireBrick!60] (3.2,2.8) -- (3.7, 2.3);
						\draw[line width=1.5mm, color=FireBrick!60] (3.7, 1.3) -- (4.2, 0.8);
						\draw[line width=1.5mm, color=FireBrick!60] (6.5,3.5) -- (7,4);
						\draw[line width=1.5mm, color=FireBrick!60] (7,6) -- (6.5,6.5);
						\draw[line width=1.5mm, color=FireBrick!60] (2,4.5) -- (1.5,5);

						\draw[line width=0.2mm, color=black, dashed, dash pattern=on 8pt off 8pt] plot[smooth cycle] (0,0) rectangle ++(9,9);
						
					\end{tikzpicture}
				}
				\label{Figure:controldomain3}
			\end{subfigure}
			\hfill \quad \hfill
			\begin{subfigure}[b]{0.5\textwidth}
				\centering
				\resizebox{1\textwidth}{!}{
					\begin{tikzpicture}
						\clip(-0.1,-0.1) rectangle (9.1,9.1);
						
						\draw[line width=0.5mm, color=black] plot [smooth] coordinates {(1,0) (1,0.5) (1, 1) (1.5, 1.5) (2, 2) (2.5, 2.2) (3,3) (3.2,2.8) (3.5, 2.5) (3.7, 2.3) (4, 2) (3.5, 1.5) (3.7, 1.3) (4, 1) (4.2, 0.8)  (4.5, 0.5) (5,1) (6,3) (6.5,3.5) (7,4) (8,5) (7.5,5.5) (7,6) (6.5,6.5) (5,8) (2.5,4) (2,4.5) (1.5,5) (1,6) (1,7) (1,9)};
						
						\draw[line width=1.5mm, color=FireBrick!60] (1.5,1.5) -- (2,2);
						\draw[line width=1.5mm, color=FireBrick!60] (3.2,2.8) -- (3.7, 2.3);
						\draw[line width=1.5mm, color=FireBrick!60] (3.7, 1.3) -- (4.2, 0.8);
						\draw[line width=1.5mm, color=FireBrick!60] (6.5,3.5) -- (7,4);
						\draw[line width=1.5mm, color=FireBrick!60] (7,6) -- (6.5,6.5);
						\draw[line width=1.5mm, color=FireBrick!60] (2,4.5) -- (1.5,5);
						
						\draw[line width=0.6mm, color=DarkGray!140, dashed] (1.5,1.95) -- (1.86,1.59);
						\draw[line width=0.6mm, color=DarkGray!140, dashed] (3.28,2.38) -- (3.7, 2.8);
						\draw[line width=0.6mm, color=DarkGray!140, dashed] (3.73, 0.83) -- (4.1, 1.2);
						\draw[line width=0.6mm, color=DarkGray!140, dashed] (6.5,4) -- (7,3.5);
						\draw[line width=0.6mm, color=DarkGray!140, dashed] (6.58,6.08) -- (6.95,6.45);
						\draw[line width=0.6mm, color=DarkGray!140, dashed] (1.58,4.58) -- (1.96,4.96);
						
						\draw[line width=0.6mm, color=DarkGray!140, dashed] plot [smooth] coordinates {(0.8,0) (0.8,0.5) (0.8, 1) (1.2, 1.6) (2, 2.5) (2.5, 2.7) (3, 3.4) (4.2,2.1) (3.7,1.55) (4,1.3) (4.5,0.92) (5,1.6) (5.5,3) (6.1,3.5) (6.5,4) (7.5,5) (7,5.5) (6.65,6) (6.2,6.4) (5,7.5) (2.5,3.5) (1.8,4.4) (1.1,5) (0.8,6) (0.8,7) (0.8,9)};
						
						\draw[line width=0.6mm, color=DarkGray!140, dashed] plot [smooth] coordinates {(1.2,0) (1.2,0.5) (1.2, 0.7) (1.4, 1.2) (2, 1.7) (2.5, 1.9) (3, 2.5) (3.75, 1.95) (3.2, 1.5) (3.6, 1) (4.4, 0.2) (5,0.6) (6.4,2.8) (6.98,3.5) (7.5,4) (8.4,5) (7.5,5.9) (7,6.4) (6.5,6.9) (5,8.4) (2.5,4.7) (2,4.9) (1.4,5.6) (1.2,6.4) (1.2,7) (1.2,9)};
						
						\draw[line width=0.2mm, color=black, dashed, dash pattern=on 8pt off 8pt]	plot[smooth cycle] (0,0) rectangle ++(9,9);
						
						\coordinate[label=right:{\color{black}\large$\xS_1$}] (B) at (1.25,0.3);
						\coordinate[label=right:{\color{black}\large$\xS_1$}] (B) at (1.25,8.7);
						\coordinate[label=right:{\color{black}\large$\xS_2$}] (B) at (3.95,8.22);
						\coordinate[label=right:{\color{black}\large$\xS_3$}] (B) at (6.65,5);
						\coordinate[label=right:{\color{black}\large$\xS_4$}] (B) at (6,2.15);
						\coordinate[label=right:{\color{black}\large$\xS_5$}] (B) at (3.8,2.7);
						\coordinate[label=right:{\color{black}\large$\xS_6$}] (B) at (1.7,2.9);
					\end{tikzpicture}
				}
				\label{Figure:controldomain4}
			\end{subfigure}
		}
		\caption{An illustration of the cut $\mathcal{C}_1$. The displayed tubular neighborhood of $\mathcal{C}_1$ comprises six sections $\xS_1, \dots, \xS_6$ such that $\mathcal{C}_1$ is for $k \in \{1,2,3\}$ a graph over the $x_1$-axis in $\xS_{2k}$ and a graph over the $x_2$-axis in $\xS_{2k-1}$. Near the internal section boundaries, the curve has the slope $1$ or $-1$.}
		\label{Figure:twocontroldomains2}
	\end{figure}

	\begin{proof}
		Since $\Omega$ is open, we may re-choose the smooth cuts $(\mathcal{C}_i)_{i \in \{1,2\}}$ in a way that $\mathbb{T}^2\setminus (\mathcal{C}_1 \cup \mathcal{C}_2)$ is simply-connected and the following properties hold.
		\begin{itemize}
			\item The cut $\mathcal{C}_1$ equals the straight line $x_1 = c_1$ in $\mathbb{T}\times ([0,r_1]\setminus(r_1/3,2r_1/3))$, for some constant $c_1 \in \mathbb{T}$ and small $r_1 > 0$. Similarly, the cut $\mathcal{C}_2$ equals the line $x_2 = c_2$ in $([0,r_2]\setminus(r_2/3,2r_2/3))\times\mathbb{T}$ for some $c_2 \in \mathbb{T}$ and small $r_2 > 0$.
			\item There exists a tubular neighborhood $\xN(\mathcal{C}_i) = \bigcup_{k=1}^{l_i}\xS_{k}(\mathcal{C}_i)$ of $\mathcal{C}_i$ with disjoint sections $\xS_1(\mathcal{C}_i)$, \dots $\xS_{l_i}(\mathcal{C}_i)$, in a way that $\mathcal{C}_i$ is in each $\xS_{2k+1-i}(\mathcal{C}_i)$ a graph over the $x_1$-axis and in $\xS_{2k-2+i}(\mathcal{C}_i)$ a graph over the $x_2$-axis.
			\item The intersection $\xR_{l,k}(\mathcal{C}_i) \coloneq \partial \xS_l(\mathcal{C}_i) \cap \partial \xS_k(\mathcal{C}_i)$ of two adjacent sections $\xS_l(\mathcal{C}_i)$ and $\xS_{k}(\mathcal{C}_i)$ is a single line segment with slope either $1$ or $-1$. In the vicinity of the square with diagonal $\xR_{l,k}(\mathcal{C}_i)$, the curve $\mathcal{C}_i$ equals a straight line segment $\mathcal{L}_{l,k}(\mathcal{C}_i) \subset \mathcal{C}_i$, with slope either $-1$ or $1$, such that the line $\xR_{l,k}(\mathcal{C}_i)$ passes transversely through $\mathcal{L}_{l,k}(\mathcal{C}_i)$.
		\end{itemize}
		For the sake of a concise presentation, we assume that $\mathcal{C}_1$ is the curve displayed in \Cref{Figure:twocontroldomains2} and only construct the vector field $\xsym{\Lambda}$. The vector field $\xsym{\Sigma}$ can be built in the same manner. In particular, that $\xsym{\Lambda}$ and $\xsym{\Sigma}$ have linearly independent averages will turn out as a generic property. Our treatment of the example in \Cref{Figure:twocontroldomains2} provides all the building blocks required for considering any general region $\Omega$ meeting the hypotheses of \Cref{theorem:cutoffvectorfield}. 
		
		\paragraph{Step 1. Constructions.} Since we consider here only the curve $\mathcal{C}_1$, let us fix for each $k \in \{1,\dots, l_1 = 6\}$ the names
		\[
			\xN = \xN(\mathcal{C}_1), \quad \xS_k = \xS_k(\mathcal{C}_1).
		\]
		Then, by further reducing the diameter of the tube $\xN$ if necessary, we select six smooth functions $\widetilde{\upsilonup}_k, \widehat{\upsilonup}_k, \colon \mathbb{T} \longrightarrow \mathbb{R}_-$, $k \in \{1,2,3\}$, having the below listed attributes.
		\begin{itemize}
			\item In a neighborhood $\xN_{2k-1}$ of the section $\xS_{2k-1}$, satisfying $\overline{\xS_{2k-1}} \subset \xN_{2k-1}$, it holds that $x_1 + \widetilde{\upsilonup}_{k}(x_2) = 0$ if and only if $(x_1,x_2) \in \mathcal{C}_1$. When $(x_1,x_2) \in \xN_{2k-1}$ is located right to $\mathcal{C}_1$, then $x_1 + \widetilde{\upsilonup}_k(x_2) > 0$. If $(x_1,x_2) \in \xN_{2k-1}$ is located left to $\mathcal{C}_1$, then $x_1 + \widetilde{\upsilonup}_k(x_2) < 0$.
			\item In a neighborhood $\xN_{2k}$ of section $\xS_{2k}$, satisfying $\overline{S}_{2k} \subset \xN_{2k}$, it holds that $x_2 + \widehat{\upsilonup}_k(x_1) = 0$ if and only if $(x_1,x_2) \in \mathcal{C}_1$. When $(x_1,x_2) \in \xN_{2k}$ is located above $\mathcal{C}_1$, then $x_2 + \widehat{\upsilonup}_k(x_1) > 0$. If $(x_1,x_2) \in \xN_{2k}$ is located below $\mathcal{C}_1$, then $x_2 + \widehat{\upsilonup}_k(x_1) < 0$.
		\end{itemize}
		Furthermore, for a sufficiently small number $l \in (0, \operatorname{dist}(\mathcal{C}_1, \partial \Omega))$, which will be determined later, we choose a cutoff $\beta \in \xCinfty(\mathbb{T};\mathbb{R}_+)$ obeying
		\begin{equation}\label{equation:appcutoffprop}
			\operatorname{supp}(\beta) \subset (-l/2, l/2), \quad  \beta(0) > 0.
		\end{equation}
		Let us introduce the four main building blocks. Namely, for any general function~$\upsilonup\colon \mathbb{T} \longrightarrow \mathbb{R}_-$ and $\xvec{x} = [x_1,x_2]^{\top} \in \mathbb{T}^2$, we define
		\[
			\widetilde{\xsym{\Lambda}}^{\upsilonup,\pm}(\xvec{x}) \coloneq
			\begin{bmatrix}
				\pm\beta(\pm x_1 \pm \upsilonup(x_2)) \\ \pm\beta(\pm x_1 \pm \upsilonup(x_2)) \upsilonup'(x_2)
			\end{bmatrix}, \,\, \widehat{\xsym{\Lambda}}^{\upsilonup,\pm}(\xvec{x}) \coloneq
			\begin{bmatrix}
			\pm\beta(\pm x_2 \pm \upsilonup(x_1)) \upsilonup'(x_1) \\ \pm\beta(\pm x_2 \pm \upsilonup(x_1))
			\end{bmatrix}.
		\]
		Then, we build $\xsym{\Lambda}$ by gluing the previously introduced functions in a suitable way, resulting in the explicit formula
		\begin{equation*}\label{equation:lambdageneral}
			\xsym{\Lambda}(x_1,x_2) \coloneq \begin{cases}
				\widetilde{\xsym{\Lambda}}_k(x_1,x_2) & \mbox{ if } (x_1,x_2) \in \xS_{2k-1},\\
				\widehat{\xsym{\Lambda}}_k(x_1,x_2) & \mbox{ if } (x_1,x_2) \in \xS_{2k},\\
				\xsym{0} & \mbox{ otherwise,}
			\end{cases}
		\end{equation*}
		where the smooth vector fields
		\[
			(\widetilde{\xsym{\Lambda}}_k \colon \xN_{2k-1} \longrightarrow \mathbb{T}^2)_{k\in\{1,2,3\}}, \quad (\widehat{\xsym{\Lambda}}_k \colon \xN_{2k} \longrightarrow \mathbb{T}^2)_{k\in\{1,2,3\}}
		\]
		are given by
		\begin{align*}
				\widetilde{\xsym{\Lambda}}_1(\xvec{x}) & = \widetilde{\xsym{\Lambda}}^{\widetilde{\upsilonup}_1,+}, && \widetilde{\xsym{\Lambda}}_2(\xvec{x})  = \widetilde{\xsym{\Lambda}}^{\widetilde{\upsilonup}_2,+}, && \widetilde{\xsym{\Lambda}}_3(\xvec{x})  = \widetilde{\xsym{\Lambda}}^{\widetilde{\upsilonup}_3,-},\\
				\widehat{\xsym{\Lambda}}_1(\xvec{x}) & = \widehat{\xsym{\Lambda}}^{\widehat{\upsilonup}_1,+}, && \widehat{\xsym{\Lambda}}_2(\xvec{x})  = \widehat{\xsym{\Lambda}}^{\widehat{\upsilonup}_2,-}, && \widehat{\xsym{\Lambda}}_3(\xvec{x})  = \widehat{\xsym{\Lambda}}^{\widehat{\upsilonup}_3,-}.
		\end{align*}
		The small parameter $l > 0$ is fixed in a way that $\widetilde{\xsym{\Lambda}}_k$ and $\widehat{\xsym{\Lambda}}_k$ are for each $k \in \{1,2,3\}$ supported in a neighborhood of $\mathcal{C}_1$ which is sufficiently thin to ensure that $\xsym{\Lambda}$ is a well-defined, smooth, and curl-free vector field obeying  $\operatorname{supp}(\xsym{\Lambda}) \subset \Omega$.
		\paragraph{Step 2. Checking the average.} It remains to study the average of $\xsym{\Lambda} = [\Lambda_1, \Lambda_2]^{\top}$. To this end, we write $\xU \coloneq \mathbb{T}\times(0, r_1)$ and decompose
		\[
				\int_{\mathbb{T}^2} \xsym{\Lambda}(\xvec{x}) \, \xdx{\xvec{x}} = 	\int_{\xU} \xsym{\Lambda}(\xvec{x}) \, \xdx{\xvec{x}} + \int_{\mathbb{T}^2 \setminus \xU} \xsym{\Lambda}(\xvec{x}) \, \xdx{\xvec{x}}.
		\]
		Due to \eqref{equation:appcutoffprop} and Fubini's theorem, or alternatively by virtue of the Divergence Theorem, we arrive at
		\begin{equation*}
			\begin{aligned}
				\int_{\xU} \Lambda_1(\xvec{x}) \, \xdx{\xvec{x}} > 0, \quad \int_{\xU} \Lambda_2(\xvec{x}) \, \xdx{\xvec{x}} = 0.
			\end{aligned}
		\end{equation*}
		Therefore, by slightly perturbing $\mathcal{C}_1$ in $\mathbb{T} \times (r_1/3,2r_1/3)$, one can change the value of $\smallint_{\mathbb{T}^2} \Lambda_1(\xvec{x}) \, \xdx{\xvec{x}}$ without affecting $\smallint_{\mathbb{T}^2} \Lambda_2(\xvec{x}) \, \xdx{\xvec{x}}$. The vector field $\xsym{\Sigma} = [\Sigma_1, \Sigma_2]^{\top}$ is then obtained by analogous constructions, but now along the smooth cut $\mathcal{C}_2$. Therefore, one can modify $\smallint_{\mathbb{T}^2} \Sigma_2(\xvec{x}) \, \xdx{\xvec{x}}$ in $(r_2/3, 2r_2/3) \times \mathbb{T}$ without impacting the value of the integral $\smallint_{\mathbb{T}^2} \Sigma_1(\xvec{x}) \, \xdx{\xvec{x}}$. In conclusion, we can first construct candidates for $\xsym{\Lambda}$ and $\xsym{\Sigma}$, followed by performing slight perturbations, if necessary, so that their averages are rendered linearly independent. 
	\end{proof}

\end{document}